\newtheorem{theorem}{Theorem}[section]
\newtheorem{prop}[theorem]{Proposition}
\newtheorem{lemma}[theorem]{Lemma}
\newtheorem{cor}[theorem]{Corollary}
\theoremstyle{definition}
\newtheorem{definition}{Definition}[section]
\theoremstyle{remark}
\newtheorem{remark}{Remark}[section]
\numberwithin{equation}{section}
\def \H {\mathcal{H}} 
\def \N {\mathbb{N}}  
\def \R {\mathbb{R}}  
\def\RN {\mathbb{R}^{N}} 
\def \H {H_0^1(\Omega)}
\def \HLinfty {H_0^1(\Omega) \cap L^{\infty}(\Omega)}
\def \Linfty {L^{\infty}(\Omega)}
\def \gradu {\nabla u}
\def \gradv {\nabla v}
\renewcommand{\div}{\operatorname{div}}
\renewcommand{\epsilon} {\varepsilon}
\begin{document}

\title[Existence and multiplicity for an elliptic problem with  sign-changing coefficients]{Existence and multiplicity for an elliptic problem with critical growth in the gradient and sign-changing coefficients}


\author[Colette De Coster and Antonio J. Fern\'andez]{}
\address{}
\curraddr{}
\email{}
\thanks{}



\subjclass[2010]{ 35J20, 35J25, 35J62}

\keywords{critical growth in the gradient, sign-changing coefficients, indefinite superlinear problem, variational methods, lower and upper solutions}

\date{}

\dedicatory{}

\maketitle

\centerline{\scshape Colette De Coster}
\smallskip
{\footnotesize
 \centerline{Univ. Polytechnique Hauts-de-France, EA 4015 - LAMAV - FR CNRS 2956, F-59313 Valenciennes, France}
\vspace{0.1cm}
\centerline{\textit{E-Mail address} : \texttt{colette.decoster@uphf.fr}}
}

\bigskip

\centerline{\scshape Antonio J. Fern\'andez}
\smallskip
{\footnotesize
 \centerline{Univ. Polytechnique Hauts-de-France, EA 4015 - LAMAV - FR CNRS 2956, F-59313 Valenciennes, France}
 \smallbreak
   \centerline{Laboratoire de Math\'ematiques (UMR 6623), Universit\'e de Bourgogne Franche-Comt\'e,}
 \centerline{16 route de Gray, 25030 Besan\c con Cedex, France}
  \vspace{0.1cm}
\centerline{\textit{E-Mail address} : \texttt{antonio\_jesus.fernandez\_sanchez@univ-fcomte.fr}}
} 

\medbreak

\begin{center}\rule{1\textwidth}{0.1mm} \end{center}
\vspace{-0.25cm}
\begin{abstract}
Let $\Omega \subset \RN$, $N \geq 2$, be a smooth bounded domain. We consider the boundary value problem
\[ \label{Plambda-Abstract-ch3} \tag{$P_{\lambda}$} -\Delta u = c_{\lambda}(x) u + \mu |\gradu|^2 + h(x)\,, \quad u \in \HLinfty\,,\]
where $c_{\lambda}$ and $h$ belong to $L^q(\Omega)$ for some $q > N/2$, $\mu$ belongs to $\R \setminus \{0\}$ and we write $c_{\lambda}$ under the form $c_{\lambda}:= \lambda c_{+} - c_{-}$ with $c_{+} \gneqq 0$, $c_{-} \geq 0$, $c_{+} c_{-} \equiv 0$ and $\lambda \in \R$. Here $c_{\lambda}$ and $h$ are both allowed to change sign. As a first main result we give a necessary and sufficient condition which guarantees the existence of a unique solution to \eqref{Plambda-Abstract-ch3} when $\lambda \leq 0$. Then, assuming that $(P_0)$ has a solution, we prove existence and multiplicity results for $\lambda > 0$. Our proofs rely on a suitable change of variable of type $v = F(u)$ and the combination of variational methods with lower and upper solution techniques.
\end{abstract}
\begin{center} \rule{1 \textwidth}{0.1mm} \end{center}

\maketitle

\section{Introduction and main results} \label{I-ch3}

In this paper we study the existence and multiplicity of solutions to boundary value problems of the form
\begin{equation} \tag{$P$} \label{P-ch3}
\left\{
\begin{aligned}
-\Delta u & = c(x) u + \mu(x)|\nabla u|^2+h(x), & \quad \textup{ in } \Omega,\\
u & = 0, & \quad \textup { on } \partial \Omega,
\end{aligned}
\right.
\end{equation}
where $\Omega \subset \RN$, $N \geq 2$, is a bounded domain with smooth boundary, $c$ and $h$ belong to $L^q(\Omega)$ for some $q > N/2$, $\mu$ belongs to $\Linfty$ and the solutions are searched in $\HLinfty$.

\medbreak
There exist several mathematical reasons that make the study of nonlinear elliptic PDEs with quadratic growth in the gradient interesting. For instance, J.L. Kazdan and R.J. Kramer observed in 1978 that second order PDEs with quadratic growth in the gradient are invariant under changes of variable of type $v = F(u)$. This took them to claim in \cite[page 619]{K_K_1978} that \textit{``In the long run, the class of semilinear equations should be less important than some more general class of equations that is invariant under changes of variables"}. From a pure mathematical point of view, it is worth noting that, in Riemannian geometry, this type of equations naturally appears in the study of gradient Ricci solitons, see for instance  \cite[Section 1]{M_M_R_2018}. We also mention that problem \eqref{P-ch3} with $c \equiv 0$ corresponds to the stationary case of the Kardar-Parisi-Zhang model of growing interfaces introduced in \cite{K_P_Z_1986}. 

\medbreak
The study of nonlinear elliptic PDEs with a gradient dependence up to the critical growth was essentially initiated by L. Boccardo, F. Murat and J.-P. Puel in the 80's \cite{B_M_P_1983, B_M_P_1988, B_M_P_1992}. This type of problems have generated since then a large literature. In addition to several works strictly related to the content of this paper (that we will detail next), several directions have been investigated. For instance, let us mention here some recent works concerning subcritical growth in the gradient \cite{G_M_P_2006, G_M_P_2014, P_2010}, supercritical growth in the gradient \cite{P_2014}, low regularity coefficients and regularizing effects \cite{A_B_2015, A_B_2018} and pointwise estimates via symmetrization methods \cite{H_R_2017}. 

\medbreak
Now, we focus precisely on the existing literature concerning existence and multiplicity of solutions to problem \eqref{P-ch3}.

\medbreak
In the case where $c(x) \leq \alpha_0 < 0$ a.e. in $\Omega$ 
for some $\alpha_0 < 0$, now referred to as the \textit{coercive case}, the existence of a solution to \eqref{P-ch3} 
is a particular case of the results of  \cite{B_M_P_1983, B_M_P_1988, B_M_P_1992} and its uniqueness follows from 
\cite{B_B_G_K_1999, B_M_1995}. The \textit{weakly coercive case} $c \equiv 0$ was first studied in 
\cite{F_M_2000} where, for $\|\mu h \|_{N/2}$ small enough, the authors proved the existence of a solution to 
\eqref{P-ch3}. For $\mu(x) \equiv \mu > 0$ constant and $h \gneqq 0$, these results were then improved in 
\cite{A_DA_P_2006}. Finally, in the recent work \cite{DC_F_2018} we completely characterized the existence of solutions to \eqref{P-ch3} in the \textit{weakly coercive case} $c \equiv 0$. The \textit{limit coercive case} where one only requires $c(x) \leq 0$ a.e. in $\Omega$ (i.e. allowing parts of the domain where $c \equiv 0$ and parts of it where $c < 0$) proved to be more complex to treat. In \cite{A_DC_J_T_2015}, the authors observed that the existence of a solution to \eqref{P-ch3} is not guaranteed and gave sufficient conditions to ensure such existence. In case $h$ does not have a sign, weaker sufficient conditions can be found in \cite{DC_F_2018}. The fact that the uniqueness also holds in the \textit{limit coercive case} $c \leq 0$ was proved in \cite{A_DC_J_T_2015}. We refer likewise to \cite{A_DC_J_T_2014} for more general uniqueness results in this framework. Finally, let us point out that, except for \cite{A_DA_P_2006}, all these results were obtained without requiring any sign conditions on $\mu$ and $h$.

\medbreak
If $c(x) \not \leq 0$ a.e. in $\Omega$, i.e. $c \gneqq 0$ or $c$ changes sign, problem \eqref{P-ch3} behaves 
very differently and becomes much more richer than for $c \leq 0$. The first paper which addressed this situation 
was \cite{J_S_2013}. Following \cite{S_2010}, which considered a particular case, 
the authors studied \eqref{P-ch3} with $c \gneqq 0$ and $\mu(x) \equiv \mu > 0$ constant. 
For $\|c\|_q$ and $\|\mu h\|_{N/2}$ small enough the existence of two solutions to \eqref{P-ch3} was obtained. 
This result has now been complemented and improved in several ways. The restriction $\mu$ constant was first 
removed in \cite{A_DC_J_T_2015}. In that paper the authors imposed on $c$ a dependence on a real parameter 
$\lambda$ and considered $\lambda c \gneqq 0$. For $\mu(x) \geq \mu_1 > 0$ a.e. in $\Omega$ and 
$h \gneqq 0$, they proved the existence of at least two solutions for $\lambda > 0$ small enough. 
In this direction we refer also to \cite{DC_J_2017} where, imposing stronger regularity on $c$ and $h$, the authors 
removed the condition $h \gneqq 0$. Under different sets of assumptions, the authors clarified the structure of the 
set of solutions to \eqref{P-ch3} for $\lambda c \gneqq 0$. Note that in \cite{DC_F_2018} the above results were 
extended to the more general $p$-Laplacian case at the expense of considering $\mu$ constant. 
Also, in the frame of viscosity solutions and fully nonlinear equations, similar conclusions have been obtained 
in \cite{SN_S_2018} under corresponding assumptions. All the above mentioned results require either $\mu$ to 
be constant or to be uniformly bounded from below by a positive constant (or similarly bounded from above by a 
negative constant). In \cite{Soup_2015}, assuming that $\lambda c,\, \mu$ and $h$ were non-negative, a first 
attempt to remove this restriction was presented. Under suitable assumptions on the support of the coefficient 
functions and for $N \leq 5$, the existence of at least two solutions for $\lambda > 0$ small enough was obtained. 
Finally, let us point out that the only papers dealing with $c$ which may change sign are 
\cite{DC_F_J_2018-A2, J_RQ_2016}. In \cite{J_RQ_2016}, the authors dealt with $\mu(x) \equiv \mu > 0$ constant 
and $h \gneqq 0$ and they proved the existence of two solutions to \eqref{P-ch3} for  $\|c^{+}\|_q$ and 
$\|\mu h\|_{N/2}$ small enough. The restrictions $\mu > 0$ constant and $h \gneqq 0$ were removed in 
\cite{DC_F_J_2018-A2} at the expense of considering a ``thick zero set'' condition on the support of $c$ and 
suitable assumptions on $\mu$. Let us stress that \cite{DC_F_J_2018-A2} is the unique paper dealing with the 
\textit{non-coercive case} $c \not \leq 0$  where $\mu$ may change sign. 

\medbreak
In this paper we pursue the study of \eqref{P-ch3} and consider several situations where $c$ and $h$ may change sign. At the expense of considering $\mu$ constant we remove the ``thick zero set'' condition on $c$ considered in \cite{DC_F_J_2018-A2}. Moreover, we extend in several directions the previously known results and clarify the structure of the set of solutions in the case  $c^{+} \not \equiv 0$.  In order to state our main results, let us introduce the following order notions.

\begin{definition} For $h_1$, $h_2\in L^1(\Omega)$ we write
\begin{itemize}
\item $h_1\leq h_2$ if $h_1(x)\leq h_2(x)$ for a.e. $x\in\Omega$,
\item $h_1\lneqq h_2$ if $h_1\leq h_2$ and 
$\mbox{meas}(\{x\in\Omega:
h_1(x)<h_2(x)\})>0$.
\end{itemize}

\noindent
For $u$, $v \in \mathcal{C}^{1}(\overline{\Omega})$ we write
\begin{itemize}
\item $u < v$ if, for all $x \in \Omega\,,$ $u(x) < v(x)$,
\item $u \ll v$ if $u < v$  and, for all $x \in \partial \Omega$, either $u(x) < v(x)$, or, $u(x) = v(x)$ and $\frac{\partial u}{\partial \nu}(x) > \frac{\partial v}{\partial \nu}(x)$, where $\nu$ denotes the exterior unit 
normal. 
\end{itemize}
\end{definition}

As a first main result, we completely characterize the\textit{ limit coercive case}. Let us consider the boundary value problem 
\begin{equation} \label{Pcoercive-ch3}
-\Delta u = -d(x)u + \mu |\gradu|^2 + h(x), \quad u \in \HLinfty,
\end{equation}
under the assumption
\begin{equation} \label{hypCoercive-ch3}
\left\{
\begin{aligned}
& \Omega \subset \RN,\ N \geq 2, \textup{ is a bounded domain with } \partial \Omega \textup{ of class } \mathcal{C}^{0,1},\\
& d \textup{ and } h \textup{ belong to } L^q(\Omega) \textup{ for some } q > N/2,\\
& \mu > 0 \textup{ and } d \geq 0,
\end{aligned}
\right.
\end{equation}
and define 
\begin{equation}\label{md-intro-ch3}
m_d:= \left\{
\begin{aligned}
& \inf_{u \in W_{d}} \int_{\Omega} \big( |\nabla u|^2 - \mu h(x) u^{2}\big)\, dx\,, \quad & \textup{if} \quad W_{d} \neq \emptyset\,,\\
& + \infty\,, & \quad \textup{if} \quad W_{d} = \emptyset\,,
\end{aligned}
\right.
\end{equation}
where
\[ W_d:= \{ w \in H_0^1(\Omega): d(x)w(x) = 0 \textup{ a.e. in } \Omega,\ \|w\| = 1\}.\]
\medbreak
\noindent We prove the following sharp result. 

\begin{theorem}  \label{necessary and sufficient-ch3} \it
Assume \eqref{hypCoercive-ch3}. Then \eqref{Pcoercive-ch3} has a solution if, and only if, $m_{d} > 0$.
\end{theorem}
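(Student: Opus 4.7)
The plan is to apply the Hopf-Cole change of variable $v = F(u) := (e^{\mu u}-1)/\mu$, which is a bijection of $\HLinfty$ onto the set of $v\in \HLinfty$ with $1+\mu v$ essentially bounded away from $0$, and which turns \eqref{Pcoercive-ch3} into the gradient-free equation
\[
-\Delta v + (1+\mu v)\,d(x)\,\frac{\log(1+\mu v)}{\mu} = (1+\mu v)\,h(x).
\]
Necessity of $m_d>0$ will follow by testing \eqref{Pcoercive-ch3} against a quadratic cut-off built from elements of $W_d$, while sufficiency will be obtained by a perturbation argument that reduces to the coercive case of \cite{B_M_P_1983, B_M_1995}, with a priori bounds coming precisely from the hypothesis $m_d>0$.

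\textbf{Necessity.} Let $u$ be a solution of \eqref{Pcoercive-ch3}. For $w\in W_d\cap \Linfty$ (which is dense in $W_d$ via standard truncation), we test the equation with $\varphi = w^2\in \HLinfty$:
\[
\int_\Omega 2w\,\nabla u\cdot\nabla w + \int_\Omega d\,u\,w^2 - \mu\int_\Omega w^2|\nabla u|^2 = \int_\Omega h\,w^2.
\]
Since $dw = 0$ a.e.\ implies $d\,u\,w^2 = 0$ a.e., the second integral vanishes; expanding
\[
|\mu w\,\nabla u-\nabla w|^2 = \mu^2 w^2|\nabla u|^2 - 2\mu w\,\nabla u\cdot\nabla w + |\nabla w|^2
\]
and multiplying through by $\mu$ yields the key identity
\[
\int_\Omega|\nabla w|^2 - \mu\int_\Omega h\,w^2 = \int_\Omega|\mu w\,\nabla u - \nabla w|^2 \;\geq\; 0,
\]
so $m_d\geq 0$. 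To exclude $m_d=0$: standard elliptic regularity for \eqref{Pcoercive-ch3} gives $\nabla u\in L^r(\Omega)$ for some $r>2$, hence any minimizing sequence $(w_n)\subset W_d$ with $\|w_n\|=1$ satisfies $\nabla w_n-\mu w_n\,\nabla u\to 0$ in $L^2$; combined with the Rellich convergence $w_n\to w$ in $L^p$ (some $p<2^{*}$), this upgrades to strong $H_0^1$-convergence with $\nabla w = \mu w\,\nabla u$ a.e. Then $w\,e^{-\mu u}$ has zero gradient, so it is a constant element of $\H$, hence identically $0$, contradicting $\|w\|=1$.

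\textbf{Sufficiency.} Assume $m_d>0$. A compactness-and-contradiction argument---suppose the displayed estimate below fails with $C_0 = n$, normalize $v_n$ with $\|v_n\|=1$, extract a weak $\H$-limit $v$; $L^2$-continuity of $\int h v^2$ holds because $h\in L^q$ with $q>N/2$; Fatou on $\int d\,v_n^2$ forces $dv=0$ a.e.; then $m_d>0$ produces the contradiction---yields constants $C_0,c>0$ such that
\[
\int_\Omega|\nabla v|^2 + C_0\int_\Omega d\,v^2 - \mu\int_\Omega h\,v^2 \;\geq\; c\,\|v\|^2, \qquad \forall\, v\in\H.
\]
I would then introduce the coercive approximations $-\Delta u_\epsilon = -(d+\epsilon)\,u_\epsilon+\mu|\nabla u_\epsilon|^2+h$ for $\epsilon>0$, each of which admits a unique $u_\epsilon\in\HLinfty$ by \cite{B_M_P_1983, B_M_1995}. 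Applying the necessity identity to $u_\epsilon$ together with the displayed coercivity should produce a uniform $H_0^1$-bound on $v_\epsilon:=F(u_\epsilon)$, which a Stampacchia-type iteration upgrades to a uniform $\Linfty$-bound. One finally passes to the limit $\epsilon\to 0^+$ via a.e.\ convergence of the gradients (Boccardo-Murat lemma) to obtain a solution of \eqref{Pcoercive-ch3}. The main obstacle is precisely this last step: the quadratic term $|\nabla u_\epsilon|^2$ is only weakly lower semicontinuous, and recovering a.e.\ convergence of $\nabla u_\epsilon$ requires the Boccardo-Murat compactness combined with the uniform $\Linfty$-bound---working on the transformed problem, which is linear in $\nabla v$, is essential.
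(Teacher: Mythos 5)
Both halves of your argument deviate from the paper's proof, and they have issues of different severity. For \emph{sufficiency}, the paper simply invokes \cite[Theorem~1.1]{DC_F_2018}; you instead sketch a self-contained approximation with $d+\epsilon$. The coercivity inequality you extract from $m_d>0$ is correct and can be proved exactly by the compactness-and-contradiction argument you indicate. But the step you describe as ``applying the necessity identity to $u_\epsilon$'' cannot deliver the required uniform $H_0^1$-bound on $v_\epsilon$: that identity comes from testing with $w^2$ for $w\in W_{d+\epsilon}\cap L^\infty$, and for $\epsilon>0$ one has $d+\epsilon>0$ a.e., so $W_{d+\epsilon}=\emptyset$ and the identity is vacuous. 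The coercivity inequality alone does not transparently bound $v_\epsilon$ either, since testing the transformed equation by $v_\epsilon$ produces the terms $\int(d+\epsilon)g(v_\epsilon)v_\epsilon$ and $\mu\int h v_\epsilon^2$ which the inequality does not control without further work. You flag this yourself, and the gap is real: as written, the sufficiency half is not a proof.

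For \emph{necessity} your route is a genuine and arguably more elementary alternative to the paper's. The step giving $m_d\geq 0$ is the same device the paper uses (its Young inequality \emph{is} your completion of the square), and you even obtain it as an equality, which is sharper. To exclude $m_d=0$, the paper instead produces a nonnegative minimizer $\phi_0$, identifies it as a first eigenfunction of $-\div(\mu^{-1}\nabla\cdot)+(du-h)\cdot$, invokes Harnack's inequality \cite[Theorem~8.20]{G_T_2001_S_Ed} to get $\phi_0>0$ in $\Omega$, and contradicts $d\phi_0\equiv 0$. Your minimizing-sequence argument avoids the eigenvalue interpretation entirely, which is pleasant. However, one intermediate claim is wrong as stated: you cite ``standard elliptic regularity gives $\nabla u\in L^r$ for some $r>2$'' to upgrade $\nabla w_n-\mu w_n\nabla u\to 0$ in $L^2$ to strong $H_0^1$-convergence, but a H\"older count shows one needs $r>N$ for $w_n\nabla u\to w\nabla u$ in $L^2$ with $w_n\to w$ only in $L^p$, $p<2^*$, and under $\partial\Omega\in\mathcal{C}^{0,1}$ and $h\in L^q$, $q>N/2$, that integrability is not available. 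The repair is straightforward and removes the regularity claim entirely: identify $\nabla w=\mu w\nabla u$ a.e.\ by testing against $\mathcal{C}_0^\infty$, using only $w_n\to w$ in $L^2$ and $\nabla u\in L^2$; get $w\neq 0$ from $\mu\int h w_n^2 = 1-o(1)$ together with $w_n\to w$ in $L^{2q'}$ (since $2q'<2^*$), so $\mu\int h w^2=1$; and finally from $w\,e^{-\mu u}=c$ a.e.\ write $w=c\,e^{\mu u}=c(1+\mu v)$ with $v\in\HLinfty$, so the constant $c=w-c\mu v\in\H$, which forces $c=0$, hence $w\equiv 0$, contradicting $w\neq 0$. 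With that repair your necessity argument is a clean substitute for the Harnack-based one.
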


\begin{remark} \label{remark 2-ch3} $ $ 
\begin{itemize}
\item[a)] By \cite[Theorem 1.1]{A_DC_J_T_2014} we know that the solution obtained is unique.
\item[b)] This result generalizes \cite[Proposition 3.1 and Remark 3.2]{A_DC_J_T_2015} and \cite[Theorem 1.3 with $p = 2$]{DC_F_2018}.
\item[c)] Since $h$ does not have a sign, there is no loss of generality in assuming $\mu > 0$. If $u$ is a solution to \eqref{Pcoercive-ch3} with $\mu < 0$ then $w = -u$ solves
\begin{equation*} 
-\Delta w = -d(x)w - \mu |\nabla w|^2 - h(x)\,, \quad w \in \HLinfty.
\end{equation*}
\end{itemize}
\end{remark}

As observed in \cite{DC_F_J_2018-A2}, the structure of the set of solutions to \eqref{P-ch3} depends on the size of $c^{+}$ but it is not affected by the size of $c^{-}$. To enlighten this,
we replace $c$ by a function $c_{\lambda} := \lambda c_{+} - c_{-}$ with $\lambda$ a real parameter. More precisely, we consider the boundary value problem 
\[ \label{Plambda-ch3} \tag{$P_{\lambda}$}-\Delta u = c_{\lambda}(x) u + \mu|\gradu|^2 + h(x)\,, \quad u \in \HLinfty\,,\]
under the assumption
\[ \label{A1-ch3} \tag{$A_1$} 
\left\{
\begin{aligned}
& \Omega \subset \RN,\, N \geq 2, \textup{ is a bounded domain with }  \partial \Omega \textup{ of class } \mathcal{C}^{0,1},\\
& c_{+},\ c_{-} \textup{ and } h \textup{ belong to } L^q(\Omega) \textup{ for some } q > N/2 \,,\\
& \mu > 0\,,\ c_{+} \gneqq 0\,,\ c_{-} \geq 0 \textup{ and } c_{+}(x)c_{-}(x) = 0 \textup{ a.e. } x \in \Omega\,.
\end{aligned}
\right.
\] 
\begin{remark}
As in \eqref{hypCoercive-ch3}, since $h$ has no sign, there is no loss of generality in assuming $\mu > 0$.
\end{remark}
\bigbreak
Before going further and due to its importance in the rest of the paper, let us stress that for $\lambda = 0$ the problem \eqref{Plambda-ch3} reduces to
\begin{equation} \tag{$P_0$} \label{P0-ch3}
-\Delta u = - c_-(x)  u + \mu |\gradu|^2 + h(x)\,, \quad u \in \HLinfty.
\end{equation}
As an immediate corollary of Theorem \ref{necessary and sufficient-ch3}, we have the following result.

\begin{cor}  
\label{characterization P0-ch3} 
Assume \eqref{A1-ch3}. Then \eqref{P0-ch3} has a solution if, and only if, $m_{c_{-}} > 0$.
\end{cor}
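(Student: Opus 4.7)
The plan is to apply Theorem \ref{necessary and sufficient-ch3} directly, with the choice $d := c_-$. The essential observation is that problem \eqref{P0-ch3} is literally the instance of \eqref{Pcoercive-ch3} obtained by substituting $c_-$ for $d$, so the only real task is to check that assumption \eqref{A1-ch3} implies the hypothesis \eqref{hypCoercive-ch3} for this particular choice.

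First I would verify the four requirements of \eqref{hypCoercive-ch3} one by one. The domain regularity (a bounded $\Omega \subset \RN$ with $N \geq 2$ and $\partial \Omega$ of class $\mathcal{C}^{0,1}$) is stated verbatim in \eqref{A1-ch3}. The integrability conditions $c_- \in L^q(\Omega)$ and $h \in L^q(\Omega)$ for some $q > N/2$ are a direct consequence of \eqref{A1-ch3}, which requires this for both $c_{+}$, $c_{-}$ and $h$. Finally, the sign conditions $\mu > 0$ and $c_- \geq 0$ appear explicitly in \eqref{A1-ch3}.

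Given these verifications, Theorem \ref{necessary and sufficient-ch3} applies with $d = c_-$ and yields that \eqref{P0-ch3} has a solution if and only if $m_{c_-} > 0$, where $m_{c_-}$ is the quantity obtained from \eqref{md-intro-ch3} by specialising $d$ to $c_-$, via the set $W_{c_-} = \{w \in H_0^1(\Omega) : c_-(x)w(x) = 0 \textup{ a.e. in } \Omega,\ \|w\| = 1\}$. This matches the quantity $m_{c_-}$ appearing in the corollary's statement, so no further identification is needed.

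There is essentially no obstacle at the corollary level: the entire difficulty has been absorbed into Theorem \ref{necessary and sufficient-ch3}, whose proof must deliver both the necessary direction (if $m_{c_-} \leq 0$, no bounded $H_0^1$-solution can exist) and the sufficient direction (if $m_{c_-} > 0$, a solution can be constructed, presumably via the change of variable $v = F(u)$ advertised in the abstract combined with lower and upper solution techniques). The corollary itself requires only a one-line verification that the framework matches.
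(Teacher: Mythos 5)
Your proposal is correct and matches the paper's intent exactly: the paper presents Corollary \ref{characterization P0-ch3} as an immediate consequence of Theorem \ref{necessary and sufficient-ch3} with $d = c_-$, and your verification that \eqref{A1-ch3} implies \eqref{hypCoercive-ch3} for this choice is precisely the (one-line) argument needed.
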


Having at hand this information about the \textit{limit coercive case}, we turn to the study of the \textit{non-coercive case} $\lambda > 0$. First, using mainly variational techniques, we prove the following theorem.

\begin{theorem} \label{th1-ch3} Assume \eqref{A1-ch3} and suppose that \eqref{P0-ch3} has a solution. Then, there exists $\Lambda > 0$ such that, for all $ 0 < \lambda < \Lambda$, \eqref{Plambda-ch3} has at least two solutions.
\end{theorem}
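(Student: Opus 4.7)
The plan is to reduce $(P_\lambda)$ to an equivalent semilinear problem via a Cole--Hopf change of variable, construct a first solution as a sub--supersolution close to $u_0$, and then exploit the resulting local minimum together with the mountain pass theorem to produce a second solution. Concretely, set $v = F(u) := (e^{\mu u} - 1)/\mu$, whose inverse $F^{-1}(v) = \log(1+\mu v)/\mu$ is defined on $\{v > -1/\mu\}$. A direct computation shows that $u \in \HLinfty$ solves $(P_\lambda)$ if and only if $v = F(u)$ satisfies $1+\mu v > 0$ and solves
\[ -\Delta v = c_\lambda(x) g(v) + h(x)(1+\mu v), \quad v \in \HLinfty, \]
where $g(s) := (1+\mu s)\log(1+\mu s)/\mu$ obeys $g(0) = 0$, $g'(0) = 1$ and $g(s) \sim s\log s$ at infinity, so the problem is the Euler--Lagrange equation of the $C^1$ functional
\[ J_\lambda(v) = \tfrac{1}{2}\int_\Omega|\nabla v|^2\,dx - \int_\Omega c_\lambda(x) G(v)\,dx - \int_\Omega h(x)\bigl(v + \tfrac{\mu}{2}v^2\bigr)\,dx, \]
with $G(s):=\int_0^s g(\tau)\,d\tau$, whose nonlinearity is subcritical with respect to the Sobolev embedding of $\H$.

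For the first solution, I would exhibit, for any sufficiently small $\lambda>0$, an ordered pair of sub-- and supersolutions of $(P_\lambda)$ sitting $L^\infty$-close to $u_0$. A supersolution $\bar u_\lambda$ is obtained by solving the perturbed limit-coercive problem
\[ -\Delta \bar u = -c_-(x)\bar u + \mu|\nabla \bar u|^2 + h(x) + \lambda c_+(x) M, \qquad M := \|u_0\|_\infty + 1, \]
which by Corollary \ref{characterization P0-ch3} has a unique solution; continuous dependence on the data (via the uniqueness from Remark \ref{remark 2-ch3}(a) and a standard compactness argument) yields $\bar u_\lambda \to u_0$ in $L^\infty$ as $\lambda \to 0^+$, hence $\bar u_\lambda \leq M$, and this makes $\bar u_\lambda$ a supersolution of $(P_\lambda)$. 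A subsolution $\underline u_\lambda$ is built analogously with $-\lambda c_+ M$ in place of $+\lambda c_+ M$. The sub--supersolution theory for equations with natural growth in the gradient (Boccardo--Murat--Puel) then yields a solution $u_{1,\lambda} \in [\underline u_\lambda,\bar u_\lambda]$ of $(P_\lambda)$.

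For the second solution, I would pass to the transformed function $v_{1,\lambda} = F(u_{1,\lambda})$ and show it is a local minimum of $J_\lambda$ on $\H$. Being, by construction, an $L^\infty$-local minimum of $J_\lambda$, it is a $C^1$-local minimum, and by a Brezis--Nirenberg-type argument valid for subcritical nonlinearities it is also a local minimum in the $\H$-topology. Since $c_+ \gneqq 0$ and $G$ is superquadratic ($G(s) \sim \tfrac{1}{2}s^2\log s$ as $s\to +\infty$), choosing $\varphi \gneqq 0$ with $\supp \varphi \subset \{c_+ > 0\}$ gives $J_\lambda(v_{1,\lambda} + t\varphi) \to -\infty$ as $t \to +\infty$. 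Mountain pass geometry is therefore in place, and the mountain pass theorem produces a second critical point $v_{2,\lambda}$, yielding $u_{2,\lambda} := F^{-1}(v_{2,\lambda})$ as a second solution of $(P_\lambda)$.

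The principal obstacle will be verifying a compactness condition (Palais--Smale or Cerami) for $J_\lambda$ at the mountain pass level, together with an $L^\infty$ bound on the resulting critical point ensuring $1+\mu v_{2,\lambda} > 0$ (so that $F^{-1}$ is applicable). The usual Ambrosetti--Rabinowitz condition $\theta G(s) \leq s g(s)$ with $\theta > 2$ fails at the logarithmic scale $G(s) \sim \tfrac{1}{2}s^2\log s$, so I expect to work with Cerami sequences, exploit that $g$ grows below any polynomial to obtain $\H$-boundedness, and then upgrade to $L^\infty$ control via a Moser iteration based on $c_\lambda, h \in L^q(\Omega)$ with $q > N/2$. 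A final check that the mountain pass level strictly exceeds $J_\lambda(v_{1,\lambda})$ separates $v_{2,\lambda}$ from $v_{1,\lambda}$ and produces the second solution.
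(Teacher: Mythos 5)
Your proposal genuinely diverges from the paper's route: you obtain the first solution by building ordered sub/supersolutions of $(P_\lambda)$ directly and invoking Boccardo--Murat--Puel, then upgrade to a local minimum via a Brezis--Nirenberg argument; the paper instead minimizes a \emph{truncated} functional $I_{\lambda,\alpha_\lambda}$ over the ball $D=\{v:\|v^+\|<R\}$ (Lemma \ref{mountainPassGeometry1-ch3} and Proposition \ref{localMinimumVariational-ch3}), with no sub/supersolutions at all for this theorem. Unfortunately your version has two gaps that the paper's machinery is specifically engineered to close.

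First, and most seriously, the mountain-pass critical point $v_{2,\lambda}$ of your \emph{untruncated} functional $J_\lambda$ has no reason to satisfy $1+\mu v_{2,\lambda}>0$: Moser iteration yields $\|v_{2,\lambda}\|_\infty\le C$, but nothing forces $C<1/\mu$, so $F^{-1}(v_{2,\lambda})$ need not be defined and you do not get a solution of $(P_\lambda)$. The paper resolves this precisely by truncating the nonlinearity below $\alpha_\lambda=F(\underline u_\lambda)$, where $\underline u_\lambda$ is the lower solution built in Proposition \ref{propLowerSol-ch3} to lie below every upper solution; Lemma \ref{equivProblems-ch3}\,(i) then shows that \emph{every} critical point of the truncated functional satisfies $v\ge\alpha_\lambda>-1/\mu$, so the correspondence with $(P_\lambda)$ is automatic. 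This is not a technical refinement — without it the second solution cannot be extracted. Second, the step ``being, by construction, an $L^\infty$-local minimum of $J_\lambda$'' is not justified: the sub/supersolution theorem yields a solution in $[\underline u_\lambda,\bar u_\lambda]$, not a minimizer of $J_\lambda$, and passing from a minimizer-on-an-interval to a local minimum via the De Figueiredo--Solimini / Brezis--Nirenberg route requires strict separation $\alpha\ll v\ll\beta$ in $\mathcal C^1$, which under \eqref{A1-ch3} (only $q>N/2$) is unavailable since solutions are merely in $\HLinfty$ and H\"older continuous, not $\mathcal C^1(\overline\Omega)$. That is exactly why the paper reserves the sub/supersolution-plus-$\mathcal C^1$ machinery for Theorems \ref{th2-ch3}--\ref{th-h-0-ch3} under \eqref{A2-ch3}, and uses direct ball-minimization under \eqref{A1-ch3}. (A smaller point: your supersolution solves a version of $(P_0)$ with $h$ replaced by $h+\lambda c_+M$, so Corollary \ref{characterization P0-ch3} applies only after checking the associated $m$-quantity stays positive, which holds for $\lambda$ small but should be said.)
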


\begin{remark}
This result improves and generalizes the main results obtained in \cite{J_RQ_2016, J_S_2013}.
\end{remark}
 
Considering stronger regularity assumptions on the coefficient functions and combining lower and upper solution methods with variational techniques, we improve the conclusions of Theorem \ref{th1-ch3}. We derive a more precise information on the structure of the set of solutions to \eqref{Plambda-ch3} when $\lambda > 0$. Under the assumption 
\begin{equation} \tag{$A_2$} \label{A2-ch3}
\left\{
\begin{aligned}
& \Omega \subset \RN,\ N \geq 2, \textup{ is a bounded domain with } \partial \Omega \textup{ of class } \mathcal{C}^{1,1},\\
& c_{+}\,,\, c_{-}\,, \textup{ and } h \textup{ belong to } L^p(\Omega) \textup{ for some } p > N,\\
& \mu > 0, \ c_{+} \gneqq 0,\ c_{-} \geq 0 \textup{ and } c_{+}(x)c_{-}(x) = 0 \textup{ a.e. in } \Omega,
\end{aligned}
\right.
\end{equation}
we prove the following theorems.

\begin{theorem}  \label{th2-ch3} \it
Assume \eqref{A2-ch3} and suppose that \eqref{P0-ch3} has a solution $u_0$ with $c_{+}u_0 \gneqq 0$. Then, every solution  $u$  to \eqref{Plambda-ch3} with $\lambda > 0$  and $c_{+} u \geq 0$ satisfies $u \gg u_0$. Moreover, there exists $\overline{\lambda} \in \,]0,+\infty[$, such that: 
\begin{itemize}
\item[$\bullet$] for $\lambda \in\, ]0,\overline{\lambda}[\,,$ the problem \eqref{Plambda-ch3} has at least two solutions $u_{\lambda,1}$, $u_{\lambda,2} \in \mathcal{C}_0^1(\overline{\Omega})$ such that:
\begin{itemize}
\item[$\circ$] $u_{\lambda,2} \gg u_{\lambda,1} \gg u_0$;
\item[$\circ$] $\lambda_1 < \lambda_2$ implies $u_{\lambda_1,1} \ll u_{\lambda_2,1}$;
\end{itemize}
\item[$\bullet$] \eqref{Plambda-ch3} with $\lambda = \overline{\lambda}$ has exactly one solution $u_{\overline{\lambda}} \in \mathcal{C}_0^1(\overline{\Omega})$ such that $u_{\overline{\lambda}} \gg u_0$;
\item[$\bullet$] for $\lambda > \overline{\lambda}$ the problem \eqref{Plambda-ch3} has no solutions $u$ such that $c_{+} u \geq 0$.
\end{itemize}
\end{theorem}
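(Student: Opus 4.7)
The plan is to reduce $(P_\lambda)$ to a semilinear problem via the Kazdan--Kramer change of variable $v=F(u):=\mu^{-1}(e^{\mu u}-1)$, which is a $\mathcal{C}^1$ diffeomorphism of $\R$ onto $(-1/\mu,+\infty)$. A direct computation shows that $u$ solves $(P_\lambda)$ if and only if $v\in\H\cap\Linfty$ solves $-\Delta v=g_\lambda(x,v):=e^{\mu F^{-1}(v)}(c_\lambda(x)F^{-1}(v)+h(x))$, which has a standard $\mathcal{C}^1$ variational structure on $\H$. Under \eqref{A2-ch3}, bootstrap and Calderon--Zygmund regularity give solutions in $\mathcal{C}_0^1(\overline{\Omega})$, so the orders $<$ and $\ll$ are well defined. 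All the inequalities claimed for $u$, $u_0$ can then be read off from the corresponding inequalities for the transforms $v$, $v_0:=F(u_0)$, since $F$ is strictly increasing and $C^1$.

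For the strict comparison $u\gg u_0$, set $w=v-v_0$ and subtract the transformed equations to obtain
\[-\Delta w + B(x)w=\lambda\,e^{\mu u}c_+(x)u \geq 0,\]
where $B\in L^q(\Omega)$ comes from writing $g_0(x,v)-g_0(x,v_0)$ via the mean value theorem. Since $c_+u_0\gneqq 0$ and $u$ cannot coincide with $u_0$ on $\{c_+>0\}$ unless $\lambda c_+ u\equiv 0$ (which is excluded because $c_+ u \geq 0$ and $u_0\gneqq 0$ on a set of positive measure in $\mathrm{supp}\,c_+$), the right-hand side is $\gneqq 0$. The strong maximum principle and Hopf's lemma give $w\gg 0$, i.e.\ $u\gg u_0$. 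This also yields, as a by-product, that $u_0$ is a strict lower solution of \eqref{Plambda-ch3} for every $\lambda>0$.

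I next define $\overline{\lambda}:=\sup\{\lambda>0:\eqref{Plambda-ch3}\text{ has a solution }u\text{ with }c_+u\geq 0\}$. That $\overline{\lambda}>0$ is precisely Theorem~\ref{th1-ch3}. For $\overline{\lambda}<+\infty$, the key is to test the semilinear equation against $\varphi_1$, the positive principal Dirichlet eigenfunction of $-\Delta$ on a smooth subdomain $\omega\subset\{c_+>0\}$: the superlinearity of $v\mapsto g_\lambda(x,v)$ at $+\infty$ (coming from the $e^{\mu F^{-1}(v)}$ factor) forces $\lambda\int_\omega c_+ u\,\varphi_1 \leq C$ with $C$ independent of $\lambda$, while the first integral grows like $\lambda^{1+\delta}$ for solutions lying above $u_0$, giving a contradiction for large $\lambda$. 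For $\lambda\in(0,\overline{\lambda})$, pick $\lambda'\in(\lambda,\overline{\lambda})$ so that a solution $u_{\lambda'}\geq u_0$ exists; then $u_0$ is a lower solution, $u_{\lambda'}$ is an upper solution, and the classical iteration produces a minimal solution $u_{\lambda,1}\in[u_0,u_{\lambda'}]$. Strict monotonicity $u_{\lambda_1,1}\ll u_{\lambda_2,1}$ for $\lambda_1<\lambda_2$ follows from the strict comparison of the first step applied to the pair $(u_{\lambda_2,1},u_{\lambda_1,1})$.

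For the second solution I would truncate $g_\lambda$ below $v_{\lambda,1}:=F(u_{\lambda,1})$, producing a modified energy $J_\lambda$ on $\H$ such that $v_{\lambda,1}$ is a strict local minimum in the $\mathcal{C}^1$ topology, and then promote it to a local minimum in the $H_0^1$ topology via the Brezis--Nirenberg argument. The superlinear growth of $g_\lambda$ at $+\infty$ supplies mountain-pass geometry, and a Cerami sequence for $J_\lambda$ provides a second critical point $v_{\lambda,2}\geq v_{\lambda,1}$ with $v_{\lambda,2}\not\equiv v_{\lambda,1}$, whence $u_{\lambda,2}\gg u_{\lambda,1}$ by strict comparison. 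The main obstacle is the verification of the Cerami condition: the exponential growth of $g_\lambda$, together with the fact that $h$ and $c_\lambda u+h$ change sign, prevents the direct use of Ambrosetti--Brezis--Cerami arguments and requires delicate $L^\infty$ a priori estimates in the spirit of \cite{J_RQ_2016,DC_F_J_2018-A2} to rule out blow-up of Cerami sequences. Finally, at $\lambda=\overline{\lambda}$ existence is obtained by extracting a convergent subsequence from $\{u_{\lambda_n,1}\}$ with $\lambda_n\nearrow\overline{\lambda}$, using the uniform $\mathcal{C}_0^1(\overline{\Omega})$ a priori bounds; uniqueness of the solution $u_{\overline{\lambda}}\gg u_0$ is proved by contradiction: two ordered solutions at $\overline{\lambda}$ would form a strict sub/super-solution pair and, combined with a continuity argument in $\lambda$, would allow a continuation beyond $\overline{\lambda}$, contradicting its definition. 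Nonexistence of solutions with $c_+ u\geq 0$ for $\lambda>\overline{\lambda}$ is then immediate from the definition.
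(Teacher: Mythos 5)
Your overall strategy matches the paper's: same Cole--Hopf transform $v=\mu^{-1}(e^{\mu u}-1)$, same definition of $\overline{\lambda}$, same sub/super-solution iteration for the minimal solution, same mountain-pass argument for the second solution, same limit argument at $\overline{\lambda}$, and the same continuation-in-$\lambda$ idea for uniqueness. The differences are in the details, and some of them leave genuine gaps.

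The most concrete gap is in the strict comparison step. You subtract the transformed equations, invoke the mean value theorem to write $-\Delta w+B(x)w=\lambda e^{\mu u}c_+(x)u$ with $w=v-v_0$, and then apply the strong maximum principle and Hopf's lemma. But the coefficient produced by the mean value theorem is $B(x)=c_-(x)g'(\xi(x))-\mu h(x)$ with $g'(s)=1+\ln(1+\mu s)$, and this has no sign: $g'$ is negative near $-1/\mu$, and $h$ changes sign. The strong maximum principle you would like to use (Theorem \ref{SMP-ch3} of the paper) requires the zeroth-order coefficient to be $\geq 0$; one can split $B=B^+-B^-$ and move $B^-w$ to the right only if one already knows $w\geq 0$. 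The paper first establishes $u\geq u_0$ by observing that every upper solution to \eqref{Plambda-ch3} with $c_+u\geq 0$ is an upper solution to \eqref{P0-ch3} and then applying the comparison principle for the limit coercive case (\cite[Lemma 2.2]{A_DC_J_T_2014}); only afterwards does it pass to the linearized inequality and apply the maximum principle (working in the $u$-variable, where the zeroth-order term is $c_-\geq 0$). You need an analogous two-step argument.

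The second gap is the Palais--Smale (you call it Cerami) condition. You correctly identify it as the technical heart of the problem and note that it requires delicate a priori estimates, but you leave it entirely unresolved, merely citing references ``in the spirit of''. In the paper this is Lemma \ref{boundPS-ch3}, a long multi-step argument which is specific to the truncated functional and is substantially different from the bounded-coefficient or sign-restricted settings of the references. Deferring it is not acceptable here, since the result genuinely depends on it and the argument is not a routine transposition.

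Two smaller points. Your nonexistence argument for large $\lambda$ tests against the principal eigenfunction of $-\Delta$ on a subdomain of $\{c_+>0\}$ and then invokes a heuristic growth estimate ``like $\lambda^{1+\delta}$''; the paper instead tests against the principal eigenfunction $\varphi_1$ of $-\Delta v+c_-v=\gamma c_+v$ on $\Omega$ and, using $u\geq u_0$ and $c_+u_0\gneqq 0$, obtains a clean linear contradiction $(\lambda-\gamma_1)\int c_+u_0\varphi_1+\int h\varphi_1\leq 0$. Your version is plausible but unjustified as stated. Similarly, the uniqueness of the solution at $\overline{\lambda}$ is more delicate than your sketch suggests: the paper first shows, via strict convexity of the quadratic nonlinearity, that there is no solution strictly between two ordered solutions $u_1\ll u_2$ at $\overline{\lambda}$, and then argues that $v_1$ is a \emph{strict} local minimum whose strictness persists for $\lambda$ slightly beyond $\overline{\lambda}$, contradicting the definition. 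The ``continuity argument'' needs to be made this precise.
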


\medbreak

\begin{theorem}  \label{th3-ch3} 
Assume \eqref{A2-ch3} and suppose that \eqref{P0-ch3} has a solution $u_0$ with $c_{+}u_0 \lneqq 0$. Then, for every $\lambda > 0$, the problem \eqref{Plambda-ch3} has at least two solutions $u_{\lambda,1}, u_{\lambda,2} \in \mathcal{C}_0^1(\overline{\Omega})$ such that:
\begin{itemize} 
\item[$\bullet$] $u_{\lambda,1} \ll u_{\lambda,2}$;
\item[$\bullet$] $u_{\lambda,1} \ll u_0$ and $c_+u_{\lambda,2}\not\leq 0$;
\item[$\bullet$] $\lambda_1 < \lambda_2$ implies $u_{\lambda_1,1} \gg u_{\lambda_2,1}$.
\end{itemize}
\end{theorem}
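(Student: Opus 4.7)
My approach would combine lower and upper solution techniques (to obtain $u_{\lambda,1}$) with a mountain pass argument (to obtain $u_{\lambda,2}$), carried out after the Cole--Hopf type change of variable $v = F(u) := (e^{\mu u}-1)/\mu$, under which \eqref{Plambda-ch3} becomes the gradient-free semilinear problem
\[
-\Delta v = (1+\mu v)\bigl[c_\lambda(x)G(v) + h(x)\bigr], \qquad v \in \HLinfty,\ v > -1/\mu,
\]
with $G = F^{-1}$. The upper solution is free: subtracting the equation for $u_0$ from that of \eqref{Plambda-ch3} and using $c_+ u_0 \lneqq 0$ together with $\lambda > 0$ gives $-\Delta u_0 - c_\lambda u_0 - \mu|\nabla u_0|^2 - h = -\lambda c_+ u_0 \geq 0$ a.e., so $u_0$ is an upper solution of \eqref{Plambda-ch3} for \emph{every} $\lambda > 0$.

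The crucial technical step is the construction, for each fixed $\lambda > 0$, of a strict lower solution $\alpha_\lambda \in \HLinfty$ with $\alpha_\lambda \ll u_0$. I would obtain $\alpha_\lambda$ by solving an auxiliary coercive problem obtained from \eqref{P0-ch3} by subtracting a large constant $K > 0$ from $h$; its solvability is guaranteed by the fact that the coercivity quantity associated with $h-K$ dominates $m_{c_-}(h) > 0$ (which holds by Corollary \ref{characterization P0-ch3}), and the lower-solution inequality for \eqref{Plambda-ch3} reduces to $-K \leq \lambda c_+ \alpha_\lambda$ a.e., which can be enforced by choosing $K$ large compared to $\lambda\|c_+\|_\infty\|\alpha_\lambda\|_\infty$ (the $L^\infty$-bound coming from standard a priori estimates). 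Comparison with $u_0$ gives $\alpha_\lambda \leq u_0$. The lower/upper-solution theorem for quadratic-gradient problems then provides $u_{\lambda,1} \in \mathcal{C}^1_0(\overline{\Omega})$ with $\alpha_\lambda \leq u_{\lambda,1} \leq u_0$. To sharpen to $u_{\lambda,1} \ll u_0$, I pass again to the $v$-variable: $w := F(u_0) - F(u_{\lambda,1}) \geq 0$ satisfies a linear elliptic inequality whose right-hand side contains $-\lambda c_+(x)(1+\mu F(u_{\lambda,1})) u_{\lambda,1} \gneqq 0$ (since $u_{\lambda,1} \leq u_0 \leq 0$ on $\supp c_+$ with strict inequality on a positive-measure set), and Hopf's lemma yields $w \gg 0$.

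For the second solution I would apply a mountain pass theorem to the functional $I_\lambda(v) = \frac{1}{2}\int_\Omega |\nabla v|^2\,dx - \int_\Omega \mathcal{F}_\lambda(x,v)\,dx$ associated with the transformed equation, after truncating the nonlinearity outside the order interval $[\alpha_\lambda, F(u_0)]$ so as to restore standard growth while keeping $v_{\lambda,1} := F(u_{\lambda,1})$ as a strict local minimizer. Mountain-pass geometry is provided by the superlinear behaviour of $(1+\mu v) c_+(x) G(v)$ as $v \to +\infty$ on $\supp c_+$ (recall $c_+ \gneqq 0$), and compactness of Cerami/Palais--Smale sequences is verified in the usual way. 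The resulting critical point yields $u_{\lambda,2} = G(v_{\lambda,2})$ distinct from $u_{\lambda,1}$; the property $c_+ u_{\lambda,2} \not\leq 0$ follows because otherwise $u_{\lambda,2}$ would satisfy the upper-solution argument of the first paragraph and thus lie in $[\alpha_\lambda, u_0]$, forcing $v_{\lambda,2} = v_{\lambda,1}$ by local uniqueness of the minimizer. For the monotonicity: if $0 < \lambda_1 < \lambda_2$, then $(\lambda_2 - \lambda_1) c_+ u_{\lambda_2,1} \leq 0$ (using $u_{\lambda_2,1} \leq u_0$), so $u_{\lambda_2,1}$ is a lower solution of $(P_{\lambda_1})$; selecting $u_{\lambda_1,1}$ as the maximal solution of $(P_{\lambda_1})$ in $[u_{\lambda_2,1}, u_0]$ and applying Hopf to $F(u_{\lambda_1,1}) - F(u_{\lambda_2,1})$ produces $u_{\lambda_1,1} \gg u_{\lambda_2,1}$. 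The main obstacle I expect is the uniform-in-$\lambda$ construction of $\alpha_\lambda$, which is what distinguishes the present statement from Theorem \ref{th2-ch3} and relies essentially on the sign hypothesis $c_+ u_0 \lneqq 0$; a secondary delicate point is compactness of Cerami sequences in the mountain-pass step, as the nonlinearity $(1+\mu v) c_\lambda(x) G(v)$ is not polynomial.
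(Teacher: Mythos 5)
Your overall strategy mirrors the paper's: Cole--Hopf change of variable, $u_0$ as upper solution, a variationally-obtained local minimum for $u_{\lambda,1}$, and mountain pass for $u_{\lambda,2}$. However, there are two genuine gaps.

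First, your construction of the lower solution $\alpha_\lambda$ (solving \eqref{P0-ch3} with $h$ replaced by $h-K$ and choosing $K$ large compared to $\lambda\|c_+\|_\infty\|\alpha_\lambda\|_\infty$) does not work as stated. Under \eqref{A2-ch3}, $c_+$ is only in $L^p(\Omega)$ for $p>N$, so $\|c_+\|_\infty$ may be infinite, and the pointwise inequality $-K\leq\lambda c_+(x)\alpha_\lambda(x)$ cannot be enforced by a constant. Even if $c_+$ were bounded, the construction is circular: enlarging $K$ makes $\alpha_\lambda$ more negative, so the threshold $\lambda\|c_+\|_\infty\|\alpha_\lambda\|_\infty$ grows with $K$, and it is not clear the bootstrap closes. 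The paper's Proposition \ref{propLowerSol-ch3} resolves exactly this difficulty by first establishing a uniform a priori lower bound (Lemma \ref{lowerBound-ch3}, which depends only on $\|d^+\|_q$, not on $d^-$ or $f^+$), then fixing the truncation level $M_{\lambda,2}$ from that bound, and finally running a lower/upper-solution iteration for a truncated problem. That two-level truncation is what breaks the circularity, and it is not a cosmetic detail---it is the main technical input of Section \ref{III-ch3}.

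Second, your argument that $c_+u_{\lambda,2}\not\leq 0$ is incomplete. From $c_+u_{\lambda,2}\leq 0$ you correctly deduce $u_{\lambda,2}\leq u_0$ (so $v_{\lambda,2}$ lies in the order interval $[\alpha_\lambda,v_0]$), but ``local uniqueness of the minimizer'' only controls a small ball around $v_{\lambda,1}$; it does not exclude a second solution elsewhere in the interval. What is actually needed is a \emph{global} uniqueness statement for solutions of \eqref{Plambda-ch3} with $c_+u\leq 0$. The paper supplies precisely this in Proposition \ref{uniquenessNonPositiveSolution-ch3}, whose proof is a convexity argument (a Serrin-type sliding comparison using $w_{\bar\varepsilon}=\frac{(1+\bar\varepsilon)u_2-u_1}{\bar\varepsilon}$ as a lower solution, with the optimal sliding parameter $\bar\varepsilon$ and Hopf's lemma to get the contradiction). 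This step cannot be replaced by a local argument. Once you insert that uniqueness proposition, the remainder (monotonicity of $u_{\lambda,1}$ in $\lambda$, the strict inequalities via Theorem \ref{SMP-ch3}) proceeds essentially as you describe.
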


To state our next result, let us assume that \eqref{P0-ch3} has a solution $u_0$,
define the linearized operator 
\begin{equation} \label{linearized-operator-ch3}
L_{u_0}(\varphi) : = -\Delta \varphi - 2\mu \langle \nabla u_0, \nabla \varphi \rangle + c_{-}(x)\varphi,
\end{equation}
and denote by $\gamma_1 > 0$ the principal eigenvalue of
\begin{equation} \label{linearized-eigenvalue-problem-ch3}
L_{u_0}(\varphi) = \gamma c_{+}(x) \varphi, \quad \varphi \in H_0^1(\Omega).
\end{equation}
We refer to the proof of Proposition \ref{prop-maximum-anti-maximum-ch3}
for its existence under the assumption \eqref{A2-ch3}.
Then, we have the following result.

\begin{theorem} \label{th-h-0-ch3}
Assume \eqref{A2-ch3} and suppose that \eqref{P0-ch3} has a solution $u_0$ with $c_{+} u_0 \equiv 0$.
Then:
\begin{itemize}
\item[$\bullet$] for $\lambda \in \, ]0, \gamma_1[$, the problem \eqref{Plambda-ch3} has at least two solutions $u_{\lambda,1} \equiv u_0 \ll u_{\lambda,2} \in \mathcal{C}_0^1(\overline{\Omega})$;
\item[$\bullet$] for $\lambda = \gamma_1$, the problem \eqref{Plambda-ch3} has exactly one solution $u_{\gamma_1} \equiv u_0$;
\item[$\bullet$] for $\lambda > \gamma_1$, the problem \eqref{Plambda-ch3} has at least two solutions $u_{\lambda,1} \equiv u_0 \gg u_{\lambda,2} \in \mathcal{C}_0^1(\overline{\Omega})$.
\end{itemize}
\end{theorem}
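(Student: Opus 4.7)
\textbf{Proof plan for Theorem \ref{th-h-0-ch3}.}

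The key preliminary remark is that, since $c_{+} u_{0} \equiv 0$ and $c_{+}c_{-} \equiv 0$ a.e., one has $c_{\lambda}(x) u_{0}(x) = -c_{-}(x) u_{0}(x)$ for every $\lambda \in \R$. Therefore the right-hand side of \eqref{Plambda-ch3} evaluated at $u_{0}$ coincides with that of \eqref{P0-ch3} independently of $\lambda$, so $u_{0}$ itself solves \eqref{Plambda-ch3} for every $\lambda$. This produces the trivial branch $u_{\lambda,1} \equiv u_{0}$ in all three cases and reduces the theorem to: (a) uniqueness at $\lambda = \gamma_{1}$; and (b) existence and sharp comparison with $u_{0}$ of a second solution $u_{\lambda,2}$ when $\lambda \neq \gamma_{1}$.

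For (a), let $u$ be any solution of $(P_{\gamma_{1}})$ and set $w := u - u_{0}$. Subtracting the equations, using $|\nabla u|^{2} - |\nabla u_{0}|^{2} = 2\langle \nabla u_{0}, \nabla w\rangle + |\nabla w|^{2}$ and $c_{+} u_{0} \equiv 0$ (so $c_{+} u = c_{+} w$), one obtains
\[
L_{u_{0}}(w) - \gamma_{1}\, c_{+}(x)\, w \;=\; \mu\, |\nabla w|^{2}\,.
\]
Testing against the positive principal eigenfunction $\varphi_{1}^{*}$ of the formal $L^{2}$-adjoint $L_{u_{0}}^{*}$ at the same eigenvalue $\gamma_{1}$ (supplied by Krein--Rutman as in Proposition \ref{prop-maximum-anti-maximum-ch3}), the left-hand side integrates to zero, leaving $\mu \int_{\Omega} |\nabla w|^{2}\, \varphi_{1}^{*}\, dx = 0$. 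Since $\mu > 0$ and $\varphi_{1}^{*} > 0$ a.e., this forces $w \equiv 0$, hence $u = u_{0}$.

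For (b), I would apply the Cole--Hopf change of variable $v = F(u) := (e^{\mu u} - 1)/\mu$ used throughout the paper, which converts \eqref{Plambda-ch3} into a semilinear problem $-\Delta v = g_{\lambda}(x,v)$ with a $C^{1}$ energy $J_{\lambda}$; a direct computation shows that, after conjugation by the positive weight $e^{-\mu u_{0}}$, the linearization of this new problem at $v_{0} := F(u_{0})$ is precisely $L_{u_{0}} - \lambda c_{+}$. Thus $\gamma_{1}$ is exactly the value of $\lambda$ at which $J_{\lambda}''(v_{0})$ ceases to be positive definite. For $\lambda \in (0, \gamma_{1})$, $v_{0}$ is a strict local minimum of $J_{\lambda}$, and a mountain-pass argument, with the $(PS)$ and $L^{\infty}$ a priori estimates already developed for Theorem \ref{th1-ch3}, yields a second critical point $v_{\lambda,2} \neq v_{0}$. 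For $\lambda > \gamma_{1}$, $v_{0}$ becomes a saddle along $\varphi_{1}$, and constrained minimization of $J_{\lambda}$ in the order-cone $\{v \leq v_{0}\}$ (on which $J_{\lambda}$ strictly decreases in the direction $-\varphi_{1}$ and is bounded below by the coercivity estimates of the previous sections) produces a second critical point. In both regimes, the identity
\[
L_{u_{0}}(u_{\lambda,2} - u_{0}) - \lambda\, c_{+}(u_{\lambda,2} - u_{0}) \;=\; \mu\, |\nabla(u_{\lambda,2} - u_{0})|^{2} \;\geq\; 0
\]
together with the strong maximum principle (for $\lambda < \gamma_{1}$) or the anti-maximum principle (for $\lambda > \gamma_{1}$) for $L_{u_{0}} - \lambda c_{+}$, supplied by Proposition \ref{prop-maximum-anti-maximum-ch3}, upgrades the comparison to the strict inequalities $u_{\lambda,2} \gg u_{0}$ (resp.\ $u_{\lambda,2} \ll u_{0}$).

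The hardest step will be extending the strict ordering $u_{\lambda,2} \ll u_{0}$ to \emph{all} $\lambda > \gamma_{1}$, not just a right-neighbourhood of $\gamma_{1}$: the Cl\'ement--Peletier anti-maximum principle is typically only local in $\lambda$, so this likely requires a global continuation argument along the branch $\lambda \mapsto u_{\lambda,2}$ controlled by the a priori estimates of the previous sections. The adjoint-eigenfunction test in (a), by contrast, handles the threshold $\lambda = \gamma_{1}$ in one line and is, in my view, the crucial technical ingredient of the theorem.
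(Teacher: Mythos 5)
Your part (a) argument — testing $L_{u_0}(w)-\gamma_1 c_+ w=\mu|\nabla w|^2$ against the positive adjoint eigenfunction $\varphi_1^*$ — is, modulo packaging, the same thing the paper does: the paper cites the non-existence statement at $\gamma=\gamma_1$ in Proposition~\ref{prop-maximum-anti-maximum-ch3} applied to $h=\mu|\nabla w|^2\gneqq 0$, and that statement is itself proved by exactly your adjoint-eigenfunction pairing. Also the reduction ``$c_+u_0\equiv 0$ implies $u_0$ solves \eqref{Plambda-ch3} for every $\lambda$'' is the paper's starting point as well.

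For part (b) you diverge from the paper, and two points need to be flagged. First, the constrained minimization of $J_\lambda$ over the order cone $\{v\le v_0\}$ for $\lambda>\gamma_1$ is not on its own a proof: the constrained minimizer solves a variational inequality, and since $v_0$ is a \emph{solution} (not a strict upper solution of the unconstrained problem), the minimizer can lie on the constraint boundary without being a free critical point. To pass from the constrained minimizer to a genuine critical point one needs an \emph{interior} obstacle, i.e.\ a strict upper solution $\beta\ll u_0$. That is precisely what Lemma~\ref{strict-upper-cu0 0 negative-ch3} supplies, and what your sketch omits. Second, your worry that the anti-maximum principle is only local in $\lambda$ is correct, but the paper sidesteps it entirely rather than proving a global AMP or running a continuation. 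Given $\lambda>\gamma_1$, pick $\lambda_0\in\,]\gamma_1,\min\{\lambda,\gamma_1+\delta\}[\,$ and take $w\ll 0$ with $L_{u_0}(w)=\lambda_0 c_+w+1$; then $\beta=u_0+\epsilon w$ is a strict upper solution to $(P_\lambda)$ for the \emph{original} $\lambda$, because the only $\lambda$-dependent term in the verification is $\epsilon(\lambda_0-\lambda)c_+ w$, which is $\ge 0$ since $\lambda_0<\lambda$ and $w\ll 0$; the remaining $\epsilon-\mu\epsilon^2|\nabla w|^2$ is made nonnegative by choosing $\epsilon$ small. So no global continuation is needed, and the argument works uniformly for all $\lambda>\gamma_1$. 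Symmetrically, for $\lambda\in\,]0,\gamma_1[$ the paper does not invoke positive definiteness of $J_\lambda''(v_0)$ (which would require a coercivity estimate for a weighted, non-selfadjoint quadratic form that you do not verify, and also $C^2$-regularity of the truncated $J_\lambda$); instead Lemma~\ref{strict-upper-cu0 0 positive-ch3} produces a strict upper solution $\beta\gg u_0$ from the maximum principle side of Proposition~\ref{prop-maximum-anti-maximum-ch3}, and a local minimum of $J_\lambda$ is then extracted from the lower-upper solution pair $v_0-1\ll w_\lambda$ via Corollary~\ref{localMinimizer-ch3} and Proposition~\ref{c01vsWop-ch3}, followed by the mountain-pass/Ghoussoub dichotomy exactly as in Theorem~\ref{th2-ch3}. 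In short: your Hessian-based picture of $\gamma_1$ as a bifurcation threshold is the right heuristic, but the paper converts it into rigour through the maximum/anti-maximum principle and a perturbed upper solution, avoiding both the constrained-minimizer obstruction and the global AMP issue that you correctly identify but leave open.
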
 

\begin{remark} $ $ 
\begin{itemize}
\item[a)] Under the assumption \eqref{A2-ch3},  for all $\lambda \in \R$,  every solution to \eqref{Plambda-ch3} belongs to $\mathcal{C}_0^1(\overline{\Omega})$. This was proved in \cite[Theorem 2.2]{DC_J_2017}.
\item[b)] At the expense of considering $\mu > 0$ constant instead of $\mu \in \Linfty$ with $\mu(x) \geq \mu_1 > 0$ in $\Omega$, Theorems \ref{th2-ch3}, \ref{th3-ch3} and  \ref{th-h-0-ch3} extend the main existence results of \cite{DC_J_2017} to the case where $c$ may change sign. Moreover, unlike \cite{DC_J_2017}, we do not assume global sign conditions on $u_0$ (solution to \eqref{P0-ch3}). Hence, even in the case where $c_{-} \equiv 0$, i.e. $c$ has a sign, our hypotheses are weaker than the corresponding ones in \cite{DC_J_2017}.
\item[c)] Theorem \ref{th2-ch3} removes the ``thick zero set'' condition on the support of $c_{\lambda}$ considered in  \cite[Theorem 1.2]{DC_F_J_2018-A2} and gives somehow a more precise information. In turn, here $\mu$ is constant and we require stronger regularity on the coefficient functions $c_{\lambda}$ and $h^{+}$.
\end{itemize}
\end{remark}
\medbreak

Finally, we give sufficient conditions in terms of $h$ ensuring that the hypotheses of Theorem \ref{th2-ch3},   \ref{th3-ch3} or \ref{th-h-0-ch3} are satisfied.

\begin{cor} \label{cor-ch3} Under the assumption \eqref{A2-ch3}, it follows that:
\begin{itemize}
\item[$\bullet$] If $h \gneqq 0$ and \eqref{P0-ch3} has a solution, then the conclusions of Theorem \ref{th2-ch3} hold.
\item[$\bullet$] If $h \lneqq 0$, then the conclusions of Theorem \ref{th3-ch3} hold.
\item[$\bullet$] If $h \equiv 0$, then the conclusions of Theorem \ref{th-h-0-ch3} hold.
\end{itemize}
\end{cor}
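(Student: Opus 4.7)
The plan is to reduce each of the three bullets to a strong maximum principle argument applied to the Kazdan--Kramer-type transform $v_0:=\frac{1}{\mu}(e^{\mu u_0}-1)$, which the authors highlight as the guiding idea of the paper. A direct computation gives $\nabla v_0=e^{\mu u_0}\nabla u_0$ and $\Delta v_0=e^{\mu u_0}(\Delta u_0+\mu|\nabla u_0|^2)$, so \eqref{P0-ch3} is equivalent to
\begin{equation*}
-\Delta v_0+c_-(x)\,a(v_0)\,v_0=e^{\mu u_0}\,h(x)\quad\text{in }\Omega,\qquad v_0=0\text{ on }\partial\Omega,
\end{equation*}
where $a(v):=\frac{(1+\mu v)\ln(1+\mu v)}{\mu v}$ (extended by $a(0):=1$) is continuous and nonnegative on $(-1/\mu,+\infty)$. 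Since every solution to \eqref{P0-ch3} belongs to $\mathcal{C}_0^1(\overline{\Omega})$ under \eqref{A2-ch3} by Remark \ref{remark 2-ch3} a), the coefficient $c_-(\cdot)\,a(v_0(\cdot))$ lies in $L^p(\Omega)$ and is nonnegative, so the classical strong maximum principle applies to the resulting linear equation.

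For the third bullet ($h\equiv 0$), $u_0\equiv 0$ trivially solves \eqref{P0-ch3}, and by the uniqueness recalled in Remark \ref{remark 2-ch3} a) it is the only solution; hence $c_+u_0\equiv 0$ and Theorem \ref{th-h-0-ch3} applies. For the first bullet ($h\gneqq 0$) the solution $u_0$ is provided by hypothesis, and the transformed right-hand side $e^{\mu u_0}h$ is $\gneqq 0$, so $v_0\geq 0$ by the weak maximum principle and then $v_0>0$ in $\Omega$ by the strong maximum principle; hence $u_0>0$ in $\Omega$, and combined with $c_+\gneqq 0$ this yields $c_+u_0\gneqq 0$, so Theorem \ref{th2-ch3} applies.

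For the second bullet ($h\lneqq 0$) I first have to produce a solution. Since $\mu>0$ and $h\leq 0$, every admissible $w\in W_{c_-}$ satisfies
\begin{equation*}
\int_{\Omega}\bigl(|\nabla w|^2-\mu h\,w^2\bigr)\,dx\geq \int_\Omega|\nabla w|^2\,dx=1,
\end{equation*}
so $m_{c_-}\geq 1>0$ and Corollary \ref{characterization P0-ch3} yields a solution $u_0$. The same transformation gives $e^{\mu u_0}h\lneqq 0$, so $-v_0$ satisfies a linear inequality with nonnegative zero-order coefficient, zero boundary data, and a $\gneqq 0$ right-hand side; the strong maximum principle therefore produces $u_0<0$ throughout $\Omega$, and combined with $c_+\gneqq 0$ we conclude $c_+u_0\lneqq 0$, so Theorem \ref{th3-ch3} applies.

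The only nontrivial point, and the main obstacle to a fully detailed write-up, is the justification of the strong maximum principle for the transformed equation in the $L^p$-coefficient setting considered here (with $p>N$); this requires a Trudinger--Serrin type Harnack inequality, but this is standard and has already been used in the companion works \cite{DC_J_2017,DC_F_J_2018-A2}.
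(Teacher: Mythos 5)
Your proof is correct but takes a noticeably different route from the paper's.  The paper does not perform the Cole--Hopf transformation here; instead it rewrites the equation for $u_0$ as
\[
-\Delta u_0 - \mu\langle \nabla u_0, \nabla u_0\rangle + c_-(x)u_0 = h(x),
\]
treats $B(x):=-\mu\nabla u_0(x)\in(L^\infty(\Omega))^N\subset(L^p(\Omega))^N$ as a frozen drift coefficient, and applies the paper's own Theorem~\ref{SMP-ch3} directly to $u_0$ (for the first bullet this gives $u_0\gg 0$; for the second one first uses Lemma~\ref{compPrincipleP0-ch3} to get $u_0\le 0$, then Theorem~\ref{SMP-ch3} on the same rewritten equation to get $u_0\ll 0$).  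You instead transform to $v_0=\frac{1}{\mu}(e^{\mu u_0}-1)$, which eliminates the gradient term altogether, and apply the maximum principle to the zero-drift equation $-\Delta v_0 + c_-(x)a(v_0(x))v_0 = (1+\mu v_0)h(x)$.  Both are standard and both hinge on the same strong-maximum-principle ingredient; your transformed equation is if anything easier because there is no first-order term.  You also supply an explicit one-line argument that $h\le 0$ forces $m_{c_-}\ge 1>0$, which the paper merely asserts (and already records in the remark following the Corollary).

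One point worth correcting: you describe the justification of the strong maximum principle for the transformed equation as the ``main obstacle'' requiring a Trudinger--Serrin Harnack inequality.  There is no obstacle.  In your transformed equation the first-order term vanishes, and the zeroth-order coefficient $c_-(\cdot)\,a(v_0(\cdot))$ is nonnegative and bounded (hence in $L^p$ for every $p$), so you can cite the paper's Theorem~\ref{SMP-ch3} with $B\equiv 0$, which has been proved in Appendix~\ref{Appendix-ch3} precisely for this purpose.  Even the bounded-coefficient SMP from \cite[Theorem~3.27]{T_1987} would already suffice after noting that $c_-a(v_0)\in L^\infty$.  In short, the conclusion you need is available at no extra cost, so you should cite Theorem~\ref{SMP-ch3} rather than leave this step open.
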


\begin{remark} \mbox{}
\begin{itemize}
\item [a)]
In case $h \leq 0$ (i.e. $h \lneqq 0$ or $h \equiv 0$), by Theorem \ref{necessary and sufficient-ch3},  the problem \eqref{P0-ch3} has always a solution.
\item[b)]
Let us consider the boundary value problem
\[
\left\{
\begin{aligned}
-&u''=\lambda c_+(x) u+ |u'|^2+h(x), \quad x\in ]-2\pi,2\pi[, \\
&u(-2\pi)=0,\ \, u(2\pi)=0,
\end{aligned}
\right.
\]
with 
\[ h(x) = 
\left\{
\begin{aligned}
&\cos x - |\sin x|^2 ,& \mbox{ for } x\in ]-2\pi, 0[, \\
&0, \quad & \mbox{ for } x \in  [0,2\pi[,
\end{aligned}
\right.
\]
which changes sign, and 
\[ c_{+}(x) = 
\left\{
\begin{aligned}
&0 ,& \mbox{ for } x\in ]-2\pi, 0[, \\
&\cos x + 1, \quad & \mbox{ for } x \in  [0,2\pi[.
\end{aligned}
\right.
\]
The unique solution to \eqref{P0-ch3} is given by
\[ u_0(x) = 
\left\{
\begin{aligned}
&\cos x-1 ,& \mbox{ for } x\in [-2\pi, 0[, \\
&0, \quad & \mbox{ for } x \in  \,[0,2\pi],
\end{aligned}
\right.
\]
and satisfies  $u_0\lneqq 0$ and $c_+u_0\equiv 0$. This example first shows that $u_0\lneqq 0$ does not imply $c_+u_0\lneqq 0$. It also shows that we can have $c_+u_0\equiv 0$ without having $h\equiv 0$ and finally that we can enter in the framework of Theorem  \ref{th-h-0-ch3} without having a sign on $h$.
\end{itemize}
\end{remark}



\medbreak
%
%
%

We provide now some ideas of the proofs of Theorems \ref{th1-ch3}, \ref{th2-ch3} , \ref{th3-ch3} and \ref{th-h-0-ch3}. First of all we notice that, as $\mu$ is assumed to be a constant, we can perform 
a  Cole-Hopf change of variable 
and reduce \eqref{Plambda-ch3} to a semilinear problem. Considering 
\begin{equation} \label{cvI-ch3}
 v = \frac{1}{\mu} \Big( e^{\mu u} - 1 \Big)\,,
\end{equation}
one can check that $u$ is a solution to \eqref{Plambda-ch3} if, and only if, $v > -1/\mu$ is a solution to
\begin{equation} \label{p3-ch3}
-\Delta v = c_{\lambda}(x) g(v) + (1+\mu v) h(x)\,, \quad v \in \H\,,
\end{equation}
where $g$ is given by
\[ g(s) = \frac{1}{\mu} (1+\mu s)\ln(1+\mu s)\,, \quad \textup{ for } s > -1/\mu\,.\]
Hence, we need to control from below the solutions $v$ to \eqref{p3-ch3}.
This is one of the main difficulties we have to face when dealing with \eqref{p3-ch3}. More precisely,  we need to verify that every solution $v$ to \eqref{p3-ch3} satisfies $v > -1/\mu$.  
To that end, we truncate problem \eqref{p3-ch3} using a lower solution $\underline{u}_{\lambda}$ to \eqref{Plambda-ch3}. More precisely, we define 
\[ \alpha_{\lambda} = \frac{1}{\mu} \Big(e^{\mu \underline{u}_{\lambda}} - 1 \Big)\]
and introduce the problem
\[ \tag{$Q_{\lambda}$} \label{Qlambda-intro-ch3}
-\Delta v = f_{\lambda}(x,v)\,,\quad v \in \H\,,\]
where
\begin{equation*}
f_{\lambda}(x,s) = \left\{ \begin{aligned}
& c_{\lambda}(x)g(s) + (1+\mu s) h(x)\,, \quad & \textup{ if } s \geq \alpha_{\lambda}(x)\,,\\
& c_{\lambda}(x)g(\alpha_{\lambda}(x)) + (1+\mu \alpha_{\lambda}(x)) h(x)\,, \quad &\textup{ if } s \leq \alpha_{\lambda}(x)\,.
\end{aligned}
\right.
\end{equation*}
We then show that the solutions to \eqref{Qlambda-intro-ch3} satisfy $v\geq \alpha_{\lambda} > -1/\mu$ and so, they give solutions 
to \eqref{Plambda-ch3}.\medbreak

Under the assumptions of Theorem \ref{th2-ch3}, we easily see that $u_0$ is a lower solution to \eqref{Plambda-ch3} for all $\lambda > 0$ and we shall use it as $\underline{u}_{\lambda}$ in the definition of $\alpha_{\lambda}$. 
In the other cases, we do not have an obvious lower solution at hand. However, in Section \ref{III-ch3}, we manage to construct a lower solution $\underline{u}_{\lambda}$ to \eqref{Plambda-ch3} below every upper solution to this problem. Using this lower solution $\underline{u}_{\lambda}$ in the definition of $\alpha_{\lambda}$, we obtain a problem  \eqref{Qlambda-intro-ch3} which is completely equivalent to \eqref{Plambda-ch3}. Let us point out that the fact that $c_{\lambda}$ has no sign causes several difficulties in this construction. We refer to Proposition \ref{propLowerSol-ch3} for more details.



\medbreak
The main advantage of problem \eqref{Qlambda-intro-ch3} which respect to \eqref{Plambda-ch3} is that it admits a variational formulation. We shall then look for solutions to \eqref{Qlambda-intro-ch3} as critical points of the associated functional $I_{\lambda}: \H \to \R$ defined as
\[ I_{\lambda}(v) = \frac{1}{2} \int_{\Omega} |\gradu|^2 dx - \int_{\Omega} F_{\lambda}(x,v) dx \,, \]
where $G(s) = \int_0^s g(t) dt,$
\begin{equation*} 
F_{\lambda}(x,s) = c_{\lambda}(x)\,G(s) + \frac{1}{2\mu} (1+\mu s)^2 h(x)\,, \quad \textup{ if } s \geq \alpha_{\lambda}(x)\,,
\end{equation*}
and
\begin{equation*} 
\begin{aligned}
F_{\lambda}(x,s) = \big[ c_{\lambda}(x) g(\alpha_{\lambda}(x)) & + (1+\mu\alpha_{\lambda}(x))h(x) \big](s-\alpha_{\lambda}(x)) \\ & + c_{\lambda}(x)\, G(\alpha_{\lambda}(x)) + \frac{1}{2\mu}(1+\mu \alpha_{\lambda}(x))^2 h(x)\,, \quad \textup{ if } s \leq \alpha_{\lambda}(x)\,.
\end{aligned}
\end{equation*}
\medbreak
When $\lambda $ is positive, this functional is unbounded from below.
Then, in trying to obtain critical points, we have to overcome several difficulties. First, we shall notice that $g$ is only slightly superlinear at infinity. Hence, $I_{\lambda}$ does not satisfies an Ambrosetti-Rabinowitz type condition. Moreover, the coefficient functions $c_{\lambda}$ and $h$ have no sign. In this context, to prove that the Palais-Smale sequences are bounded requires a special care. 
See Section \ref{Section-Auxiliary-funcional-ch3} for more details.
\medbreak

Having at hand the Palais-Smale condition for $I_{\lambda}$ with $\lambda > 0$, we shall look for critical points which are either local minimum or of mountain-pass type. In Theorem \ref{th1-ch3}, we work mainly with variational techniques as in \cite{J_S_2013, J_RQ_2016}. Nevertheless, since our hypotheses are weaker than the corresponding ones in \cite{J_S_2013, J_RQ_2016}, to prove that the mountain-pass geometry holds becomes more involved. 
\medbreak

In Theorems \ref{th2-ch3}, \ref{th3-ch3} and \ref{th-h-0-ch3} we combine lower and upper solution with variational techniques. In all three theorems, a first solution is obtained throughout the existence of well-ordered lower and upper solutions. This solution is further proved to be a local minimum. Then, we obtain a second solution by a mountain-pass type argument.


\medbreak

The rest of the paper is organized as follows. In Section \ref{II-ch3} we recall some auxiliary results that will be useful in the rest of the paper. Section \ref{coercive-ch3} is concerned with the proof of Theorem \ref{necessary and sufficient-ch3}. In Section \ref{Section-Auxiliary-funcional-ch3} we prove the Palais-Smale condition for the functional associated to 
\eqref{Qlambda-intro-ch3}.
 Section \ref{III-ch3} is devoted to the construction of  the strict lower solution $\underline{u}_{\lambda}$ to \eqref{Plambda-ch3} below every upper solution to this problem.
 The proofs of Theorems \ref{th1-ch3}, \ref{th2-ch3} and \ref{th3-ch3} are done respectively in Sections \ref{section 6}, \ref{section 7} and \ref{V-ch3}. In Section \ref{section 7}  we prove the first part of Theorem \ref{th-h-0-ch3} and of Corollary \ref{cor-ch3}, the rest of their proofs being postponed to Section \ref{V-ch3}.
 Finally, in Appendix \ref{Appendix-ch3}, we prove a Hopf's boundary point Lemma with unbounded lower order terms. 

\medbreak

\noindent \textbf{Acknowledgments.}
The authors thank   warmly L. Jeanjean for helpful discussions.
\medbreak

\noindent \textbf{Notation.}
\begin{enumerate}{\small 
\item[1)] In  $\mathbb R^N$, we use the notations $|x|=\sqrt{x_1^2+\ldots+x_N^2}$ and $B_R(y)=\{x\in \mathbb R^N : |x-y|<R\}$.
\item[2)] We denote $\mathbb R^+=\,]0,+\infty[$ and $\mathbb R^-=\,]-\infty,0[$.}
\item[3)] For $v \in L^1(\Omega)$ we define $v^+= \max(v,0)$ and $v^- = \max(-v,0)$.
\item[4)] For $a$, $b \in L^1(\Omega)$ we denote $\{a \leq b\} = \{x\in \Omega:\, a(x)\leq b(x)\}\,.$
\item[5)] The space $\H$ is equipped with the
norm $\|u\|:=\big( \int_\Omega |\nabla u|^2\,dx\big)^{1/2}$.
\item[6)] 
For $p\in [1,+\infty[$, the norm $(\int_{\Omega}|u|^pdx)^{1/p}$
in $L^p(\Omega)$ is denoted by $\|\cdot\|_p$. We denote by $p^{\prime}$
the  conjugate exponent of $p$ and by $2^*$ the Sobolev critical exponent i.e. 
$2^*=2N/(N-2)$ if $N \geq 3$ and $2^{*} = + \infty$ in case $N = 2$. The norm in $L^{\infty}(\Omega)$ is $\|u\|_{\infty}=\mbox{esssup}_{x\in \Omega}|u(x)|$.

\end{enumerate}
\medbreak

\section{Preliminaries} \label{II-ch3}

This section presents some definitions and known results which are going to play an important role throughout the work. Let us start with some results on  lower and upper solution. We consider the boundary value problem
\begin{equation} \label{eqP1-ch3}
-\Delta u + H(x,u,\nabla u) = \xi(x)\,, \quad u \in \HLinfty,
\end{equation}
where $\xi$ belongs to $L^{1}(\Omega)$ and $H: \Omega \times \R \times \RN \to \R$ is a Carath\'eodory
function 
(i.e. for every $(s,\xi)\in \R \times \RN$,
$f(\cdot,s,\xi)$ is measurable on $\Omega$ and 
for a.e. $x\in \Omega$, $f( x, \cdot,\cdot)$ is
continuous on
$\R \times \RN$) .

\begin{definition}  \label{lowerUpperSolution-ch3}
We say that $\alpha \in H^1(\Omega)\cap L^{\infty}(\Omega)$ is a \textit{lower solution to \eqref{eqP1-ch3}} if 
$\alpha^{+} \in \H$ and, for all $\varphi \in \HLinfty$ with $\varphi \geq 0$, if follows that
\[ 
\int_{\Omega} \nabla \alpha \nabla \varphi\,dx + \int_{\Omega} H(x,\alpha, \nabla \alpha) 
\varphi\, dx \leq \int_{\Omega} \xi(x) \varphi\,dx\,.
\]
Similarly, $\beta \in H^1(\Omega)\cap L^{\infty}(\Omega)$ is an \textit{upper solution to \eqref{eqP1-ch3}} if 
$\beta^{-} \in \H$ and, for all $\varphi \in \HLinfty$ with $\varphi \geq 0$, if follows that
\[ 
\int_{\Omega}  \nabla \beta \nabla \varphi\,dx + \int_{\Omega} H(x,\beta, \nabla \beta) 
\varphi\, dx \geq \int_{\Omega} \xi(x) \varphi\,dx\,.
\]
\end{definition} 

\begin{theorem} \rm \cite[Theorems 3.1 and 4.2]{B_M_P_1988} \label{BMP1988-ch3}
\it Assume the existence of a non-decreasing function $b: \R^{+} \rightarrow \R^{+}$ and a function 
$k \in L^1(\Omega)$ such that
\[ 
|H(x,s,\xi)| \leq b(|s|)[ k(x) + |\xi|^2 ], \quad \textup{a.e. } x \in \Omega, \,\,
\forall (s,\xi) \in \R \times \RN\,.\]
If there exist a lower solution $\alpha$ and an upper solution $\beta$ to \eqref{eqP1-ch3} with $\alpha \leq \beta$, 
then there exists a solution $u$ to \eqref{eqP1-ch3} with $\alpha \leq u \leq \beta$.
Moreover, there exists $u_{min}$ (resp. $u_{max}$) minimum (resp. maximum) solution to \eqref{eqP1-ch3} with 
$\alpha \leq u_{min}\leq u_{max} \leq \beta$ and such that,  every solution $u$ to \eqref{eqP1-ch3} with
$\alpha \leq u \leq \beta$ satisfies $u_{min}\leq u \leq u_{max}$.
\end{theorem}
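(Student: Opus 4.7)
The plan is to combine truncation outside the ordered interval $[\alpha,\beta]$ with the classical approximation machinery for quasilinear problems with natural growth in the gradient developed by Boccardo--Murat--Puel. First I would define the Carath\'eodory truncation
\[
\widetilde H(x,s,\xi)=
\begin{cases}
H(x,\beta(x),\nabla\beta(x)), & \text{if } s\geq \beta(x),\\
H(x,s,\xi), & \text{if } \alpha(x)\leq s\leq \beta(x),\\
H(x,\alpha(x),\nabla\alpha(x)), & \text{if } s\leq \alpha(x),
\end{cases}
\]
which still satisfies $|\widetilde H(x,s,\xi)|\leq M(\widetilde k(x)+|\xi|^2)$ with $M=b(\max(\|\alpha\|_\infty,\|\beta\|_\infty))$ and $\widetilde k\in L^1(\Omega)$. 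Then I would regularize the quadratic growth by the standard truncation
\[
\widetilde H_n(x,s,\xi)=\frac{\widetilde H(x,s,\xi)}{1+\tfrac{1}{n}|\widetilde H(x,s,\xi)|},
\]
so that $|\widetilde H_n|\leq n$ pointwise. For each $n$, a Leray--Schauder / Schauder argument applied to the now globally bounded nonlinearity yields a solution $u_n\in H_0^1(\Omega)\cap L^{\infty}(\Omega)$ of $-\Delta u_n+\widetilde H_n(x,u_n,\nabla u_n)=\xi$.

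Next, testing the approximate equation against $(u_n-\beta)^+\in H_0^1(\Omega)\cap L^{\infty}(\Omega)$ and using the upper-solution inequality for $\beta$, together with the fact that $\widetilde H_n$ is frozen on $\{u_n\geq\beta\}$ (so that the nonlinear contributions cancel), should give $\int_\Omega |\nabla(u_n-\beta)^+|^2\,dx\leq 0$, and hence $u_n\leq \beta$. The symmetric test with $(\alpha-u_n)^+$ yields $u_n\geq \alpha$. So the approximating sequence is trapped in $[\alpha,\beta]$ and is uniformly bounded in $L^{\infty}(\Omega)$. Using the classical exponential test function $u_n e^{\gamma u_n^2}$, with $\gamma$ chosen large enough in terms of $b(\max(\|\alpha\|_\infty,\|\beta\|_\infty))$, the quadratic gradient contribution is absorbed by the Laplacian and one obtains a uniform bound in $H_0^1(\Omega)$.

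The main obstacle is the passage to the limit, since weak $H_0^1$ convergence $u_n\rightharpoonup u$ is not enough to pass to the limit in a natural-growth term which is only controlled by $|\nabla u_n|^2$. To handle this, I would invoke the Boccardo--Murat a.e. convergence of gradients: testing the equation against $\varphi_k(u_n-u):=T_k(u_n-u)\exp(\gamma(u_n-u)^2)$, where $T_k$ denotes the usual truncation at level $k$ and $\gamma$ is taken large in terms of the $L^\infty$ bound and the function $b$, produces a left-hand side that controls $\int_{\{|u_n-u|\leq k\}}|\nabla(u_n-u)|^2\,dx$ while the quadratic contribution on the right is absorbed. This forces $\nabla u_n\to \nabla u$ a.e.\ in $\Omega$, and Vitali's theorem together with equi-integrability of $|\nabla u_n|^2$ lets me pass to the limit in the whole equation, yielding a solution $u\in H_0^1(\Omega)\cap L^{\infty}(\Omega)$ with $\alpha\leq u\leq\beta$. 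Finally, for $u_{\min}$ and $u_{\max}$ I would run a monotone iteration: starting from $u_0=\alpha$, at step $k$ solve the frozen problem in which the argument of $H$ is taken at $(u_k,\nabla u_k)$; the comparison argument above forces $\alpha\leq u_k\leq u_{k+1}\leq\beta$, the exponential test function provides uniform $H_0^1$ bounds, and the limit gives $u_{\min}$. Starting from $\beta$ and iterating downward in the symmetric way gives $u_{\max}$, and the minimality/maximality among solutions trapped in $[\alpha,\beta]$ follows by testing any such solution against $(u-u_{\min})^-$ or $(u_{\max}-u)^-$.
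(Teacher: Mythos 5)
The paper states this result as a direct citation of \cite{B_M_P_1988} and does not prove it, so there is no in-paper argument to compare against. Judged on its own, your plan has the right overall shape (truncate, regularize, confine, pass to the limit via a.e.\ convergence of gradients, then extract extremal solutions), but three of the individual steps, as you wrote them, do not go through.

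First, the truncation $\widetilde H$ you introduce is \emph{not} a Carath\'eodory function. For a.e.\ fixed $x$, the map $(s,\xi)\mapsto\widetilde H(x,s,\xi)$ jumps at $s=\beta(x)$ whenever $\xi\neq\nabla\beta(x)$ (and similarly at $s=\alpha(x)$): from below you approach $H(x,\beta(x),\xi)$ while from above the value is the frozen constant $H(x,\beta(x),\nabla\beta(x))$. The Carath\'eodory property is exactly what your Leray--Schauder/Schauder step for the bounded problems needs, so this is not a cosmetic point. The standard Carath\'eodory truncation only truncates the state, $\widehat H(x,s,\xi):=H\big(x,\min(\beta(x),\max(\alpha(x),s)),\xi\big)$, leaving the gradient argument alone; it has the same growth, but then the ``exact cancellation'' you invoke in the next step disappears.

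Second, the confinement $u_n\leq\beta$ you claim does not hold at the level of the regularized problem. On $\{u_n\geq\beta\}$, $\widetilde H_n$ equals $H(x,\beta,\nabla\beta)/\big(1+\tfrac1n|H(x,\beta,\nabla\beta)|\big)$, which is \emph{not} $H(x,\beta,\nabla\beta)$. After subtracting the upper-solution inequality and testing with $(u_n-\beta)^+$, the residual term has the sign of $-H(x,\beta,\nabla\beta)$, so wherever $H(x,\beta,\nabla\beta)<0$ it is a bad term and you cannot conclude $\int|\nabla(u_n-\beta)^+|^2\,dx\leq 0$. You either need to add a penalization term of the form $(u-\beta)^+-(\alpha-u)^+$ to the approximate problem (which restores a usable one-sided estimate), or you need to prove the confinement only for the limit $u$ after passing to the limit in the truncated (un-regularized) equation. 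But with $\xi\in L^1(\Omega)$ only, the uniform $L^\infty$ bound on $u_n$ that your scheme requires \emph{has} to come from $\alpha\leq u_n\leq\beta$; you cannot get it independently from the data, so this circularity must be broken by the penalization.

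Third, the iteration you propose for $u_{\min}$ and $u_{\max}$ is not monotone. Solving a linearized problem in which $H$ is evaluated at the previous iterate $(u_k,\nabla u_k)$ does not yield $u_k\leq u_{k+1}$, because $u_k\leq u_{k+1}$ carries no information about $\nabla u_k$ versus $\nabla u_{k+1}$, and $H$ depends crucially on the gradient. The classical mechanism is a directedness argument: the set $\mathcal S$ of solutions in $[\alpha,\beta]$ is nonempty, and for any two $u_1,u_2\in\mathcal S$ the function $\min(u_1,u_2)$ is an upper solution lying above $\alpha$, so there is a solution below both; hence $\mathcal S$ is downward directed. Combined with the uniform $H^1_0$ bound coming from the exponential test function, this produces a decreasing net converging to a solution which one then checks is the minimum. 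The maximum is symmetric. This is the route that actually proves the extremal solutions exist.
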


\begin{definition}
A \textit{lower solution} $\alpha \in \mathcal{C}^1(\overline{\Omega})$ is said to be \textit{strict}
if every solution $u$ to \eqref{eqP1-ch3} with $u\geq \alpha$ satisfies $u\gg\alpha$. Similarly, an \textit{upper solution} $\beta\in \mathcal{C}^1(\overline{\Omega})$  is said to be \textit{strict}
 if every solution $u$ to \eqref{eqP1-ch3} such that $u\leq \beta$ satisfies $u\ll\beta$.
\end{definition}

\medbreak
Now, we consider the boundary value problem
\begin{equation} \label{p2-ch3}
-\Delta v = f(x,v)\,, \quad v \in H_0^1(\Omega),
\end{equation}
being $f: \Omega \times \R \rightarrow \R$ an $L^p$-Carath\'eodory function for some $p > N$ (i.e. $f$ is a  Carath\'eodory function and, for every $R>0$, there exists $h_R$ with $f(x,s)\leq h_R(x)$, for a.e. $x\in \Omega$ and all $s\in  [-R,R]$)
such that
\[ |f(x,s)| \leq C |s|^{\frac{N+2}{N-2}} + d(x),\]
for some $C > 0$ and $d \in L^{\frac{2N}{N+2}}(\Omega)$. This problem can be handled variationally. 
We consider the associated functional $J: \H \rightarrow \R$ 
defined by
\[ 
J(v) :=  \frac{1}{2} \int_{\Omega} |\gradv|^2\,dx - \int_{\Omega}F(x,v)\,dx\,,
\quad \mbox{ where }
\quad 
\frac{\partial}{\partial s} F(x,s) = f(x,s),
\]
and we recall the following results.

\begin{prop}\rm \cite[Theorem 6]{DF_S_1984} \cite[Chapter 1, Theorem 2.4]{Struwe}  \label{minimumInterval-ch3}
\it Assume that $\alpha$ and $\beta$ are respectively a lower and an 
upper solution to \eqref{p2-ch3} with 
$\alpha \leq \beta$  and consider the set
\[ M:= \big\{ v \in \H: \alpha \leq v \leq \beta \big\}.\]
Then the infimum of $J$ on $M$ is achieved at some $v$, and such $v$ is a solution to \eqref{p2-ch3}.
\end{prop}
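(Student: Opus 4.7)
The plan splits naturally into two parts: attaining the infimum by the direct method, and then promoting a minimizer of $J$ restricted to $M$ into an actual critical point of $J$ on $\H$.

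For the attainment, I would first observe that $M$ is convex and strongly closed in $\H$, hence weakly closed. Every $v \in M$ satisfies $\|v\|_\infty \leq R := \max(\|\alpha\|_\infty, \|\beta\|_\infty)$, so the growth hypothesis on $f$ yields the pointwise bound $|F(x,v)| \leq C R^{2^*}/2^* + R\,d(x)$ up to additive constants. Hence $\int_\Omega F(x,v)\,dx$ is uniformly bounded on $M$, and $J$ is coercive and bounded below on $M$. A minimizing sequence $\{v_n\}$ is thus bounded in $\H$; extracting a weakly convergent subsequence $v_n \rightharpoonup v$, we get $v \in M$ by weak closedness. The kinetic term is weakly lower semicontinuous, and the uniform $L^\infty$ bound combined with a.e.\ convergence (Rellich) lets us pass to the limit in $\int F(x,v_n)\,dx$ by dominated convergence. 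Therefore $J(v) = \inf_M J$.

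The delicate point is to show that such a constrained minimizer $v$ actually solves the equation on all of $\H$. For an arbitrary test function $\varphi \in \H \cap L^\infty(\Omega)$ and small $\epsilon > 0$, I would introduce the truncated variation
\[
w_\epsilon := v + \epsilon\varphi - \phi_\epsilon^{+} + \phi_\epsilon^{-}, \quad \phi_\epsilon^{+} := (v + \epsilon\varphi - \beta)^{+}, \quad \phi_\epsilon^{-} := (\alpha - v - \epsilon\varphi)^{+},
\]
which belongs to $M$ and coincides with $v + \epsilon\varphi$ wherever the latter is already admissible. Minimality of $v$ on $M$, after computing $J(w_\epsilon) - J(v)$ and keeping first-order terms, yields
\[
\epsilon\,\langle J'(v), \varphi\rangle \;\geq\; \langle J'(v), \phi_\epsilon^{+}\rangle \;-\; \langle J'(v), \phi_\epsilon^{-}\rangle + o(\epsilon).
\]
The main obstacle is showing that both truncation terms on the right are themselves $o(\epsilon)$.

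This is precisely where the lower and upper solution hypotheses are used. Testing the upper-solution inequality for $\beta$ against the nonnegative $\phi_\epsilon^{+} \in \H \cap L^\infty(\Omega)$ gives $\int \nabla \beta \cdot \nabla \phi_\epsilon^{+}\,dx \geq \int f(x,\beta)\,\phi_\epsilon^{+}\,dx$. Subtracting this from $\langle J'(v), \phi_\epsilon^{+}\rangle = \int \nabla v \cdot \nabla \phi_\epsilon^{+}\,dx - \int f(x,v)\,\phi_\epsilon^{+}\,dx$, using $v \leq \beta$ on $\supp \phi_\epsilon^{+}$ and the fact that $\meas\{\phi_\epsilon^{+} > 0\} \to 0$ as $\epsilon \to 0^+$, one controls the residual gradient quadratic term and the $f$-difference to conclude $\langle J'(v), \phi_\epsilon^{+}\rangle \geq o(\epsilon)$. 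The symmetric argument with the lower-solution inequality for $\alpha$ gives $-\langle J'(v), \phi_\epsilon^{-}\rangle \geq o(\epsilon)$. Dividing by $\epsilon$ and letting $\epsilon \to 0^+$ yields $\langle J'(v), \varphi\rangle \geq 0$; replacing $\varphi$ by $-\varphi$ forces equality. Finally, density of $\H \cap L^\infty(\Omega)$ in $\H$ together with the continuity of $J'(v)$ on $\H$ under the stated growth bound on $f$ produce $J'(v) = 0$ in $H^{-1}(\Omega)$, so $v$ is indeed a weak solution to \eqref{p2-ch3}.
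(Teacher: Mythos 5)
Your proposal follows the standard route — the one the paper itself defers to by citing De Figueiredo--Solimini and Struwe rather than proving in-text — so the overall architecture is correct: direct method on the weakly closed, $L^\infty$-bounded convex set $M$, then a truncated one-sided variation to promote the constrained minimizer to a free critical point.

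There is, however, one genuine error in the justification of the truncation estimate. You assert that $\meas\{\phi_\epsilon^{+} > 0\} \to 0$ as $\epsilon \to 0^{+}$, and this is what you invoke to control the residual terms. That claim is false in general: the set $\{\phi_\epsilon^{+} > 0\} = \{v + \epsilon\varphi > \beta\}$ is decreasing in $\epsilon$, but it decreases to $\{v = \beta\}\cap\{\varphi > 0\}$, not to a null set, and a constrained minimizer may very well touch the upper obstacle $\beta$ on a set of positive measure. The argument still closes, but for a different reason: on $\{v = \beta\}$ one has $\nabla(v-\beta) = 0$ a.e.\ (Stampacchia), and trivially $f(x,\beta) - f(x,v) = 0$ there, so the integrands in the mixed gradient term and in the $f$-difference vanish identically on the limit set. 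Dominated convergence along the decreasing family $\{v + \epsilon\varphi > \beta\}$ then gives $\int_{\{\phi_\epsilon^{+}>0\}} \nabla(v-\beta)\cdot\nabla\varphi\,dx \to 0$ and $\int_{\{\phi_\epsilon^{+}>0\}} |f(x,\beta)-f(x,v)|\,dx \to 0$; combined with $0 \leq \phi_\epsilon^{+} \leq \epsilon\|\varphi\|_\infty$ and the sign of the nonnegative term $\int_{\{\phi_\epsilon^{+}>0\}} |\nabla(v-\beta)|^2\,dx$, this yields $\langle J'(v),\phi_\epsilon^{+}\rangle \geq o(\epsilon)$. The same correction applies symmetrically to $\phi_\epsilon^{-}$. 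You should also say explicitly that the first-order expansion $J(w_\epsilon)-J(v) = \langle J'(v), w_\epsilon - v\rangle + o(\|w_\epsilon - v\|)$ is legitimate because the subcritical growth bound on $f$ makes $J \in \mathcal{C}^1(H_0^1(\Omega))$; with these repairs the proof is complete and faithful to the cited references.
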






\begin{cor} \label{localMinimizer-ch3}
Assume that $\alpha$ and $\beta$ are strict lower and upper solutions to \eqref{p2-ch3} belonging to 
$\mathcal{C}^{1}(\overline{\Omega})$ and satisfying $\alpha \ll \beta$ and let $M$ be defined as in Proposition \ref{minimumInterval-ch3}. 
Then the minimizer $v$ of $J$ on $M$ is  a local minimizer of the functional $J$ in the $\mathcal{C}_0^1$-topology. 
Furthermore, this minimizer is a solution to  \eqref{p2-ch3} with $\alpha \ll v \ll \beta$.
\end{cor}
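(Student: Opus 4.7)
The plan is to reduce the statement to the existence of a $\mathcal{C}_0^1$-neighborhood of $v$ that lies entirely inside $M$. First, I would apply Proposition \ref{minimumInterval-ch3} to produce a minimizer $v\in M$ of $J$ which solves \eqref{p2-ch3}. Since $\alpha$ is a strict lower solution and $v$ is a solution with $v\ge \alpha$, the definition of strict lower solution yields $v\gg \alpha$; symmetrically $v\ll \beta$. Hence $\alpha\ll v\ll \beta$, with all three functions in $\mathcal{C}^1(\overline{\Omega})$.

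The core of the argument is now to show the existence of $\delta>0$ such that the $\mathcal{C}_0^1$-ball
\[
B_\delta(v):=\bigl\{w\in \mathcal{C}_0^1(\overline{\Omega}):\ \|w-v\|_{\mathcal{C}_0^1}<\delta\bigr\}
\]
is contained in $M$. To establish this I would split $\overline{\Omega}$ into two regions. On any compact subset $K\subset \overline{\Omega}$ that avoids the boundary points where $\alpha$ touches $v$, one has $v-\alpha\ge \eta_K>0$ by continuity, so $C^0$-closeness of $w$ to $v$ forces $w\ge \alpha$ on $K$. Near a boundary point $x_0\in \partial\Omega$ with $v(x_0)=\alpha(x_0)=0$, the definition of $\ll$ gives $\partial_\nu \alpha(x_0)>\partial_\nu v(x_0)$. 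Using the $\mathcal{C}^{0,1}$ regularity of $\partial\Omega$ and a parametrization of a tubular neighborhood by the inward normal, a Taylor expansion along $-\nu$ yields $\alpha(x_0-t\nu)=-t\,\partial_\nu\alpha(x_0)+o(t)$ and similarly for $w$. The uniform $\mathcal{C}^1$-closeness $\|\nabla w-\nabla v\|_\infty<\delta$ makes $\partial_\nu w$ as close as desired to $\partial_\nu v$, hence strictly below $\partial_\nu \alpha$ uniformly in $x_0$ by compactness of $\partial\Omega$ and continuity of the normal derivatives. This produces a uniform collar of $\partial\Omega$ in which $w\ge \alpha$. Pasting with the interior estimate gives $w\ge \alpha$ on $\overline{\Omega}$; exchanging roles of $(\alpha,v)$ with $(v,\beta)$ yields $w\le \beta$, so $w\in M$.

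Once $B_\delta(v)\subset M$ is established, the conclusion is immediate: $J(w)\ge J(v)$ for every $w\in B_\delta(v)$, so $v$ is a local minimizer of $J$ in the $\mathcal{C}_0^1$-topology. The strict inequalities $\alpha\ll v\ll \beta$ have already been recorded, completing the statement. The main obstacle I expect is the uniformity in the boundary argument: one must choose the collar width and the radius $\delta$ independently of $x_0\in \partial\Omega$, which is why I rely on the compactness of $\partial\Omega$, the $\mathcal{C}^1$-regularity of $\alpha,v,\beta$ up to the boundary, and the uniform control of the Taylor remainders. Everything else reduces to continuity and the definition of the $\ll$ relation.
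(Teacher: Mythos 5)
Your argument is essentially the same as the paper's: the paper simply defers to \cite[Corollary 2.6]{DC_F_2018}, which establishes, exactly as you do, that $\alpha \ll v \ll \beta$ with $\alpha,\beta \in \mathcal{C}^1(\overline{\Omega})$ implies the order interval $M$ contains a $\mathcal{C}_0^1$-neighborhood of $v$. The one step you gloss over, and which the paper's proof singles out, is that the minimizer produced by Proposition \ref{minimumInterval-ch3} lives a priori only in $H_0^1(\Omega)\cap L^\infty(\Omega)$; you need the $L^p$-Carath\'eodory hypothesis with $p>N$ and classical elliptic regularity (De Giorgi--Stampacchia plus $W^{2,p}$ theory) to conclude $v\in \mathcal{C}^1(\overline{\Omega})$, without which the relations $v\gg\alpha$, $v\ll\beta$ and the $\mathcal{C}_0^1$-ball argument are not even meaningful. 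A second, cosmetic, point: the Taylor expansion along the inward normal and the compactness argument on $\partial\Omega$ really use a $\mathcal{C}^1$ (in the paper, $\mathcal{C}^{1,1}$) boundary rather than merely $\mathcal{C}^{0,1}$, so you should invoke that regularity explicitly. With these two clarifications your proof is complete and matches the paper's intended route.
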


\begin{proof}
The proof follows as in \cite[Corollary 2.6]{DC_F_2018} using Proposition \ref{minimumInterval-ch3} and the fact that, as  $f$ is an $L^p$-Carath\'eodory function for some $p > N$, the classical regularity results imply that $v \in \mathcal{C}^{1}(\overline{\Omega})$.
\end{proof}

\begin{prop}\rm \cite[Theorem 8]{DF_S_1984} \cite[Theorem 1]{B_N_1993}  \label{c01vsWop-ch3}
\it Assume that there exist $h \in L^p(\Omega)$ for some $p > N$ and $\sigma \leq \frac{2^{\ast}(p-1)}{p}- 1$ such that
\[ |f(x,s)| \leq h(x) (1 + |s|^{\sigma}), \quad \textup{a.e. }x \in \Omega\,,\ \forall\ s \in \R\,,\]
and let $v \in \H$ be a local minimizer of 
$J$ for the $\mathcal{C}_0^1$-topology. Then $v \in \mathcal{C}_0^{1}(\overline{\Omega})$ and it is a local minimizer of $J$ in the $H_0^1$-topology.
\end{prop}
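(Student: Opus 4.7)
The plan is to split the proof into two parts: (i) show $v \in \mathcal{C}_0^1(\overline{\Omega})$ by elliptic regularity, and (ii) upgrade the local minimum property from the $\mathcal{C}_0^1$-topology to the $\H$-topology via a Brezis--Nirenberg style argument.

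For (i): being a local minimizer of $J$ in the $\mathcal{C}_0^1$-topology, $v$ is in particular a weak solution of $-\Delta v = f(x,v)$ in $\H$. The sharp bound $\sigma \leq \tfrac{2^{\ast}(p-1)}{p}-1$ is tailored precisely so that, starting from $v \in L^{2^*}(\Omega)$, a Moser/Brezis--Kato bootstrap applied to $-\Delta v = f(x,v)$ raises integrability in finitely many steps until $v \in L^\infty(\Omega)$. Once this is achieved, the growth hypothesis gives $f(\cdot,v) \in L^p(\Omega)$ with $p>N$, and Calder\'on--Zygmund theory combined with Morrey's embedding yield $v \in W^{2,p}(\Omega) \cap W_0^{1,p}(\Omega) \hookrightarrow \mathcal{C}_0^1(\overline{\Omega})$.

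For (ii): I argue by contradiction. If $v$ is not a local minimum in the $\H$-topology, then for every sufficiently small $\epsilon > 0$ the closed $\H$-ball $\overline{B}_\epsilon(v)$ contains points at which $J$ is strictly less than $J(v)$. The strict subcriticality $\sigma+1 < 2^*$ implies that $J$ is weakly sequentially lower semicontinuous on bounded subsets of $\H$, so the restricted minimization
\[
m_\epsilon := \inf\bigl\{J(u): u \in \H,\ \|u-v\|\leq \epsilon\bigr\}
\]
is attained at some $w_\epsilon$ satisfying $J(w_\epsilon)<J(v)$ and $\|w_\epsilon - v\| \leq \epsilon$. Either $\|w_\epsilon - v\| < \epsilon$, in which case $w_\epsilon$ is a free critical point solving $-\Delta w_\epsilon = f(x,w_\epsilon)$, or $\|w_\epsilon - v\| = \epsilon$, in which case the Lagrange multiplier rule yields some $t_\epsilon \leq 0$ such that
\[
-\Delta\bigl((1-t_\epsilon)w_\epsilon + t_\epsilon v\bigr) = f(x,w_\epsilon).
\]

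The final step is to show that $w_\epsilon \to v$ strongly in $\mathcal{C}_0^1(\overline{\Omega})$; this contradicts $J(w_\epsilon)<J(v)$ together with $v$ being a $\mathcal{C}_0^1$-local minimum. Strong $\H$-convergence $w_\epsilon \to v$ is immediate from $\|w_\epsilon -v\|\leq \epsilon$, so the real task is to run the bootstrap of step (i) \emph{uniformly in $\epsilon$} on the equation satisfied by $w_\epsilon$, producing uniform $L^\infty$ estimates and then uniform $W^{2,p}$ estimates; the compact embedding $W^{2,p}(\Omega) \hookrightarrow \mathcal{C}^1(\overline{\Omega})$ then forces $\mathcal{C}_0^1$-convergence along a subsequence, which is enough to reach the contradiction. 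The main obstacle is precisely this uniformity, together with the a priori control of the multiplier $t_\epsilon$ (which stays bounded because the constrained minimizers $w_\epsilon$ remain uniformly bounded in $\H$); this is the technical heart of the Brezis--Nirenberg argument.
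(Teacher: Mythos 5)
The paper does not prove this proposition itself; it is stated as a known result with citations to de Figueiredo--Solimini and to Brezis--Nirenberg's ``$H^1$ versus $C^1$ local minimizers''. Your sketch correctly reproduces the Brezis--Nirenberg argument (elliptic bootstrap to $\mathcal{C}_0^1$ regularity, then constrained minimization on small $H_0^1$-balls, a Lagrange multiplier when the minimizer sits on the sphere, and a uniform bootstrap to force $\mathcal{C}_0^1$-convergence of the constrained minimizers to $v$), which is exactly the route those references take, so there is nothing to compare against or correct.
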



\medbreak
\begin{remark} $ $
If $f$ is an $L^{\infty}$-Carath\'eodory function the result holds under the growth condition
\[ |f(x,s)| \leq C (1+|s|^{\sigma}), \quad \textup{a.e. }x \in \Omega\,,\ \forall\ s \in \R\,,\]
for some $C > 0$ and $\sigma \leq 2^{\ast}-1$. In that case, we are exactly in the framework of \cite{B_N_1993}.
\end{remark}

Now, let us recall some abstract results in order to find critical points of $J$ other than local minima.

\begin{definition} \label{PSc-ch3}
Let $ (X , \| \cdot \|) $  be a real Banach space with dual space $ (X^{\ast}, \| \cdot \|_{\ast}) $ and let 
$\Phi: X \rightarrow \R$ be a $\mathcal{C}^1$ functional. The functional $ \Phi $ satisfies the \textit{Palais-Smale 
condition at level $c \in \R$} if, for any \textit{Palais-Smale sequence} at level $c \in \R$, i.e. for any sequence 
$\{x_n\} \subset X$ with
\begin{equation*}
\begin{aligned}
&\Phi(x_n) \to c \quad \textup{ and } \quad \|\Phi'(x_n)\|_{\ast} \to 0\,,
\end{aligned}
\end{equation*}
there exists a subsequence $\{x_{n_k}\}$ strongly convergent in $X$.
\end{definition}

\begin{theorem}\rm \cite[Theorem 2.1]{A_R_1973} \label{mpTheorem-ch3}
\it Let $ (X , \| \cdot \|) $  be a real Banach space. Suppose that $\Phi: X \rightarrow \R$ is a $\mathcal{C}^1$ 
functional. Take two points $e_1$, $e_2 \in X$ and define
\[ \Gamma := \{ \varphi \in \mathcal{C}([0,1],X): \varphi(0) = e_1, \,\varphi(1) = e_2\}\,,\]
and
\[ c := \inf_{\varphi \in \Gamma} \max_{t \in [0,1]} \Phi(\varphi(t))\,.\]
Assume that $\Phi$ satisfies the Palais-Smale condition at level $c$ and that
\[ c > \max \{ \Phi(e_1),\Phi(e_2)\}\,.\]
Then, there is a critical point of $\Phi$ at level $c$, i.e. there exists 
$x_0 \in X$ such that $\Phi(x_0) = c$ and $\Phi'(x_0) = 0$.
\end{theorem}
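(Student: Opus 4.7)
The plan is to argue by contradiction via the classical deformation lemma, which is the standard route for the Ambrosetti--Rabinowitz mountain pass theorem. Suppose no critical point of $\Phi$ exists at level $c$. The first step is to convert the Palais--Smale condition into a quantitative lower bound on $\|\Phi'\|_*$ near the level $c$: I claim there exist $\bar\varepsilon, \delta > 0$ such that $\|\Phi'(x)\|_* \geq \delta$ for every $x \in X$ with $|\Phi(x) - c| \leq \bar\varepsilon$. Indeed, if this failed, one could pick $x_n$ with $\Phi(x_n) \to c$ and $\Phi'(x_n) \to 0$; by $(PS)_c$, a subsequence converges to some $x_0$ with $\Phi(x_0) = c$ and $\Phi'(x_0) = 0$, contrary to our assumption.

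Next, I would build a pseudo-gradient vector field $V$ on the open set $\tilde X := \{x \in X : \Phi'(x) \neq 0\}$. This is a locally Lipschitz map $V : \tilde X \to X$ satisfying $\|V(x)\| \leq 2\|\Phi'(x)\|_*$ and $\langle \Phi'(x), V(x) \rangle \geq \|\Phi'(x)\|_*^2$, constructed via a partition of unity on $\tilde X$ (this is necessary because $\Phi'$ is merely continuous, not locally Lipschitz). Using $V$ together with a Lipschitz cut-off function $\chi$ supported in the strip $\{|\Phi - c| \leq \bar\varepsilon\}$ and equal to $1$ on $\{|\Phi - c| \leq \bar\varepsilon/2\}$, I would define the deformation $\eta : [0,\infty) \times X \to X$ as the flow of
\[
\dot{\eta}(t,x) = -\chi(\eta(t,x))\, \frac{V(\eta(t,x))}{\|V(\eta(t,x))\|^2}\,.
\]
Standard ODE theory on Banach spaces (global existence follows from the fact that $\|\dot{\eta}\|$ is bounded) produces a continuous $\eta$ such that: (i) $\eta(t, \cdot)$ is a homeomorphism of $X$; (ii) $\eta(t, x) = x$ whenever $\Phi(x) \notin [c - \bar\varepsilon, c + \bar\varepsilon]$; and (iii) for $\varepsilon \in (0, \bar\varepsilon/2)$ sufficiently small, there exists $T > 0$ with $\eta(T, \Phi^{c+\varepsilon}) \subset \Phi^{c-\varepsilon}$, where $\Phi^a := \{\Phi \leq a\}$. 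The last claim follows because along the flow, as long as $\chi \equiv 1$, one has $\frac{d}{dt}\Phi(\eta(t,x)) \leq -1$, so the value strictly decreases at unit speed until leaving the strip.

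Finally, I would choose $\varepsilon$ small enough so that $2\varepsilon < c - \max\{\Phi(e_1), \Phi(e_2)\}$, which is possible by the hypothesis $c > \max\{\Phi(e_1), \Phi(e_2)\}$. By definition of $c$, pick a path $\varphi \in \Gamma$ with $\max_{t \in [0,1]} \Phi(\varphi(t)) < c + \varepsilon$. Consider the deformed path $\tilde\varphi(t) := \eta(T, \varphi(t))$. Since $\Phi(e_1), \Phi(e_2) < c - \bar\varepsilon$, property (ii) gives $\tilde\varphi(0) = e_1$ and $\tilde\varphi(1) = e_2$, so $\tilde\varphi \in \Gamma$. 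But property (iii) yields $\max_{t}\Phi(\tilde\varphi(t)) \leq c - \varepsilon$, contradicting the definition of $c$ as an infimum over $\Gamma$.

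The main technical obstacle is the construction of the pseudo-gradient field and the verification that its flow yields the quantitative descent in property (iii); once these are in place, the contradiction argument is essentially a one-line pigeonhole on the min-max value. An alternative route avoiding the explicit flow is to apply Ekeland's variational principle to the functional $\varphi \mapsto \max_t \Phi(\varphi(t))$ on $\Gamma$ (equipped with the uniform metric) to directly produce an almost-minimizing path whose maximum point yields a Palais--Smale sequence at level $c$; invoking $(PS)_c$ then extracts the critical point. I would lean toward the deformation approach, since it is the classical proof and the pseudo-gradient machinery is cleanly self-contained.
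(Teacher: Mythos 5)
The paper does not prove this statement; it is quoted as a black box from Ambrosetti--Rabinowitz \cite{A_R_1973}, so there is no in-paper argument to compare against. Your sketch is the standard and correct deformation-lemma proof of the mountain pass theorem, and structurally it is sound, but there are two small slips worth fixing. First, with the flow $\dot\eta = -\chi(\eta)\, V(\eta)/\|V(\eta)\|^2$ and the pseudo-gradient bounds $\|V\| \le 2\|\Phi'\|_*$, $\langle \Phi', V\rangle \ge \|\Phi'\|_*^2$, the descent on the region where $\chi \equiv 1$ satisfies $\frac{d}{dt}\Phi(\eta) = -\langle \Phi'(\eta), V(\eta)\rangle/\|V(\eta)\|^2 \le -\tfrac14$, not $-1$; this only changes the choice of $T$, but "unit speed" is not accurate with your normalization. (Global existence of the flow does hold, because $\langle\Phi',V\rangle \ge \|\Phi'\|_*^2$ together with Cauchy--Schwarz forces $\|V\| \ge \|\Phi'\|_* \ge \delta$ on the support of $\chi$, hence $\|\dot\eta\| \le 1/\delta$; it would be worth stating this lower bound explicitly, since your assertion that "$\|\dot\eta\|$ is bounded" is what the whole global-existence step rests on.) Second, in the final contradiction you assume $\Phi(e_1), \Phi(e_2) < c - \bar\varepsilon$, but $\bar\varepsilon$ was chosen earlier from the quantitative $(PS)_c$ bound with no relation to $c - \max\{\Phi(e_1),\Phi(e_2)\}$; you should shrink $\bar\varepsilon$ (harmless, since the gradient lower bound on the strip persists under shrinking) so that $\bar\varepsilon < c - \max\{\Phi(e_1),\Phi(e_2)\}$ before picking $\varepsilon < \bar\varepsilon/2$. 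With these repairs the argument closes cleanly; the Ekeland alternative you mention is likewise a valid and well-known route.
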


\begin{theorem}\rm \cite[Corollary 1.6]{G_1993} \label{characterizacionMinimizer-ch3}
\it Let $ (X , \| \cdot \|) $  be a real Banach space and let $\Phi: X \rightarrow \R$ be a $\mathcal{C}^{1}$ functional. 
Suppose that $u_0 \in X$ is a local minimum, i.e. there exists $\epsilon > 0$ such that
\[ \Phi(u_0) \leq \Phi(u), \quad \textup{ for } \|u-u_0\| \leq \epsilon\,,\]
and assume that $\Phi$ satisfies the Palais-Smale condition at any level $d \in \R$. Then:
\begin{itemize}
\item[i)] either there exists $0 < \gamma < \epsilon$ such that 
$ \inf \{\Phi(u): \|u-u_0\| = \gamma \} > \Phi(u_0),$
\item[ii)] or, for each $0 < \gamma < \epsilon$, $\Phi$ has a local minimum at a point $u_{\gamma}$ with 
$\|u_{\gamma}-u_0\| = \gamma$ and $\Phi(u_{\gamma}) = \Phi(u_0)$.
\end{itemize}
\end{theorem}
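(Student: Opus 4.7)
Set $c=\Phi(u_0)$ and, for $\rho>0$, write $B_\rho:=\{u\in X:\|u-u_0\|<\rho\}$ and $S_\rho:=\{u\in X:\|u-u_0\|=\rho\}$. The hypothesis says $\Phi\geq c$ on $\overline{B_\epsilon}$. My plan is to prove the contrapositive of the dichotomy: assuming (i) fails, I will verify (ii). The first key observation that greatly simplifies matters is the following. If $w\in B_\epsilon$ satisfies $\Phi(w)=c$, then $w$ is automatically a local minimum of $\Phi$: pick $\rho>0$ with $B_\rho(w)\subset B_\epsilon$ so that $\Phi\geq c=\Phi(w)$ on $B_\rho(w)$. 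Consequently, establishing (ii) for a given $\gamma\in(0,\epsilon)$ reduces to showing that the infimum of $\Phi$ on the sphere $S_\gamma$ is \emph{attained} at some point of $S_\gamma$.

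Under the negation of (i), for every $\gamma\in(0,\epsilon)$ there is $v\in S_\gamma$ with $\Phi(v)\leq c$; combined with $\Phi\geq c$ on $\overline{B_\epsilon}$, this forces $\inf_{S_\gamma}\Phi=c$. Fix $\gamma\in(0,\epsilon)$ and choose sequences $\gamma_1^n\nearrow\gamma$ and $\gamma_2^n\searrow\gamma$ with $0<\gamma_1^n<\gamma<\gamma_2^n<\epsilon$. On each closed annular shell
\[
A_n:=\{u\in X:\gamma_1^n\leq\|u-u_0\|\leq\gamma_2^n\}
\]
we have $\inf_{A_n}\Phi=c$, since $\Phi\geq c$ on $A_n\subset\overline{B_\epsilon}$ and the infimum is already attained as a limit on each of the bounding spheres. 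Applying Ekeland's variational principle on the complete metric space $A_n$ produces a sequence $(w_n)$ with $w_n\in A_n$, $\Phi(w_n)\to c$, and the minimizing inequality $\Phi(v)\geq\Phi(w_n)-\tfrac{1}{n}\|v-w_n\|$ for every $v\in A_n$. When $w_n$ lies in the interior of $A_n$ one can test against every sufficiently small perturbation and so obtain $\|\Phi'(w_n)\|_*\leq 1/n$. The Palais-Smale condition at level $c$ then extracts a subsequence converging strongly to some $w$ with $\Phi(w)=c$ and $\Phi'(w)=0$. Since $w_n\in A_n$ and $A_n$ shrinks to $S_\gamma$, one has $\|w-u_0\|=\gamma$, i.e.\ $w\in S_\gamma$. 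By the first paragraph, $w$ is a local minimum, and setting $u_\gamma:=w$ yields (ii).

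The main delicate point is Step~2's requirement that the Ekeland points $w_n$ actually lie in the interior of the shell $A_n$, rather than on its inner or outer boundary where the minimizing inequality cannot be converted into a gradient bound. One resolution is to insist on $\Phi(w_n)<c+\delta_n$ with $\delta_n$ chosen so small that any boundary point would force a nearby interior point of strictly smaller $\Phi$-value (using $\inf_{A_n}=\inf_{\text{int}A_n}=c$); alternatively one can argue by contradiction via a pseudo-gradient deformation, showing that if no PS sequence entered the interior of $A_n$ at level $c$, then the flow could push $\Phi$ strictly below $c$ on points of $A_n$, contradicting $\Phi\geq c$ on $\overline{B_\epsilon}$. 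Either variant is classical, and once it is in place the PS hypothesis takes over to close the argument.
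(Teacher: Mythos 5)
Your overall strategy is sound and matches the standard argument in spirit: pass to the contrapositive, note that any $w\in B_\epsilon$ with $\Phi(w)=\Phi(u_0)=:c$ is automatically a local minimum, and then use Ekeland plus the Palais--Smale condition to show that $\inf_{S_\gamma}\Phi=c$ is \emph{attained} on $S_\gamma$. The paper itself does not prove this result (it cites Ghoussoub), so I compare against the standard proof.

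The concrete implementation has a real gap, which you flag yourself but do not close. Applying Ekeland on the shrinking annuli $A_n$ produces points $w_n$ that may sit on the inner or outer bounding sphere, where the variational inequality only constrains one-sided derivatives in directions pointing into $A_n$ and therefore does not yield $\|\Phi'(w_n)\|_*\leq 1/n$. Your first proposed repair (``choose $\delta_n$ so small that any boundary point would force a nearby interior point of strictly smaller $\Phi$-value'') does not hold up: $\Phi\geq c$ everywhere on $\overline{B_\epsilon}$, so there are no points of value strictly below $c$, and nothing in Ekeland's conclusion forces an interior point with value below $\Phi(w_n)$. Your second repair (a pseudo-gradient deformation) is a genuinely different technique and is only sketched; making it rigorous requires controlling the flow at the boundary of $A_n$, which you do not address. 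As written, the proof is incomplete.

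The clean fix is to avoid the annuli altogether and exploit the proximity estimate in Ekeland's principle, applied on the whole closed ball. Fix $\gamma\in(0,\epsilon)$. Since (i) fails, choose $v_n\in S_\gamma$ with $\Phi(v_n)<c+1/n^2$, and recall $\inf_{\overline{B_\epsilon}}\Phi=c$. Ekeland's principle on the complete metric space $\overline{B_\epsilon}$, with parameters $\varepsilon=1/n^2$ and $\lambda=1/n$, produces $w_n\in\overline{B_\epsilon}$ with
\[
\Phi(w_n)\leq\Phi(v_n),\qquad \|w_n-v_n\|\leq\tfrac1n,\qquad \Phi(v)\geq\Phi(w_n)-\tfrac1n\|v-w_n\|\ \ \forall\,v\in\overline{B_\epsilon}.
\]
The middle estimate gives $\gamma-\tfrac1n\leq\|w_n-u_0\|\leq\gamma+\tfrac1n$, so for $n$ large $w_n$ lies in the \emph{interior} of $\overline{B_\epsilon}$; the third inequality then yields $\|\Phi'(w_n)\|_*\leq 1/n$. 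Thus $(w_n)$ is a Palais--Smale sequence at level $c$, a subsequence converges to some $w$ with $\Phi(w)=c$ and $\|w-u_0\|=\gamma$, and your opening observation makes $w$ the desired local minimum on $S_\gamma$. This avoids the boundary issue entirely because it is the initial sequence $v_n\in S_\gamma$, together with the Ekeland distance bound, that locates $w_n$ strictly inside the ball and forces the limit onto the sphere.
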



Another key ingredient in the proofs of Theorems \ref{th2-ch3}, \ref{th3-ch3} and \ref{th-h-0-ch3} is the following result that can be seen as a combination of the Strong maximum principle and the Hopf's Lemma with unbounded lower order coefficients. This can be obtained as a Corollary of \cite[Theorem 4.1]{rosales_2019}. Nevertheless, for the benefit of the reader, we provide a self-contained 
proof adapted to our setting in Appendix \ref{Appendix-ch3}. Under the assumption
\begin{equation} \label{hypSMP-ch3}
\left\{
\begin{aligned}
& \Omega \subset \RN,\ N \geq 2, \textup{ is a bounded domain with } \partial \Omega \textup{ of class } \mathcal{C}^{1,1},\\
& a  \textup{ belongs to } L^p(\Omega) \textup{ and } B = (B^1, \ldots, B^N) \textup{ belongs to }  (L^{p}(\Omega))^N \textup{ for some } p > N, \\
& a \geq 0,
\end{aligned}
\right.
\end{equation}
we obtain the following result.

\begin{theorem}
 \label{SMP-ch3}  
Assume  \eqref{hypSMP-ch3} and let $u \in \mathcal{C}^1(\overline{\Omega})$ be an upper solution to 
\begin{equation*}
-\Delta u + \langle B(x), \gradu \rangle + a(x) u = 0, \quad u \in H_0^1(\Omega).
\end{equation*}
Then, either $u \equiv 0$ or $u \gg 0$.
\end{theorem}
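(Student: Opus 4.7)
I would prove the statement in three classical steps: (i) show $u\ge 0$ in $\Omega$; (ii) show that if moreover $u\not\equiv 0$ then $u>0$ everywhere in $\Omega$; (iii) at each $x_0\in\partial\Omega$ where $u$ vanishes, show $\partial u/\partial\nu(x_0)<0$. The main novelty compared with the textbook setting is that $a$ and $B$ are only in $L^p$ with $p>N$, so the usual barrier/Kato arguments have to be carried out in an integral rather than pointwise way.

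\textit{Step 1 (Weak maximum principle).} Since $u\in\mathcal{C}^1(\overline{\Omega})$ is an upper solution with $u^-\in H_0^1(\Omega)$, the test function $\varphi=u^-\in H_0^1(\Omega)\cap L^\infty(\Omega)$ is admissible in Definition~\ref{lowerUpperSolution-ch3}. Using $a\ge 0$ one obtains
\[
\int_\Omega |\nabla u^-|^2\,dx + \int_\Omega a(u^-)^2\,dx \;\leq\; \int_\Omega |B|\,|\nabla u^-|\,u^-\,dx.
\]
Set $q:=2p/(p-2)$; since $p>N$ one checks $q<2^*$, so H\"older together with $H_0^1\hookrightarrow L^q$ yields $\|u^-\|_{L^q}\le C\|\nabla u^-\|_{L^2}$, hence the right-hand side is bounded by $C\|B\|_p\|\nabla u^-\|_2^2$. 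For arbitrary $\|B\|_p$, I would close the argument with a Stampacchia-type truncation: replacing $\varphi=u^-$ by $(u^--k)^+$ and using on the level set $A_k:=\{u^->k\}$ the bound $\|u^--k\|_{L^q(A_k)}\le C|A_k|^{(p-N)/(pN)}\|\nabla u^-\|_{L^2(A_k)}$, which provides the smallness needed to absorb the $B$-term for $k$ large, and then iterating down to $k=0$ yields $u^-\equiv 0$.

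\textit{Step 2 (Strong maximum principle).} Assume $u\not\equiv 0$ and, for contradiction, that $\Omega^+:=\{u>0\}\subsetneq\Omega$. Pick a largest open ball $B_\rho(y)\subset\Omega^+$; by continuity there exists $z\in\partial B_\rho(y)\cap\Omega$ with $u(z)=0$. On the annulus $A:=B_\rho(y)\setminus\overline{B_{\rho/2}(y)}$ introduce the standard barrier
\[
w(x) \;=\; \varepsilon\bigl(e^{-\alpha|x-y|^2}-e^{-\alpha\rho^2}\bigr),
\]
with $\alpha$ large and $\varepsilon$ small enough that $w\le u$ on $\partial B_{\rho/2}(y)$ (where $u$ has a strictly positive minimum) and $w=0\le u$ on $\partial B_\rho(y)$. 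To establish $w\le u$ in $A$ I would test the difference between the upper-solution inequality for $u$ and the weak inequality for $w$ against $(w-u)^+\in H_0^1(A)\cap L^\infty(A)$, and proceed exactly as in Step~1 to conclude $(w-u)^+\equiv 0$. At the contact point $z$, $u$ attains an interior minimum so $\nabla u(z)=0$, while $\partial w/\partial\nu(z)=-2\alpha\varepsilon\rho\,e^{-\alpha\rho^2}<0$: a contradiction.

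\textit{Step 3 (Hopf's lemma) and main obstacle.} If $u\not\equiv 0$, then by Step~2 $u>0$ in $\Omega$. Given $x_0\in\partial\Omega$ with $u(x_0)=0$, the $\mathcal{C}^{1,1}$ regularity of $\partial\Omega$ furnishes an interior tangent ball $B_R(y)\subset\Omega$ with $x_0\in\partial B_R(y)$; repeating the barrier argument of Step~2 on $B_R(y)\setminus\overline{B_{R/2}(y)}$ gives $u\ge w$ there, whence $\partial u/\partial\nu(x_0)\le\partial w/\partial\nu(x_0)<0$, i.e.\ $u\gg 0$. The principal obstacle throughout is that, with $B$ merely in $L^p$, the classical barrier $w$ does not satisfy the pointwise subsolution inequality $-\Delta w+\langle B,\nabla w\rangle+aw\le 0$. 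The fix is to work integrally: test against $(w-u)^+$ and absorb the $B$-contribution through the H\"older–Sobolev estimate of Step~1, where the strict inequality $p>N$ (giving the gain exponent $(p-N)/(pN)>0$) together with the absolute continuity of $\|B\|_{L^p}$ on small sets is essential.
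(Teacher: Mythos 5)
Your Step 1 is fine: testing the upper-solution inequality with $u^-$, combined with H\"older--Sobolev and a Stampacchia truncation that exploits $p>N$, does give $u\ge 0$; this reproduces the weak maximum principle cited by the paper (\cite[Proposition 3.10]{K_S_2019}). The trouble starts in Step 2. You introduce the classical exponential barrier $w=\varepsilon(e^{-\alpha r^2}-e^{-\alpha\rho^2})$ and propose to establish $w\le u$ by testing ``the difference between the upper-solution inequality for $u$ and the weak inequality for $w$'' against $\phi=(w-u)^+$ ``and proceed exactly as in Step~1''. But there is no ``weak inequality for $w$'' available. The point of picking $\alpha$ large in the classical argument is precisely to force the pointwise inequality $-\Delta w+\langle B,\nabla w\rangle+aw\le 0$, and that fails here: with $B,a$ merely in $L^p$, no choice of $\alpha$ makes $w$ a (weak or pointwise) lower solution, since the required inequality $2\alpha\rho|B(x)|+a(x)\le\alpha^2\rho^2-2\alpha N$ is violated wherever $a$ (or $|B|$) is large. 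If you nonetheless carry out the integration by parts and test with $\phi=(w-u)^+$, what you get is
\[
\int_A|\nabla\phi|^2+\int_A\langle B,\nabla\phi\rangle\phi+\int_A a\,\phi^2
\;\le\;
\int_A\bigl(-\Delta w+\langle B,\nabla w\rangle+aw\bigr)\phi ,
\]
and the right-hand side is not $\le 0$, nor can it be absorbed: the term $\int_A aw\phi$ is nonnegative, and the terms $\int_A\langle B,\nabla w\rangle\phi$ and $\int_A aw\phi$ are \emph{linear} in $\phi$, so the quadratic good terms on the left cannot dominate them near $\phi\equiv0$. The absolute continuity of $\|B\|_{L^p}$ on small sets and the exponent gain $(p-N)/(pN)$, which you invoke, help you make the \emph{coefficient} of the linear term small, but a small linear term still beats a quadratic term for small $\|\nabla\phi\|_{L^2}$, so you cannot conclude $\phi\equiv 0$. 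The same obstruction reappears verbatim in Step~3. This is a genuine gap, not a technicality.

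The paper avoids this entirely by never using an explicit barrier: on the reference annulus $\omega=B_1(0)\setminus\overline{B_{1/2}(0)}$ it \emph{solves} the Dirichlet problem \eqref{Pepsilon-ch3} with the \emph{rescaled} coefficients $B_\epsilon(y)=\epsilon B(\epsilon(y-x_0)+x_0)$ and $a_\epsilon(y)=\epsilon^2 a(\epsilon(y-x_0)+x_0)$ (Corollary \ref{SMP-corollary2-ch3}). Because $p>N$, Lemma \ref{SMP-lemma4-ch3} gives $\|B_\epsilon\|_{L^p}\le\epsilon^{1-N/p}\|B\|_{L^p}\to 0$ and $\|a_\epsilon\|_{L^p}\le\epsilon^{2-N/p}\|a\|_{L^p}\to 0$, and the uniform $W^{2,p}\hookrightarrow C^{1}$ bound of Lemma \ref{SMP-lemma5-ch3} then shows that the exact barrier $\varphi_\epsilon$ converges in $C^1(\overline\omega)$ to the harmonic barrier $\varphi$ solving \eqref{hl1-1-ch3}, whose normal derivative at $x_0$ is classical to compute. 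Hence for $\epsilon$ small $\partial\varphi_\epsilon/\partial\nu(x_0)<0$, and the comparison $u_\epsilon\ge\theta_\epsilon\varphi_\epsilon$ is then an application of the weak maximum principle between an upper solution and an exact solution of the same operator, which is precisely what your Step~1 provides. The moral is that the rescaling is not an optional simplification: it is what replaces the impossible pointwise subsolution inequality for the explicit barrier, by making the perturbation of the Laplacian small enough that the normal derivative of the genuine barrier can be controlled.
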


\begin{remark} $ $
The case where $B \in (L^{\infty}(\Omega))^N$ and $a \in \Linfty$ is nowadays classical and can be found for instance in \cite[Theorem 3.27]{T_1987}.
\end{remark}

\medbreak

We also need the following maximum and anti-maximum principles for non-selfadjoint second order operators with unbounded lower order coefficients. 

\begin{prop} \label{prop-maximum-anti-maximum-ch3}
Assume \eqref{hypSMP-ch3} and let $m\in L^p(\Omega)$ with $m\gneqq 0$.
Then, the problem
\begin{equation} \label{eq-maxim-anti-maxim-1}
-\Delta u + \langle B(x), \gradu \rangle + a(x) u = \gamma m(x) u, \quad u \in H_0^1(\Omega),
\end{equation}
has a unique principal eigenvalue $\gamma_1 > 0$ with corresponding eigenfunction $\varphi_1\gg 0$. Also, let $h\in L^p(\Omega)$ with $h\gneqq 0$. For the non-homogeneous problem
\begin{equation} \label{eq-maxim-anti-maxim-2}
-\Delta u + \langle B(x), \gradu \rangle + a(x) u = \gamma m(x) u + h(x), \quad u \in H_0^1(\Omega),
\end{equation}
 it follows that:
\begin{itemize}
\item[$\bullet$] For all $\gamma \in\, ]-\infty, \gamma_1[$, the solution $w$ to \eqref{eq-maxim-anti-maxim-2} satisfies $w \gg 0$.
\item[$\bullet$] For $\gamma = \gamma_1$ the problem \eqref{eq-maxim-anti-maxim-2} has no solution.
\item[$\bullet$] There exists $\delta = \delta(h) > 0$ such that, for all $\gamma \in\,  ]\gamma_1, \gamma_1 + \delta[$, the solution $w$ to \eqref{eq-maxim-anti-maxim-2} satisfies $w \ll 0$.
\end{itemize}
\end{prop}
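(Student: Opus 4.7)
The plan is to reduce everything to the spectral theory of a compact positive operator via Krein--Rutman. Under \eqref{hypSMP-ch3}, the operator $Lu := -\Delta u + \langle B, \nabla u\rangle + a u$ has trivial kernel on $H_0^1(\Omega)$: apply Theorem \ref{SMP-ch3} to any $u\in\ker L$ and to $-u$. Since $L$ is a compact perturbation of $-\Delta$ from $H_0^1(\Omega)$ into $H^{-1}(\Omega)$ when $p>N$, Fredholm theory makes $L$ an isomorphism. The operator $Tu := L^{-1}(mu)$ then maps $\mathcal{C}_0^1(\overline\Omega)$ compactly into itself by $W^{2,p}$-regularity, and is strongly positive with respect to the natural cone: if $u\gneqq 0$ then $L(Tu) = mu \gneqq 0$, so Theorem \ref{SMP-ch3} gives $Tu\gg 0$. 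Krein--Rutman then yields a simple, isolated principal eigenvalue $\mu_1 = 1/\gamma_1 > 0$ of $T$ with eigenfunction $\varphi_1\gg 0$; the same argument applied to the formal adjoint $L^*$ (whose lower order terms remain in $L^p$) produces $\varphi_1^*\gg 0$ satisfying $L^*\varphi_1^* = \gamma_1 m\varphi_1^*$.

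For the maximum principle at $\gamma < \gamma_1$, simplicity of $\gamma_1$ ensures $L - \gamma m$ is invertible, so the solution $w_\gamma$ is unique. When $\gamma \leq 0$ the zero-order coefficient $a - \gamma m$ is nonnegative, so Theorem \ref{SMP-ch3} applied to the corresponding operator yields $w_\gamma \gg 0$. The conclusion propagates to $\gamma \in (0,\gamma_1)$ by a connectedness argument: $\gamma \mapsto w_\gamma$ is continuous in $\mathcal{C}_0^1(\overline\Omega)$ and the set $A := \{\gamma < \gamma_1 : w_\gamma \gg 0\}$ is open in the $\mathcal{C}_0^1$-topology. For closedness, if $\gamma_n\to\gamma^*$ with $\gamma_n\in A$, the limit $w_{\gamma^*}\geq 0$ satisfies $L w_{\gamma^*} = h + \gamma^* m w_{\gamma^*} \gneqq 0$ (using $h\gneqq 0$ when $\gamma^*\geq 0$; one reabsorbs the term into the operator when $\gamma^*<0$), and Theorem \ref{SMP-ch3} forces $w_{\gamma^*}\equiv 0$ or $w_{\gamma^*}\gg 0$, the former contradicting $h\gneqq 0$. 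Non-existence at $\gamma = \gamma_1$ is then immediate: pairing $(L-\gamma_1 m)w = h$ with $\varphi_1^*$ and integrating by parts yields $0 = \int_\Omega h\,\varphi_1^*\,dx > 0$, impossible.

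For the anti-maximum principle I use a Lyapunov--Schmidt decomposition at the simple eigenvalue $\mu_1$. Write $\mathcal{C}_0^1(\overline\Omega) = \operatorname{span}(\varphi_1) \oplus X_2$ for the $T$-invariant splitting, and any $w = t\varphi_1 + \psi$ with $\psi\in X_2$. Pairing $(L - \gamma m)w = h$ against $\varphi_1^*$ yields
\[
t = \frac{\int_\Omega h\,\varphi_1^*\,dx}{(\gamma_1 - \gamma)\int_\Omega \varphi_1\, m\, \varphi_1^*\,dx},
\]
so $t\to -\infty$ as $\gamma\to\gamma_1^+$. The remainder solves $(L - \gamma m)\psi = h - t(\gamma_1 - \gamma) m\varphi_1$ with right-hand side uniformly bounded in $L^p(\Omega)$ near $\gamma_1$ and lying in the spectral complement of $\varphi_1^*$. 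Because $\mu_1$ is isolated, $(L-\gamma m)^{-1}|_{X_2}$ has uniformly bounded norm as $\gamma\to\gamma_1$, hence $\psi$ stays bounded in $\mathcal{C}_0^1(\overline\Omega)$ by $W^{2,p}$-regularity. Normalizing, $w/|t| \to -\varphi_1 \ll 0$ in $\mathcal{C}_0^1$, which is an open condition, so $w\ll 0$ for $\gamma$ in a right neighborhood of $\gamma_1$, giving $\delta = \delta(h) > 0$.

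The main obstacle is the uniform bound on the restricted resolvent near $\gamma_1$ needed for the anti-maximum principle: constructing the $T$-invariant decomposition and verifying that both the spectral projection and the inverse on $X_2$ remain stable under the perturbation hinges on the simplicity of $\mu_1$ provided by Krein--Rutman combined with the compactness of $T$ established in the first step. The existence of a positive adjoint eigenfunction $\varphi_1^*\gg 0$ in $\mathcal{C}_0^1(\overline\Omega)$ also deserves care; it is handled by noting that after integration by parts the adjoint $L^*$ has the same form with $-B$ in place of $B$ and a modified zero-order term still in $L^p$, so Theorem \ref{SMP-ch3} and the above Krein--Rutman argument apply verbatim.
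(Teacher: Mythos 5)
The central step of your construction is the claim that $T u := L^{-1}(m u)$ is strongly positive on $\mathcal{C}_0^1(\overline\Omega)$: you argue that $u\gneqq 0$ implies $mu\gneqq 0$ and then invoke Theorem~\ref{SMP-ch3}. This implication fails in general, because the hypothesis is only $m\gneqq 0$, which allows $m$ to vanish on a set of positive measure. Choosing $u\gneqq 0$ supported where $m\equiv 0$ gives $mu\equiv 0$, hence $Tu\equiv 0$, so $T$ is merely positive, not strongly positive. Without strong positivity, Krein--Rutman does not deliver simplicity, isolatedness, or uniqueness of the principal eigenvalue --- and every subsequent step (the isolated simple $\mu_1$, the strongly positive adjoint eigenvector, the invertibility of $(L-\gamma m)$ for $\gamma<\gamma_1$, the Lyapunov--Schmidt splitting) relies precisely on those refined conclusions. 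The paper sidesteps this by replacing $m$ with $\overline m=\max\{m,1\}\geq 1$ on the right of a shifted eigenvalue problem
\[
-\Delta u + \langle B,\nabla u\rangle + \bigl(a-\gamma m+\gamma^+\overline m\bigr)u = \mu\,\overline m\, u,
\]
where the zero-order coefficient stays nonnegative; then $\overline m\, u\gneqq 0$ whenever $u\gneqq 0$, so the solution operator $K_\gamma$ \emph{is} strongly positive, Krein--Rutman applies cleanly, and $\gamma_1$ is recovered as the unique zero of the concave function $\gamma\mapsto\mu_1(\gamma)$.

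A secondary gap: you obtain $\varphi_1^*\gg 0$ by ``applying the same argument to the formal adjoint $L^*$,'' asserting its lower-order terms remain in $L^p$. But in non-divergence form $L^*u=-\Delta u-\langle B,\nabla u\rangle-(\operatorname{div}B)u+au$, and $\operatorname{div}B$ need not be a function, let alone in $L^p$, when $B\in(L^p)^N$ only. The $W^{2,p}$-regularity and the $\mathcal{C}_0^1$-version of the Hopf lemma you lean on are therefore not available for $L^*$ verbatim; the duality pairing must be handled in weak (divergence) form, and the positivity of the adjoint eigenfunctional requires a separate argument. Parts of your outline are salvageable and appealing --- the connectedness argument for $\gamma<\gamma_1$ after observing the case $\gamma\le 0$ is direct, and the Lyapunov--Schmidt expansion near $\gamma_1$ is the standard engine for the anti-maximum principle --- but both presuppose the spectral structure that your strong positivity claim fails to establish. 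Incorporating the $\overline m$-normalization at the start would repair the proof.
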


\begin{proof}
Let us define $\overline{m}=\max\{m,1\}$ and consider the eigenvalue problem
\begin{equation} \label{eq-EVb}
-\Delta u + \langle B(x), \gradu \rangle + a(x) u-\gamma m(x) u = \mu \overline{m}(x) u, \quad u \in H_0^1(\Omega).
\end{equation}
It is clear that $\gamma$ is an eigenvalue of \eqref{eq-maxim-anti-maxim-1} if and only if $\mu=0$ is an eigenvalue of \eqref{eq-EVb}.
Note also that $a -\gamma m+\gamma^+\overline{m}\geq 0$. Then, we consider the operator
$$
K_{\gamma}: {\mathcal C}^1_0(\overline\Omega) \to {\mathcal C}^1_0(\overline\Omega): u \mapsto v
$$
where $v$ is the unique solution to
$$
-\Delta v + \langle B(x), \gradv \rangle + (a(x) -\gamma m(x) + \gamma^+ \overline{m}(x)) v =  \overline{m}(x) u, \quad v \in H_0^1(\Omega).
$$
By the compact embedding from $W^{2,p}(\Omega)\to {\mathcal C}^1_0(\overline\Omega)$, the choice of $\overline{m}$ and Theorem \ref{SMP-ch3}, we observe that $K_{\gamma}$ is compact and strongly positive (i.e. $u\gneqq 0 \Rightarrow K_{\gamma} u \gg 0$). Hence, applying the Krein-Rutman Theorem (see for instance \cite[Theorem 7.C and Corollary 7.27]{Zeidler}), we prove the existence of a unique eigenvalue $\mu_1(\gamma)$ of \eqref{eq-EVb} with eigenfunction $\varphi\gg 0$ 
and, for $h\in L^p(\Omega)$ with $p>N$ and $h\gneqq 0$, that the non-homogeneous problem 
$$
-\Delta u + \langle B(x), \gradu \rangle + a(x) u-\gamma m(x) u = r \overline{m}(x) u+h, \quad u \in H_0^1(\Omega)
$$
has a unique solution $u\gg0$ if $r<\mu_1(\gamma)$ and no solution $u\geq 0$ if $r\geq \mu_1(\gamma)$.
\medbreak
Now, arguing as in \cite{L-G_1996}, we prove that the function $\mu_1(\gamma)$ is concave and, following  \cite{G_G_P_2007}, that $\mu_1(0)>0$  and $\lim_{\gamma\to +\infty} \mu_1(\gamma)=-\infty$. This proves the existence of a unique $\gamma_1 > 0$ such that $\mu_1(\gamma_1)=0$, i.e. a unique principal eigenvalue $\gamma_1 > 0$  to problem \eqref{eq-maxim-anti-maxim-1}.

\medbreak
Finally, having at hand the existence and uniqueness of the principal eigenvalue $\gamma_1>0$, the rest of the proof follows exactly as in \cite{G_G_P_2007}.
\end{proof}


\section{Solving the limit coercive case} \label{coercive-ch3}

This section is devoted to the proof of Theorem \ref{necessary and sufficient-ch3}. Let us first recall some of the notation introduced in Section \ref{I-ch3}. For a function $d \in L^q(\Omega)$ for some $q > N/2$ we recall that 
\[
\begin{aligned}
 W_d := &\{ w \in H_0^1(\Omega): d(x)w(x) = 0 \textup{ a.e. in } \Omega,\ \|w\|  = 1\} \end{aligned}
\]
and
\begin{equation}\label{md-ch3}
m_d:= \left\{
\begin{aligned}
& \inf_{u \in W_{d}} \int_{\Omega} \big( |\nabla u|^2 - \mu h(x) u^{2}\big)\, dx\,, \quad & \textup{if} \quad W_{d} \neq \emptyset,\\
& + \infty\,, & \quad \textup{if} \quad W_{d} = \emptyset.
\end{aligned}
\right.
\end{equation}
Let us emphasize that 
\[
 W_0 = \{ w \in H_0^1(\Omega):\, \|w\|  = 1\}
\]
and immediately observe that $W_d \subseteq W_0$.

\begin{remark}  \label{remark-normalization-ch3}
Note that we could have chosen a different normalization in the definition of $W_d$. In fact, if we define
\[
\begin{aligned}
\widetilde W_d := &\{ w \in H_0^1(\Omega): d(x)w(x) = 0 \textup{ a.e. in } \Omega,\ \|w\|_2  = 1\} \end{aligned}
\]
and
\begin{equation}  \label{md-ch3}
\widetilde m_d:= \left\{
\begin{aligned}
& \inf_{u \in \widetilde W_{d}} \int_{\Omega} \big( |\nabla u|^2 - \mu h(x) u^{2}\big)\, dx\,, \quad & \textup{if} \quad \widetilde W_{d} \neq \emptyset,\\
& + \infty\,, & \quad \textup{if} \quad \widetilde W_{d} = \emptyset,
\end{aligned}
\right.
\end{equation}
we can prove that 
\[
m_d>0 \quad  \Leftrightarrow \quad \widetilde m_d>0. 
\]
\end{remark}

\medbreak
\begin{proof}[\textbf{Proof of Theorem \ref{necessary and sufficient-ch3}}]
By \cite[Theorem 1.1]{DC_F_2018} we know that $m_d > 0$ is a sufficient condition to ensure that \eqref{Pcoercive-ch3} has a solution. Hence, we just have to prove that the existence of a solution to \eqref{Pcoercive-ch3} implies that $m_d > 0$. If $W_d = \emptyset$, the result is obviously true. Hence, we just consider the case where $W_d \neq \emptyset$. In the case where $d \equiv 0$, the result follows from \cite[Proposition 7.1]{DC_F_2018}. Thus, we may assume that $d \gneqq 0$. 
\medbreak
Assume that \eqref{Pcoercive-ch3} has a solution $u \in \HLinfty$. Then, it follows that 
\begin{equation} \label{phi2-test-ch3}
\int_{\Omega} \left( \gradu \nabla (\phi^2) + d(x) u \phi^2 - \mu |\gradu|^2 \phi^2 - h(x)\phi^2 \right) dx = 0, \quad \forall\ \phi \in \mathcal{C}_0^{\infty}(\Omega).
\end{equation}
Now, by Young's inequality, observe that
\begin{equation} \label{young-ch3}
\int_{\Omega} \gradu \nabla (\phi^2) dx \leq \int_{\Omega} \big( \mu |\gradu|^2 \phi^2 + \frac{1}{\mu} |\nabla \phi|^2 \big)\, dx, \quad \forall\ \phi \in \mathcal{C}_0^{\infty}(\Omega).
\end{equation}
Hence, gathering  \eqref{phi2-test-ch3}-\eqref{young-ch3} and using the density of $\mathcal{C}_0^{\infty}(\Omega)$ in $H_0^1(\Omega)$, we have that
\begin{equation}
\label{eq 3.5}
\int_{\Omega} \big(\frac{1}{\mu} |\nabla \phi|^2 + d(x)u \phi^2  - h(x)\phi^2 \big)\, dx \geq 0, \quad \forall\ \phi \in H_0^1(\Omega).
\end{equation}
Next, since for any $\phi \in W_d$, 
\begin{equation} \label{remark1-NS-ch3}
\int_{\Omega} d(x)u \phi^2 dx = 0,
\end{equation}
and $W_d \subseteq W_0$, we obtain by \eqref{eq 3.5} that
\begin{equation} \label{ineq-NS-ch3}
\begin{aligned}
\inf_{\phi \in W_d} \int_{\Omega} \big(\frac{1}{\mu} |\nabla \phi|^2 - h(x)\phi^2 \big)\, dx 
& 
= \inf_{\phi \in W_d} \int_{\Omega} 
\big( \frac{1}{\mu} |\nabla \phi|^2 + d(x)u\phi^2  - h(x)\phi^2 \big)\, dx \\
& 
\geq \inf_{\phi \in W_0} \int_{\Omega}
 \big( \frac{1}{\mu} |\nabla \phi|^2 + d(x)u\phi^2  - h(x)\phi^2 \big)\, dx \geq 0.
\end{aligned}
\end{equation}
Assume by contradiction that
\begin{equation*}
m_d = \mu \inf_{\phi \in W_d} \int_{\Omega} \big(\frac{1}{\mu} |\nabla \phi|^2 - h(x)\phi^2 \big)\, dx = 0.
\end{equation*}
Then, by standard arguments there exists $\phi_0 \in W_d \subseteq W_0$ non-negative such that 
\[ \int_{\Omega} \big(\frac{1}{\mu} |\nabla \phi_0|^2 - h(x)\phi_0^2 \big) \, dx = 0.\]
Thus, by Remark \ref{remark-normalization-ch3}, \eqref{remark1-NS-ch3} and \eqref{ineq-NS-ch3}, we have that
\[  
\begin{aligned}
\inf_{\phi \in \widetilde W_0}
 \int_{\Omega}  \big( \frac{1}{\mu} |\nabla \phi|^2 + d(x)u\phi^2  - h(x)\phi^2 \big) \,dx 
 & 
 = \inf_{\phi \in W_0} \int_{\Omega} \big( \frac{1}{\mu} |\nabla \phi|^2 + d(x)u\phi^2  - h(x)\phi^2 \big) \,dx 
 \\
& = \int_{\Omega} \big(\frac{1}{\mu} |\nabla \phi_0|^2 +  d(x)u\phi_0^2 - h(x)\phi_0^2 \big)\, dx = 0,
\end{aligned} \]
and so, that $\phi_0$ is an eigenfunction associated to the first eigenvalue (which is then equal to zero) of the eigenvalue problem
\[ -\div\left( \frac{\nabla \phi}{\mu} \right) + (d(x)u - h(x)) \phi = \lambda \phi, \quad \phi \in H_0^1(\Omega).\]
Applying then \cite[Theorem 8.20]{G_T_2001_S_Ed} 
we deduce that $\phi_0 > 0$ in $\Omega$. Since $d \gneqq 0$, this contradicts that $\phi_0 \in W_d$ and the result follows.
\end{proof}
%

\section{An auxiliary variational problem and its Palais-Smale condition} \label{Section-Auxiliary-funcional-ch3}

As explained in the introduction, to
control from below the solutions to \eqref{p3-ch3}, we modify the problem using a lower solution to problem \eqref{Plambda-ch3}.
The aim of this section is to introduce and study this auxiliary problem.
First of all, inspired by \cite[Section 5]{DC_F_2018}, let us define
\begin{equation}  \label{g-ch3}
g(s) = \left\{ 
\begin{aligned}
& \frac{1}{\mu}(1+\mu s) \ln(1+\mu s)\,, \quad & s > -1/\mu\,,\\
& 0\,, & s \leq -1/\mu\,,
\end{aligned}
\right.
\qquad \textup{ and }  \qquad G(s) = \int_0^s g(t)dt.
\end{equation}
In the following lemma we gather some already known properties of these functions.

\begin{lemma} \label{gProperties-ch3} $ $ 
\begin{itemize}
\item[i)] The function $g$ is continuous on $\R$, satisfies  $g > 0$ on $\R^{+}$ and there exists $D > 0$ 
with $-D \leq g \leq 0$ on $\R^{-}$. Moreover, $G \geq 0$ on $\R$.
\item[ii)] For any $\delta > 0$, there exists $\overline{c} = \overline{c}(\delta,\mu) > 0$ such that, 
 for any $s > \frac{1}{\mu}$, 
$g(s) \leq \overline{c}\, s^{1+\delta}$.
\item[iii)] $\lim_{s\to +\infty}g(s)/s =+ \infty$ and $\lim_{s\to+\infty}G(s)/s^2 = + \infty$.
\item[iv)] For any $s \in \R$, it follows that $g(s)-s \geq 0$.
\end{itemize}
\end{lemma}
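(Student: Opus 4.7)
The plan is to verify each of the four properties by direct computation, reducing everything through the substitution $t = 1+\mu s$ to well-known properties of the single-variable function $\varphi(t) = t\ln t$ on $[0,+\infty)$ (with $\varphi(0) := 0$).

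For part (i), continuity of $g$ on $(-1/\mu, +\infty)$ and on $(-\infty, -1/\mu)$ is clear; continuity at $s = -1/\mu$ follows from $\lim_{t\to 0^+} t\ln t = 0$. The sign of $g$ on $(-1/\mu,+\infty)$ equals the sign of $\ln(1+\mu s)$, i.e.\ the sign of $s$, giving $g>0$ on $\R^+$ and $g<0$ on $(-1/\mu,0)$; together with $g \equiv 0$ on $(-\infty,-1/\mu]$ this gives the sign claims. For the lower bound on $\R^-$, I would use that $\varphi(t) = t\ln t$ attains its global minimum on $[0,+\infty)$ at $t = 1/e$ with value $-1/e$, so one may take $D = 1/(\mu e)$. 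Finally, $G \geq 0$ follows from $G(0)=0$ together with $G' = g \geq 0$ on $\R^+$ (so $G\geq 0$ on $\R^+$), and, for $s\in(-1/\mu,0)$, $G(s) = -\int_s^0 g\,dt > 0$ since $g<0$ on that interval; for $s\leq -1/\mu$, $G(s) = G(-1/\mu) \geq 0$ since $g\equiv 0$ there.

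For part (ii), observe that
\[
\frac{g(s)}{s^{1+\delta}} \;=\; \frac{(1+\mu s)\ln(1+\mu s)}{\mu\, s^{1+\delta}}
\]
is continuous on $[1/\mu, +\infty)$ and tends to $0$ as $s\to +\infty$ since $\ln(1+\mu s) = o(s^\delta)$; hence it is bounded, which yields the required constant $\overline{c}$. For part (iii), the limit $g(s)/s \to +\infty$ is immediate from $(1+\mu s)/s \to \mu$ and $\ln(1+\mu s)\to +\infty$. The limit $G(s)/s^2 \to +\infty$ then follows from L'H\^opital's rule, since $G'(s)/(2s) = g(s)/(2s)$.

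For part (iv), I would split into cases. For $s \leq -1/\mu$, $g(s)-s = -s \geq 1/\mu > 0$. For $s > -1/\mu$, set $t = 1+\mu s > 0$ so that $s = (t-1)/\mu$ and
\[
\mu\bigl(g(s) - s\bigr) \;=\; t\ln t - (t-1) \;=:\; h(t).
\]
Since $h(1)=0$ and $h'(t) = \ln t$, the function $h$ is decreasing on $(0,1)$ and increasing on $(1,+\infty)$, so $h \geq 0$ on $(0,+\infty)$, which gives the claim. None of the steps presents a genuine obstacle; the only minor point requiring care is the case analysis at the junction $s = -1/\mu$, where one must check that the continuous extension of $g$ by $0$ is consistent with all four claims.
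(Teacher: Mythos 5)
Your proof is correct in all four parts, and each reduction — the substitution $t = 1+\mu s$, the minimum of $t\ln t$ at $t=1/e$, the bounded continuous quotient on $[1/\mu,+\infty)$, L'H\^opital for $G(s)/s^2$, and the function $h(t) = t\ln t - (t-1)$ with $h'(t) = \ln t$ — checks out. The paper itself gives no argument for this lemma: it simply cites \cite[Lemma 5.1]{DC_F_2018} for (i)--(iii) and \cite[Lemma 7]{J_S_2013} for (iv). Your computations are precisely the elementary verifications one would expect those references to contain, so this is a correct, self-contained rendering of what the paper delegates to its bibliography; there is no divergence in method worth flagging.
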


\begin{proof}
See \cite[Lemma 5.1]{DC_F_2018} for i), ii) and iii). See \cite[Lemma 7]{J_S_2013} for iv).
\end{proof}

Now, let $\underline{u}_{\lambda} \in H^1(\Omega) \cap \Linfty$ be a lower solution to \eqref{Plambda-ch3}. We define the function 
\begin{equation} \label{truncFunction-ch3}
\alpha_{\lambda} = \frac{1}{\mu} \left( e^{\mu \underline{u}_{\lambda}} -1 \right) \in H^1(\Omega) \cap \Linfty\,,
\end{equation}
and, before going further, observe that $ \alpha_{\lambda} \geq -1/\mu + \epsilon$ for some $\epsilon > 0$. We consider then the auxiliary truncated problem
\[ \label{Qlambda-ch3} \tag{$Q_{\lambda,\, \alpha_{\lambda}}$} -\Delta v = f_{\lambda, \, \alpha_{\lambda}}(x,v)\,, \quad v \in H_0^1(\Omega)\,,\]
where
\begin{equation} \label{flambda-ch3}
f_{\lambda, \, \alpha_{\lambda}}(x,s) 
= 
\left\{ \begin{aligned}
& c_{\lambda}(x)g(s) + (1+\mu s) \, h(x)\,, \quad & \textup{ if } s \geq \alpha_{\lambda}(x)\,,\\
& c_{\lambda}(x)g(\alpha_{\lambda}(x)) + (1+\mu \alpha_{\lambda}(x)) \, h(x)\,, \quad &\textup{ if } s \leq \alpha_{\lambda}(x)\,,
\end{aligned}
\right.
\end{equation}
and $g$ is defined in \eqref{g-ch3}. Following \cite[Lemma 5.2]{DC_F_2018} one can obtain the next lemma.
\begin{lemma} \label{equivProblems-ch3}
Assume \eqref{A1-ch3}, let $\underline{u}_{\lambda} \in H^1(\Omega) \cap \Linfty$ be a lower solution to \eqref{Plambda-ch3} and define $\alpha_{\lambda}$ by \eqref{truncFunction-ch3}.
Then, it follows that:
\begin{itemize}
\item[i)] Every $v$ solution to \eqref{Qlambda-ch3} belongs to $\Linfty$ and satisfies $v \geq \alpha_{\lambda}$.
\item[ii)] If $u$ is a solution to \eqref{Plambda-ch3} such that $u \geq \underline{u}_{\lambda}$ then $v = \frac{1}{\mu} \big( e^{\mu u} - 1 \big)$ is a solution to \eqref{Qlambda-ch3}.
\item[iii)] If $v$ is a solution to \eqref{Qlambda-ch3} then $u = \frac{1}{\mu} \ln (1+\mu v)$ is a solution to \eqref{Plambda-ch3}.
\end{itemize}
\end{lemma}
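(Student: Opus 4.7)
The lemma is the translation of the Cole-Hopf change of variables $v = \frac{1}{\mu}(e^{\mu u}-1)$ into the truncated setting, and it inherits its structure from \cite[Lemma 5.2]{DC_F_2018}. The opening step is the pointwise computation: for $u \in H_0^1(\Omega)\cap L^\infty(\Omega)$ the image $v$ lies in $H_0^1(\Omega)\cap L^\infty(\Omega)$, with $\nabla v=(1+\mu v)\nabla u$ and
\[
-\Delta v \;=\; (1+\mu v)\bigl(-\Delta u - \mu|\nabla u|^2\bigr),\qquad (1+\mu v)\,u \;=\; g(v).
\]
Substituting \eqref{Plambda-ch3} gives $-\Delta v = c_\lambda(x)g(v) + (1+\mu v)h(x)$. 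The same computation performed weakly (testing the equation for $u$ against $\varphi=(1+\mu v)\psi$ for $\psi\in H_0^1(\Omega)\cap L^\infty(\Omega)$, $\psi\geq 0$), applied to the lower solution $\underline{u}_\lambda$, shows that $\alpha_\lambda$ is a weak lower solution to the same transformed equation.

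Granted this, parts (ii) and (iii) follow almost immediately. In (ii), the hypothesis $u\geq \underline{u}_\lambda$ transfers to $v\geq \alpha_\lambda$ by monotonicity of $t\mapsto (e^{\mu t}-1)/\mu$, so $v$ sits on the upper branch of \eqref{flambda-ch3} and satisfies $-\Delta v = f_{\lambda,\alpha_\lambda}(x,v)$. In (iii), once (i) is known, the inequality $v\geq \alpha_\lambda\geq -1/\mu+\varepsilon$ keeps $1+\mu v$ uniformly positive, so $u=\frac{1}{\mu}\ln(1+\mu v)$ is well defined in $H_0^1(\Omega)\cap L^\infty(\Omega)$ and the reverse Cole-Hopf identity recovers \eqref{Plambda-ch3}.

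The core of the lemma is therefore (i), which I would split into a comparison step and an a priori $L^\infty$ bound. For $v\geq \alpha_\lambda$, I would subtract the lower-solution inequality for $\alpha_\lambda$ from the weak equation for $v$ and test against $\varphi=(\alpha_\lambda-v)^+$. This is an admissible element of $H_0^1(\Omega)\cap L^\infty(\Omega)$ since $\alpha_\lambda^+\in H_0^1(\Omega)$ (inherited from the very definition of lower solution) and $v\in H_0^1(\Omega)$. The decisive cancellation is that on $\{\alpha_\lambda>v\}$ the definition \eqref{flambda-ch3} forces $f_{\lambda,\alpha_\lambda}(x,v)=c_\lambda(x)g(\alpha_\lambda)+(1+\mu\alpha_\lambda)h(x)$, killing the right-hand side and leaving
\[
\int_\Omega|\nabla(\alpha_\lambda-v)^+|^2\,dx\;\leq\; 0,
\]
whence $v\geq \alpha_\lambda$ a.e.

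The main obstacle is the $L^\infty$ estimate, since $g$ is genuinely superlinear and $c_\lambda$ lies only in $L^q(\Omega)$ with $q>N/2$. The tool is Lemma \ref{gProperties-ch3}\,ii), which controls $g(s)$ by $C_\delta\,s^{1+\delta}$ for arbitrarily small $\delta>0$. Starting from $v\in L^{2^*}(\Omega)$, I would run a Moser/Stampacchia iteration on truncations $(v-k)^+$, choosing $\delta$ small enough at each stage that $|c_\lambda|\,g(v)\in L^{r_k}$ with $r_k$ strictly increasing; after finitely many steps the right-hand side belongs to $L^r(\Omega)$ with $r>N/2$ and standard elliptic regularity yields $v\in L^\infty(\Omega)$. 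The delicate point is the quantitative control of the gain in integrability at each step, and the flexibility of $\delta$ in Lemma \ref{gProperties-ch3}\,ii) is exactly what makes the scheme converge, following the argument already set down in \cite[Lemma 5.2]{DC_F_2018}.
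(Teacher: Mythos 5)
Your proposal is correct and follows essentially the same route as the paper, which simply refers the reader to \cite[Lemma 5.2]{DC_F_2018}: the Cole--Hopf identities $(1+\mu v)u=g(v)$ and $-\Delta v=(1+\mu v)(-\Delta u-\mu|\nabla u|^2)$, the observation that $f_{\lambda,\alpha_\lambda}(x,v)$ and $f_{\lambda,\alpha_\lambda}(x,\alpha_\lambda)$ coincide on $\{\alpha_\lambda>v\}$ so that testing with $(\alpha_\lambda-v)^+\in H_0^1(\Omega)$ forces $v\geq\alpha_\lambda$, and a Moser/Stampacchia bootstrap using Lemma \ref{gProperties-ch3}\,ii) for the $L^\infty$ bound. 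This is exactly the structure of the cited proof.
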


\begin{proof}
See \cite[Lemma 5.2]{DC_F_2018}.
\end{proof}

\begin{remark} \label{equivProblemsLowerUpper-ch3}
ii) and iii) also hold if the solutions are replaced by lower or upper solutions.
\end{remark}

One of the main advantages of problem \eqref{Qlambda-ch3} is that it admits a variational formulation. Its solutions in $\H$ can be obtained as critical points of the functional $I_{\lambda,\, \alpha_{\lambda}}: \H \to \R$ defined as
\begin{equation} \label{Ilambda-ch3}
I_{\lambda,\, \alpha_{\lambda}}(v) = \frac{1}{2}\int_{\Omega} |\gradv|^2 dx - \int_{\Omega} F_{\lambda,\, \alpha_{\lambda}}(x,v) dx\,,
\end{equation}
where 
\begin{equation} \label{FLambdaPos-ch3}
F_{\lambda,\, \alpha_{\lambda}}(x,s) = c_{\lambda}(x)\,G(s) + \frac{1}{2\mu} (1+\mu s)^2 h(x)\,, \quad \textup{ if } s \geq \alpha_{\lambda}(x)\,,
\end{equation}
and
\begin{equation} \label{FLambdaNeg-ch3}
\begin{aligned}
F_{\lambda,\, \alpha_{\lambda}}(x,s) = \big[ c_{\lambda}(x) g(\alpha_{\lambda}(x)) & + (1+\mu\alpha_{\lambda}(x))h(x) \big](s-\alpha_{\lambda}(x)) \\ & + c_{\lambda}(x)\, G(\alpha_{\lambda}(x)) + \frac{1}{2\mu}(1+\mu \alpha_{\lambda}(x))^2 h(x)\,, \quad \textup{ if } s \leq \alpha_{\lambda}(x)\,.
\end{aligned}
\end{equation}
Note that, under the assumption \eqref{A1-ch3}, since $g$ has subcritical growth (see Lemma \ref{gProperties-ch3}), it is standard to show that $I_{\lambda, \, \alpha_{\lambda}} \in \mathcal{C}^{1}(\H,\R)$.

\medbreak
Now, we are going to show that, for any $\lambda > 0$, the functional $I_{\lambda, \, \alpha_{\lambda}}$ satisfies the Palais-Smale condition (see Definition \ref{PSc-ch3}).
First we show that the Palais-Smale sequences are bounded. Actually, we prove a slightly more general result.  Our proof is inspired by \cite{J_RQ_2016}. However, since we do not assume  $h \gneqq 0$ and truncate the nonlinearity, it is significantly more involved.
Let us define
\begin{equation} \label{mclambda-ch3}
m_{c_{\lambda}}:= \left\{
\begin{aligned}
& \inf_{u \in W_{c_{\lambda}}} \int_{\Omega} \big( |\gradu|^2 - \mu h(x) u^{2}\big)\, dx\,, \quad & \textup{if} \quad  W_{c_{\lambda}} \neq \emptyset\,,\\
& + \infty\,, & \quad \textup{if} \quad W_{c_{\lambda}} = \emptyset\,,
\end{aligned}
\right.
\end{equation}
where
\[
W_{c_{\lambda}} = \left\{ w \in \H\,,\ c_{\lambda}(x)w(x) = 0 \textup{ a.e. } x \in \Omega\,,\ w \geq 0\,,\ \|w\| = 1  \right\}.
\]
\medbreak
\begin{remark}
Observe that $W_{c_{\lambda}} \subseteq W_{c_{-}}$ and so that $m_{c_{\lambda}} \geq m_{c_{-}}$.
\end{remark}
\medbreak

\begin{lemma} \label{boundPS-ch3}
Fixed $\lambda > 0$ arbitrary, assume \eqref{A1-ch3} and suppose that $m_{c_{\lambda}} > 0$. Then, 
for all $A\in \R$, 
every sequence $\{v_n\} \subset H_0^1(\Omega)$ with $I_{\lambda,\, \alpha_{\lambda}}(v_n)\leq A$ and 
$\|I_{\lambda,\, \alpha_{\lambda}}'(v_n)\|\to 0$ is bounded. In particular, for all $d \in \R$, the Palais-Smale sequences for $I_{\lambda,\alpha_{\lambda}}$ at level $d$ are bounded. 
\end{lemma}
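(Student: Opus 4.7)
I would argue by contradiction. Suppose there exists $(v_n) \subset \H$ with $I_{\lambda,\alpha_\lambda}(v_n) \leq A$, $\|I'_{\lambda,\alpha_\lambda}(v_n)\|_* \to 0$, and $\|v_n\| \to +\infty$. Normalize by setting $w_n := v_n/\|v_n\|$, so that $\|w_n\|=1$; along a subsequence, $w_n \rightharpoonup w$ in $\H$, $w_n \to w$ in $L^r(\Omega)$ for every $r \in [1,2^*[$ and pointwise a.e. I would then aim to show: (i) $w \geq 0$ a.e., (ii) $c_\lambda w \equiv 0$ a.e., and (iii) use $m_{c_\lambda}>0$ to reach a contradiction.

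Step (i) is immediate because $v_n \geq \alpha_\lambda \geq -\|\alpha_\lambda\|_\infty$ by Lemma \ref{equivProblems-ch3}, so dividing by $\|v_n\| \to \infty$ yields $w \geq 0$. Step (ii) is the heart of the argument. Fix $\psi \in \mathcal{C}_c^\infty(\Omega)$ with $\psi \geq 0$ and $\supp \psi \subset \{c_+>0\}$; on $\supp \psi$ one has $c_\lambda \equiv \lambda c_+$ since $c_+ c_- \equiv 0$. Testing $I'_{\lambda,\alpha_\lambda}(v_n) = o_{H^{-1}}(1)$ against $\psi$ and dividing by $\|v_n\|$, I would obtain
\[
\frac{\lambda}{\|v_n\|}\int c_+ \, g(v_n)\, \psi\,dx = \int \nabla w_n \cdot \nabla \psi\, dx - \int \frac{1 + \mu v_n}{\|v_n\|} h\psi\, dx + o(1),
\]
whose right-hand side converges to the finite quantity $\int \nabla w \cdot \nabla \psi - \mu \int h w \psi\,dx$. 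On $\{c_+>0,w>0\}$, however, $v_n \to +\infty$ pointwise, so the integrand $c_+ \psi \, g(v_n)/\|v_n\| = c_+ \psi\, (g(v_n)/v_n)\, w_n$ tends to $+\infty$ by Lemma \ref{gProperties-ch3} iii). Fatou's lemma then forces $\meas\{c_+>0, w>0, \psi>0\}=0$; varying $\psi$, one concludes $c_+ w \equiv 0$ a.e. The analogous choice of $\psi$ supported in $\{c_- > 0\}$ gives $c_- w \equiv 0$; combining, $c_\lambda w \equiv 0$ a.e.

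For step (iii) when $w \not\equiv 0$: since $\|w\| \leq 1$, we have $w/\|w\| \in W_{c_\lambda}$, so $\|w\|^2 - \mu \int h w^2\,dx \geq m_{c_\lambda}\|w\|^2 > 0$. Testing $I'_{\lambda,\alpha_\lambda}(v_n) = o_{H^{-1}}(1)$ against $w$ and using $c_\lambda w \equiv 0$ to annihilate the superlinear term would yield, in the limit after dividing by $\|v_n\|$, the identity $\|w\|^2 = \mu \int h w^2\,dx$, contradicting the previous inequality. For step (iii) when $w \equiv 0$, one has $\int h w_n^2\,dx \to 0$, and I would combine the energy bound with the explicit identity $s g(s) - 2 G(s) = s^2/2 + (s-g(s))/\mu$ together with $G(s) \geq s^2/2$ for $s \geq 0$ (from Lemma \ref{gProperties-ch3} iv)) to extract a contradiction by splitting $\Omega$ according to the magnitude of $v_n$ and using Sobolev--H\"older estimates based on the slight growth bound of Lemma \ref{gProperties-ch3} ii).

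The main obstacle will be step (iii) in the case $w \equiv 0$, where the absence of a nontrivial weak limit prevents the direct use of $m_{c_\lambda}$ and instead forces a careful analysis exploiting only the logarithmic superlinearity of $g$. Moreover, throughout the proof, the sign changes of $c_\lambda$ (and of $h$) require the positive and negative contributions of the $c_\pm g(v_n)$ terms to be kept strictly separated via $c_+ c_- \equiv 0$, which precludes the cleaner Ambrosetti--Rabinowitz-type estimates available in the framework $h \gneqq 0$ of \cite{J_RQ_2016}; the truncation at $\alpha_\lambda$ also adds bookkeeping, which is handled via the uniform bound on $\|\alpha_\lambda\|_\infty$.
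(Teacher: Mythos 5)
Your overall strategy mirrors the paper's: normalize by $\|v_n\|$, show the weak limit $w$ of $w_n = v_n/\|v_n\|$ vanishes, then extract a contradiction from the superlinearity of $g$. Your steps (ii) (showing $c_\lambda w \equiv 0$ via Fatou) and the case $w\not\equiv 0$ of step (iii) match the paper's arguments in substance. However, there are two genuine problems.

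First, your step (i) misapplies Lemma \ref{equivProblems-ch3}. That lemma gives $v \geq \alpha_\lambda$ for \emph{solutions} of \eqref{Qlambda-ch3}, not for arbitrary Palais--Smale-type sequences; an element $v_n$ with $\|I'_{\lambda,\alpha_\lambda}(v_n)\|$ merely small can be very negative somewhere. The correct way to get $w \geq 0$ (as the paper does) is to first show $\{v_n^-\}$ is bounded: since the truncation makes $f_{\lambda,\alpha_\lambda}(x,\cdot)$ bounded on $\R^-$, one has $\langle I'_{\lambda,\alpha_\lambda}(v_n), v_n^-\rangle \leq -\|v_n^-\|^2 + D_1\|v_n^-\| + D_2$, which combined with $\|I'_{\lambda,\alpha_\lambda}(v_n)\| \to 0$ gives the bound, and then $w^- \equiv 0$ follows because $\|v_n\| \to \infty$.

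Second, and more seriously, your treatment of the hard case $w \equiv 0$ is not a proof. You correctly compute the identity $H(s) = \frac12 sg(s) - G(s) = \frac{s^2}{4} - \frac{g(s)-s}{2\mu}$ and invoke the energy bound $I_{\lambda,\alpha_\lambda}(v_n) - \frac12\langle I'_{\lambda,\alpha_\lambda}(v_n),v_n\rangle \leq A + \epsilon_n\|v_n\|$, but ``splitting $\Omega$ according to the magnitude of $v_n$ and Sobolev--H\"older estimates'' does not resolve the issue. Because $c_\lambda$ changes sign, the term $\int c_\lambda H(v_n)$ is not sign-controlled, and after dividing by $\|v_n\|^2$ the energy identity only gives $\int c_\lambda w_n^2 \to 0$, which is already known since $w_n\to 0$ in $L^2$ — no contradiction. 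The paper's argument is sharper: from $\langle I'(v_n),v_n\rangle/\|v_n\|^2\to 0$ and $w\equiv 0$ one first derives (their Step 2) $\frac{1}{\|v_n\|^2}\int_{\{v_n>\alpha_\lambda\}}c_\lambda g(v_n)v_n\to 1$. Writing $\ln(1+\mu v_n)=\ln(\|v_n\|)+\ln\bigl(\mu w_n + \tfrac{1}{\|v_n\|}\bigr)$, this becomes
\[
\ln(\|v_n\|)\int_{\{v_n>\alpha_\lambda\}}c_\lambda w_n^2\,dx + \int_{\{v_n>\alpha_\lambda\}}c_\lambda w_n^2\,\ln\bigl(\mu w_n + \tfrac{1}{\|v_n\|}\bigr)\,dx \to 1.
\]
Separately (their Step 4), multiplying the energy inequality by $4\ln(\|v_n\|)/\|v_n\|^2$ and checking that every remainder term vanishes yields $\limsup_n\ln(\|v_n\|)\int c_\lambda w_n^2 \leq 0$. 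Combining the two forces $\liminf_n\int c_\lambda w_n^2\ln\bigl(\mu w_n + \tfrac{1}{\|v_n\|}\bigr)\geq 1$, contradicting $w_n\to 0$ in $L^r$. This multiplication by $\ln(\|v_n\|)$ and the decomposition of the logarithm is precisely the extra idea your sketch is missing; without it the case $w\equiv 0$ remains open.
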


\begin{proof}
Let $\{v_n\} \subset H_0^1(\Omega)$ be such a sequence.
First we claim that the sequence $\{v_n^{-}\}$ is bounded.
Indeed, as $\|I_{\lambda,\, \alpha_{\lambda}}'(v_n)\|\to 0$
there exists a sequence $\epsilon_n \to 0$ such that
\begin{equation} \label{ineq1-vnminus-ch3}
 -\epsilon_n \|v_n^{-}\| \leq \langle I_{\lambda,\, \alpha_{\lambda}}'(v_n),v_n^{-}\rangle \leq \epsilon_n \|v_n^{-}\|.
\end{equation}
Also, since $f_{\lambda,\, \alpha_{\lambda}}(x,s)$ is bounded in $\Omega \times \R^{-}$, there exist $D_1$, $D_2 > 0$ such that
\begin{equation} \label{ineq2-vnminus-ch3}
\langle I_{\lambda, \alpha_{\lambda}}'(v_n),v_n^{-}\rangle \leq -\|v_n^{-}\|^2 + D_1 \|v_n^{-}\| + D_2.
\end{equation}
Gathering \eqref{ineq1-vnminus-ch3} and \eqref{ineq2-vnminus-ch3} we deduce that 
\[  0 \leq -\|v_n^{-}\|^2 + (D_1 + \epsilon_n) \|v_n^{-}\| + D_2,\]
and the claim follows. To prove that $\{v_n^{+}\}$ is also bounded we assume by contradiction that $\|v_n\| \to \infty$ and introduce the sequence $\{w_n\} \subset \H$ given by $w_n = \frac{v_n}{\|v_n\|}$. Observe that $\{w_n\}$ is bounded in $\H$. Hence, up to a subsequence, it follows that $w_n \rightharpoonup w$ weakly  in $\H$, $w_n \to w$ strongly in $L^r(\Omega)$ for $1 \leq r < 2^{\ast}$ and $w_n \to w$ a.e. in $\Omega$. We split the rest of the proof into several steps: 
\bigbreak


\noindent \textbf{Step 1:} $w \equiv 0$.
\smallbreak
As $\|v_n^{-}\|$ is bounded and by assumption $\|v_n\| \to \infty$, clearly $w^{-} \equiv 0$. It then remains to prove that $w^{+} \equiv 0$. We first prove that $c_{\lambda}w \equiv 0$. Assume by contradiction that $c_{\lambda}w^{+} \not \equiv 0$ and observe that for every $\varphi \in \H$, we can write
\begin{equation} \label{w equiv 0 - eq1-ch3}
\begin{aligned}
\langle I_{\lambda, \, \alpha_{\lambda}}'(v_n),\varphi\rangle = \int_{\Omega} \nabla v_n \nabla \varphi dx & - \int_{\{v_n \geq \alpha_{\lambda}\}} \mu v_n h(x)\varphi dx  - \int_{\{v_n \geq \alpha_{\lambda}\}} c_{\lambda}(x) g(v_n) \varphi dx - \int_{\{v_n \geq \alpha_{\lambda}\}}  h(x) \varphi dx \\
& - \int_{\{v_n < \alpha_{\lambda}\}} f_{\lambda, \alpha_{\lambda}}(x,v_n) \varphi\, dx
\end{aligned}
\end{equation}
Hence, using that $f_{\lambda, \, \alpha_{\lambda}}(x,s)$ is bounded in 
$\Omega \times \,]-\infty, \|\alpha_{\lambda}^{+}\|_{\infty}]$ and the convergence of $w_n$, it follows that
\begin{equation*} 
\frac{\langle I_{\lambda, \, \alpha_{\lambda}}'(v_n),\varphi\rangle}{\|v_n\|} 
=
 \int_{\Omega} \nabla w \nabla \varphi\, dx 
 -
  \int_{\Omega} \mu w^+ h(x)\varphi\,dx 
  - 
  \int_{\{v_n \geq \alpha_{\lambda}\}} c_{\lambda}(x) g(v_n) \frac{\varphi}{\|v_n\|} dx + o(1), \ \ \forall\ \varphi \in \H\,.
\end{equation*}
Actually, using that $g$ is bounded on $]-\infty, \|\alpha_{\lambda}^{+}\|_{\infty}]$ and that $w^{-} \equiv 0$, we obtain that
\begin{equation*}
\int_{\Omega} c_{\lambda}(x) g(v_n) \frac{\varphi}{\|v_n\|} dx = \int_{\Omega} \big( \nabla w \nabla \varphi - \mu w h(x) \varphi \big) \, dx + o(1)\,, \quad \forall\ \varphi \in \H\,.
\end{equation*}
Equivalently, we deduce that, for every $\varphi \in \H$,
\begin{equation}  \label{PSContradiction1-ch3}
\int_{\Omega} c_{\lambda}(x) \frac{g(v_n)-v_n}{\|v_n\|} \varphi\, dx = \int_{\Omega} \Big( \nabla w \nabla \varphi - (c_{\lambda}(x) + \mu h(x)) w \varphi \Big) \, dx + o(1).
\end{equation}
Since  $c_{\lambda}w^{+} \not \equiv 0$, we may choose $\varphi \in \H$ and a measurable subset $\Omega_{\varphi} \subset \Omega$ such that
\[ |\Omega_{\varphi}| > 0\,, \quad c_{\lambda}w^{+}\varphi > 0 \textup{ in } \Omega_{\varphi} \subset \Omega \quad \textup{and} \quad c_{\lambda}w^{+}\varphi \equiv 0 \textup{ in } \Omega \setminus \Omega_{\varphi}\,.\]
As $g(s) - s \geq 0$ on $\R$ (see Lemma \ref{gProperties-ch3}), it follows that
\[ c_{\lambda}(x) \frac{g(v_n)-v_n}{\|v_n\|} \varphi \geq 0 \quad \textup{ a.e. in } \Omega_{\varphi}\,.\]
Moreover, observe that
\[ \liminf_{n \to \infty} c_{\lambda}(x) \frac{g(v_n)-v_n}{\|v_n\|} \varphi = 
\liminf_{n \to \infty} c_{\lambda}(x) w_n \frac{g(w_n \|v_n\|)-w_n\|v_n\|}{w_n\|v_n\|} \varphi = + \infty \quad \textup{a.e. in } \Omega_{\varphi}\,.\]
Hence, applying Fatou's Lemma we deduce that
\[ \liminf_{n \to \infty} \int_{\Omega_{\varphi}} c_{\lambda}(x) \frac{g(v_n)-v_n}{\|v_n\|} \varphi\, dx = + \infty\,,\]
which yields a contradiction with \eqref{PSContradiction1-ch3}. Thus, we conclude that $c_{\lambda} w \equiv 0$. Now, we take $\varphi = w$ in \eqref{w equiv 0 - eq1-ch3} and divide by $\|v_n\|$. Using that $c_{\lambda} w \equiv 0$ and that $\|I'_{\lambda,\alpha_{\lambda}}(v_n)\| \to 0$, we get that
\[ \int_{\Omega} \nabla w_n \nabla w dx - \int_{\Omega} \mu w_n w h(x) \,dx \to 0,\]
and so, since $w_n \rightharpoonup w$  weakly in $\H$ and $w_n \to w$  strongly in $L^r(\Omega)$, for $1 \leq r < 2^{\ast}$, that
\[ \int_{\Omega} \big( |\nabla w|^2 - \mu h(x) w^2 \big)\, dx = 0\,.\]
\medbreak
\noindent By this last identity and the facts that  $w \geq 0$ and $c_{\lambda} w \equiv 0$, the condition $m_{c_{\lambda}} > 0$ implies that $w \equiv 0$.
\bigbreak


\noindent \textbf{Step 2:} $ \displaystyle \frac{1}{\|v_n\|^2} \int_{\{v_n > \alpha_{\lambda}\}} c_{\lambda}(x) g(v_n)v_n dx  \to 1$.
\medbreak
First of all, observe that 
\[
\begin{aligned}
 \frac{\langle I_{\lambda, \, \alpha_{\lambda} }'(v_n),v_n \rangle}{\|v_n\|^2}
  = 
  1 - \frac{1}{\|v_n\|^2} \Big[ \mu \int_{\{v_n > \alpha_{\lambda}\}} v_n^2 h(x) \, dx & 
  + \int_{\{v_n > \alpha_{\lambda}\}} c_{\lambda}(x) g(v_n)v_n \, dx 
  \\
& + \int_{\{v_n > \alpha_{\lambda}\}} v_n h(x) \, dx 
+\int_{\{v_n \leq \alpha_{\lambda}\}} f_{\lambda, \alpha_{\lambda}}(x,v_n) \,dx \Big] 
\to 0\,.
\end{aligned}
\]
Hence, using that $w \equiv 0$ and that $f_{\lambda, \, \alpha_{\lambda}}(x,s)$ is bounded in 
$\Omega \times \,]-\infty, \|\alpha_{\lambda}^{+}\|_{\infty}]$, we deduce that
\[ 1 - \frac{1}{\|v_n\|^2} \int_{\{v_n > \alpha_{\lambda}\}} c_{\lambda}(x) g(v_n)v_n\, dx \to 0\,.\]

\medbreak


\noindent \textbf{Step 3:} $ \displaystyle \ln (\|v_n\|) \int_{\{v_n > \alpha_{\lambda}\}} c_{\lambda}(x) w_n^2 \, dx + \int_{\{v_n > \alpha_{\lambda}\}} c_{\lambda}(x) w_n^2 \ln\big(\mu w_n + \frac{1}{\|v_n\|} \big) \, dx \to 1$.
\medbreak
By the definition of $g$ (see \eqref{g-ch3}) and Step 2, we have
\begin{equation}
\label{eq Step 3 PS}
 \frac{1}{\|v_n\|^2} \int_{\{v_n > \alpha_{\lambda}\}} c_{\lambda}(x) \frac{\ln(1+\mu v_n)}{\mu}\, v_n\, dx +  \frac{1}{\|v_n\|^2} \int_{\{v_n > \alpha_{\lambda}\}} c_{\lambda}(x)  \ln(1+\mu v_n)\, v_n^2\, dx
   \to 1.
 \end{equation}
%
Now, as there exists $D>0$ such that 
\[
0\leq \frac{\ln(1+\mu s)}{\mu}\, s\leq {s}^2 +D, \quad \forall\ s > \alpha_{\lambda}(x), \quad \textup{ a.e. } x \in \Omega,  
\]
we deduce that 
\[ 
\frac{1}{\|v_n\|^2} \int_{\{v_n > \alpha_{\lambda}\}} \Big| c_{\lambda}(x) \frac{\ln(1+\mu v_n)}{\mu}\, v_n\Big|\, dx
\leq 
 \int_{\{v_n > \alpha_{\lambda}\}} |c_{\lambda}(x)|\Big( w_n^2 +\frac{D}{\|v_n\|^2}\Big)\, dx
\]
and, since $w_n\to w \equiv 0$ strongly in $L^r(\Omega)$ for all $1 \leq r < 2^{\ast}$, we obtain that
\[ 
\frac{1}{\|v_n\|^2} \int_{\{v_n > \alpha_{\lambda}\}}  c_{\lambda}(x) \frac{\ln(1+\mu v_n)}{\mu}\, v_n\, dx
 \to 0\,.\]
The claim then follows from \eqref{eq Step 3 PS} and the property of the logarithm that implies  
\[
\ln(1+\mu v_n)=\ln(\|v_n\|)+ \ln \big(\frac{1}{\|v_n\|}+\mu w_n\big).
\]
%

\medbreak


\noindent \textbf{Step 4:} $ \displaystyle \limsup_{n\to\infty} \Big(\ln (\|v_n\|) \int_{\{v_n > \alpha_{\lambda}\}} c_{\lambda}(x) w_n^2 \, dx\Big) \leq 0$.
\smallbreak

First of all, defining
$H(s) = \frac{1}{2}g(s)s - G(s)$, observe that
\begin{equation*}
\begin{aligned}
A + \epsilon_n \|v_n\| + o(1) & \geq I_{\lambda, \, \alpha_{\lambda}}(v_n) - \frac{1}{2}\langle I_{\lambda, \, \alpha_{\lambda}}'(v_n),v_n \rangle \\
& =  \int_{\{v_n >\alpha_{\lambda}\}} c_{\lambda}(x)H(v_n)\, dx - \frac{1}{2\mu} \int_{\{v_n > \alpha_{\lambda}\}} (1+\mu v_n) h(x)\, dx\\ 
& \hspace{40mm} - \int_{\{v_n \leq \alpha_{\lambda}\}} \big[ F_{\lambda, \, \alpha_{\lambda}}(x,v_n) 
   - \frac{1}{2}f_{\lambda, \, \alpha_{\lambda}}(x,v_n)v_n\big]\, dx 
\end{aligned}
\end{equation*}
or equivalently
\begin{equation} \label{PSStep7-ch3}
\begin{aligned}
\int_{\{v_n >\alpha_{\lambda}\}} c_{\lambda}(x)H(v_n) \, dx \leq A
&
+
 \frac{1}{2\mu} \int_{\{v_n > \alpha_{\lambda}\}} (1+\mu v_n) h(x) \,dx
\\
&+
\int_{\{v_n \leq \alpha_{\lambda}\}} \big[ F_{\lambda, \, \alpha_{\lambda}}(x,v_n) 
   - \frac{1}{2}f_{\lambda, \, \alpha_{\lambda}}(x,v_n)v_n\big]\, dx
%
%
 + \epsilon_n \|v_n\|  +  o(1)\,.
\end{aligned}
\end{equation}
Now, using that for every $s >-\frac{1}{\mu}$,
\[ H(s) = \frac{s^2}{4} + \frac{s}{2\mu}\big(1-\ln(1+\mu s)\big)-\frac{1}{2\mu^2}\ln(1+\mu s)\,,\]
substituting in \eqref{PSStep7-ch3} and multiplying by $\frac{4\ln(\|v_n\|)}{\|v_n\|^2}$, we deduce that
\begin{equation*}
\begin{aligned}
\ln(\|v_n\|) \int_{\{v_n > \alpha_{\lambda}\}} c_{\lambda}(x) w_n^2 \,dx \leq  \frac{4 \ln (\|v_n\|)}{\|v_n\|^2} \Bigg[ A & 
+
 \frac{1}{2\mu} \int_{\{v_n > \alpha_{\lambda}\}} (1+\mu v_n) h(x) \,dx
\\
&+
\int_{\{v_n \leq \alpha_{\lambda}\}} \big[ F_{\lambda, \, \alpha_{\lambda}}(x,v_n) 
   - \frac{1}{2}f_{\lambda, \, \alpha_{\lambda}}(x,v_n)v_n\big]\, dx
   \\
& - \frac{1}{2\mu} \int_{\{v_n > \alpha_{\lambda}\}} c_{\lambda}(x) v_n \big(1- \ln(1+\mu v_n)\big)\, dx \\
& + \frac{1}{2\mu^2} \int_{\{v_n > \alpha_{\lambda}\}}  c_{\lambda}(x) \ln(1+\mu v_n) \, dx 
+ \epsilon_n \|v_n\| \Bigg] +  o(1)\,. 
\end{aligned}
\end{equation*}
We easily deduce that each term of the right hand side goes to zero
 and the Claim follows.
\medbreak


\noindent \textbf{Step 5:} \textit{Conclusion.}
\smallbreak
Considering together Steps 3 and 4, we deduce that
\[
\liminf_{n\to\infty} \int_{\{v_n > \alpha_{\lambda}\}} c_{\lambda}(x) w_n^2 \ln\big(\mu w_n + \frac{1}{\|v_n\|} \big) \, dx 
\geq 1.
\]
which clearly contradicts the fact that $w \equiv 0$. Since we have a contradiction, we conclude that $\|v_n\|$ is bounded, as desired.
\end{proof}


\begin{prop} \label{strongPS-ch3}
Fixed $\lambda > 0$ arbitrary, assume \eqref{A1-ch3} and suppose that $m_{c_{\lambda}} > 0$. Then,
 for all $A\in \R$, 
every sequence $\{v_n\} \subset H_0^1(\Omega)$ with $I_{\lambda,\, \alpha_{\lambda}}(v_n)\leq A$ and 
$\|I_{\lambda,\, \alpha_{\lambda}}'(v_n)\|\to 0$ admits a strongly convergent subsequence. In particular,  
 $I_{\lambda, \alpha_{\lambda}}$ satisfies the Palais-Smale condition at any level $d \in \R$. 
\end{prop}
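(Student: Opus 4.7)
The plan is to extract a weakly convergent subsequence, show that the nonlinear term passes to the limit in $H^{-1}(\Omega)$, and then upgrade weak convergence to strong convergence using the Hilbert structure of $H_{0}^{1}(\Omega)$.

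First I would apply Lemma \ref{boundPS-ch3} to conclude that $\{v_n\}$ is bounded in $H_{0}^{1}(\Omega)$ (this covers simultaneously the general statement and the Palais--Smale case). After passing to a subsequence, one obtains $v_n \rightharpoonup v$ weakly in $H_{0}^{1}(\Omega)$, $v_n \to v$ strongly in $L^{r}(\Omega)$ for every $1\le r < 2^{\ast}$, and $v_n \to v$ a.e. on $\Omega$.

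The technical core is the claim $f_{\lambda,\,\alpha_{\lambda}}(\cdot,v_n) \to f_{\lambda,\,\alpha_{\lambda}}(\cdot,v)$ strongly in $H^{-1}(\Omega)$. Using the definition \eqref{flambda-ch3}, the boundedness of $\alpha_{\lambda}$, and the subcritical estimate of Lemma \ref{gProperties-ch3}(ii), for any $\delta > 0$ there is a constant $C = C(\delta,\mu,\|\alpha_{\lambda}\|_{\infty}) > 0$ with
\[
|f_{\lambda,\,\alpha_{\lambda}}(x,s)| \le C\bigl(|c_{\lambda}(x)| + |h(x)|\bigr)\bigl(1 + |s|^{1+\delta}\bigr), \quad \text{a.e. } x \in \Omega,\ \forall s \in \R.
\]
Since $c_{\lambda},h \in L^{q}(\Omega)$ with $q > N/2$, a straightforward computation shows that $\delta > 0$ can be chosen small enough so that Hölder's inequality with exponents $(q,a,2^{\ast})$ satisfying $a(1+\delta) < 2^{\ast}$, together with the Sobolev embedding $H_{0}^{1}(\Omega) \hookrightarrow L^{2^{\ast}}(\Omega)$, yields
\[
\Big| \int_{\Omega} f_{\lambda,\,\alpha_{\lambda}}(x,v_n)\,\varphi\,dx \Big| \le C'\bigl(1+\|v_n\|^{1+\delta}\bigr)\|\varphi\|, \quad \forall \varphi \in H_{0}^{1}(\Omega).
\]
Combined with the a.e.\ convergence of $v_n$ and the strong convergence $v_n \to v$ in $L^{a(1+\delta)}(\Omega)$, Vitali's convergence theorem provides the announced convergence of the Nemytskii operator in $H^{-1}(\Omega)$.

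For the strong convergence, I would test the equation against $v_n - v$: since $\|I'_{\lambda,\,\alpha_{\lambda}}(v_n)\|_{\ast} \to 0$ and $\{v_n - v\}$ is bounded in $H_{0}^{1}(\Omega)$, we get $\langle I'_{\lambda,\,\alpha_{\lambda}}(v_n), v_n - v\rangle \to 0$. Combining the $H^{-1}$-convergence above with the weak convergence $v_n - v \rightharpoonup 0$ in $H_{0}^{1}(\Omega)$ gives $\int_{\Omega} f_{\lambda,\,\alpha_{\lambda}}(x,v_n)(v_n - v)\,dx \to 0$, whence $\int_{\Omega} \nabla v_n \cdot \nabla(v_n - v)\,dx \to 0$. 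Together with the weak convergence, this yields $\|v_n\| \to \|v\|$ and therefore $v_n \to v$ strongly in $H_{0}^{1}(\Omega)$. The main obstacle is calibrating the exponent $\delta$ in Lemma \ref{gProperties-ch3}(ii) with the Hölder exponents induced by $q > N/2$; this is feasible precisely because $g$ is only slightly superlinear at infinity, so $\delta$ may be chosen arbitrarily small.
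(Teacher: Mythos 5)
Your proposal is correct and follows essentially the same (standard) route that the paper delegates to its references: it invokes Lemma \ref{boundPS-ch3} for boundedness, extracts a weakly convergent subsequence, and then upgrades to strong convergence via a subcritical growth estimate for $f_{\lambda,\,\alpha_{\lambda}}$, a H\"older/Sobolev calibration, and the Hilbert-space trick of testing against $v_n-v$. The paper itself only states that ``the strong convergence follows in a standard way'' and cites two external lemmas; your write-up fills in precisely the argument those references contain, with the exponent bookkeeping (choosing $\delta$ small enough that $a(1+\delta)<2^{\ast}$ is compatible with $q>N/2$) handled correctly.
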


\begin{proof}
Thanks to Lemma \ref{boundPS-ch3}, we know that such sequences
are bounded. The strong convergence follows in a standard way. See \cite[Lemma 11]{J_S_2013} or \cite[Lemma 5.2]{DC_F_2018} for two different approaches adapted to this setting.
\end{proof}

\section{A lower solution to \eqref{Plambda-ch3} below every upper solution}\label{III-ch3}

The aim of this section is to construct a lower solution to \eqref{Plambda-ch3} below every upper solution to the problem. The construction of this lower solution relies on the following a priori lower bound proved by the first author and L. Jeanjean in \cite{DC_J_2017}. Let us consider the boundary value problem
\begin{equation} \label{equationLB-ch3}
-\Delta u = d(x)u + \mu|\gradu|^2 + f(x)\,, \quad u \in \HLinfty\,,
\end{equation}
under the assumption
\begin{equation} \label{assumptionLB-ch3}
\left\{
\begin{aligned}
& \Omega \subset \RN\,, N \geq 2\,, \textup{ is a bounded domain with } \partial \Omega \textup{ of class } \mathcal{C}^{0,1}\,,\\
& d \textup{ and } f \textup{ belong to } L^q(\Omega) \textup{ for some } q > N/2\,, \\
& \mu > 0. 
\end{aligned}
\right.
\end{equation}

\begin{lemma} \rm \cite[Lemma 3.1]{DC_J_2017} \label{lowerBound-ch3} \it
Assume \eqref{assumptionLB-ch3}. Then, there exists a constant $M > 0$ with $M := M(N,q,|\Omega|,$ $\mu,\|d^{+}\|_q,\|f^{-}\|_q) > 0$ such that, every $u \in H^{1}(\Omega) \cap \Linfty$ upper solution to \eqref{equationLB-ch3} satisfies 
\[ \min_{\overline{\Omega}} u > -M\,.\]
\end{lemma}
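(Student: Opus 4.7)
The plan is to reduce to a sub-solution problem via $w := -u$, and to run a Stampacchia-type iteration on its super-level sets, using an exponential test function that exploits the \emph{good-sign} quadratic gradient term. Since $u^{-} \in H_0^1(\Omega)$ (as $u$ is an upper solution), $w$ satisfies $w^{+} \in H_0^1(\Omega)$ and
\[
\int_{\Omega} \nabla w \cdot \nabla \varphi \, dx + \mu \int_{\Omega} |\nabla w|^{2} \varphi \, dx \leq \int_{\Omega} \bigl(d(x) w - f(x)\bigr) \varphi \, dx
\]
for every $0 \leq \varphi \in H_0^1(\Omega) \cap L^{\infty}(\Omega)$, so the claim $u \geq -M$ becomes $\sup_{\Omega} w \leq M$.

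For each $k > 0$ I test with $\varphi_k := g(w)$, where $g(s) := (1 - e^{-\mu(s - k)})^{+}$. This choice is dictated by the ODE identity $g'(s) + \mu g(s) = \mu \chi_{\{s > k\}}$, which makes the linear and quadratic gradient terms telescope, yielding, after using $dw \leq d^{+}w$ and $-f \leq f^{-}$ on $A_k := \{w > k\} \subset \{w > 0\}$,
\[
\mu \int_{A_k} |\nabla w|^{2} \, dx \leq \int_{A_k} \bigl(d^{+}(x) w + f^{-}(x)\bigr) g(w) \, dx.
\]
Notice that $f^{+}$ has disappeared entirely, matching the announced dependence on only $\|f^{-}\|_q$. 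Writing $w = v_k + k$ on $A_k$ with $v_k := (w-k)^{+} \in H_0^1(\Omega)$, using $g(w) \leq \min(1, \mu v_k)$, and applying Hölder (the hypothesis $q > N/2$ is equivalent to $2q' < 2^{\ast}$) together with the Sobolev embedding $H_0^1 \hookrightarrow L^{2^{\ast}}$ produces
\[
\|v_k\|_{H_0^1}^{2} \leq C_1 \|d^{+}\|_q |A_k|^{\theta_1} \|v_k\|_{H_0^1}^{2} + C_2 \bigl(k \|d^{+}\|_q + \|f^{-}\|_q\bigr) |A_k|^{\theta_2} \|v_k\|_{H_0^1},
\]
with $\theta_1, \theta_2 > 0$ and $C_i$ depending only on $N, q, |\Omega|, \mu$.

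The last step is a Stampacchia iteration. One picks $k_0 = k_0(N, q, |\Omega|, \mu, \|d^{+}\|_q)$ large enough that $|A_{k_0}|$ — which is at worst $|\Omega|$ and can be made smaller by a first application of the estimate above — allows the first term to be absorbed into the left-hand side, giving $\|v_k\|_{2^{\ast}} \leq C\bigl(k \|d^{+}\|_q + \|f^{-}\|_q\bigr)|A_k|^{\theta_2}$ for $k \geq k_0$. Combined with Chebyshev $(h - k)|A_h|^{1/2^{\ast}} \leq \|v_k\|_{2^{\ast}}$ for $h > k$, this yields the Stampacchia-type inequality
\[
|A_h| \leq \frac{K(k)}{(h - k)^{2^{\ast}}} |A_k|^{\gamma}, \qquad \gamma := 2^{\ast}\theta_2,
\]
with $K(k) \sim (k \|d^{+}\|_q + \|f^{-}\|_q)^{2^{\ast}}$, and the exponent condition $\gamma > 1$ is again equivalent to $q > N/2$. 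Applying the classical Stampacchia iteration lemma on the geometric sequence $k_n := k_0 + d(1 - 2^{-n})$ — so as to keep $K(k_n)$ uniformly bounded — produces some $M$ depending only on the listed data with $|A_M| = 0$; hence $u \geq -M$ a.e., and continuity of $u$ (from the standard $W^{2,q}$ elliptic regularity available under \eqref{assumptionLB-ch3}) upgrades this to $\min_{\overline{\Omega}} u > -M$. The main obstacle is precisely the $k$-dependence of $K(k)$, which prevents a one-shot application of Stampacchia and forces the geometric-shift argument just described; the exponent bookkeeping (all $\theta_i, \gamma$ positive, $\gamma > 1$) ultimately amounts to the same threshold $q > N/2$ appearing throughout the hypothesis.
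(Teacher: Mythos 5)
The paper does not reprove this lemma; it simply cites \cite[Lemma 3.1]{DC_J_2017}, so there is no internal proof to compare against. Your direct Stampacchia argument is a reasonable route, the reduction to $w=-u$ is correct, the exponential test function $g(s)=(1-e^{-\mu(s-k)})^+$ with $g'+\mu g=\mu\chi_{\{s>k\}}$ is exactly the right choice, and the disappearance of $f^+$ and $d^-$ is handled properly. However, there is a genuine gap at the absorption step.

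Your energy estimate reads $\|v_k\|^2 \leq C_1\|d^+\|_q\,|A_k|^{\theta_1}\|v_k\|^2 + C_2(k\|d^+\|_q+\|f^-\|_q)|A_k|^{\theta_2}\|v_k\|$, and to absorb the first term you need $C_1\|d^+\|_q\,|A_k|^{\theta_1}<1$, i.e.\ $|A_k|$ small. You assert that $|A_{k_0}|$ ``can be made smaller by a first application of the estimate above'' with $k_0$ depending only on the data, but this is circular: the estimate only produces a bound on $\|v_k\|$ (and hence, via Chebyshev, smallness of $|A_h|$ for $h$ large) \emph{after} absorption, which is precisely what requires $|A_k|$ small. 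Without a smallness hypothesis on $\|d^+\|_q|\Omega|^{\theta_1}$ — which the lemma does not assume — the first application at $k=0$ gives nothing, and the threshold $k_0$ at which $|A_{k_0}|$ drops below $(C_1\|d^+\|_q)^{-1/\theta_1}$ a priori depends on the particular $u$, which destroys the claimed uniformity of $M$. The gap is fixable without changing your framework: the $\|v_k\|^2$ term only arises because you bound $g(w)\leq\mu v_k$ in \emph{both} pieces of $\int_{A_k}(d^+w+f^-)g(w)$. If instead you use $g(w)\leq 1$ in the piece $\int_{A_k}d^+v_k\,g(w)\,dx$ (keeping $g(w)\leq\mu v_k$ for the $kd^+$ and $f^-$ pieces), H\"older with exponent $q'$ (legal since $q>N/2\Rightarrow q'<2^*$) gives $\|v_k\|^2\leq C(1+k)(\|d^+\|_q+\|f^-\|_q)|A_k|^{\theta_2}\|v_k\|$, which is linear in $\|v_k\|$ and needs no absorption; starting at $k=0$ this yields a bound on $\|v_0\|_{2^*}$ depending only on the data, hence (Chebyshev) a $k_0$ depending only on the data with $|A_{k_0}|$ as small as needed, and your geometric-shift iteration then closes as described. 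Finally, a cosmetic point: invoking $W^{2,q}$ regularity to upgrade ``a.e.'' to a pointwise bound is not available since $u$ is only an upper solution, not a solution; simply enlarge $M$ by $1$ to get the strict inequality at the level of the essential infimum.
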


\begin{remark} 
The lower bound does not depend on $f^+$ and $d^{-}$.  \label{remuniflb-ch3}
\end{remark}

Having at hand this lower bound, we construct the desired lower solution to \eqref{Plambda-ch3}. The proof is inspired by \cite[Lemma 4.2]{DC_J_2017} and \cite[Proposition 4.2]{DC_F_2018}. Nevertheless, since $c_{\lambda} = \lambda c_{+} - c_{-}$ may change sign, several new ideas are needed.

\begin{prop} \label{propLowerSol-ch3}
Under the assumption \eqref{A1-ch3}, for any $\lambda \in \R$, 
there exists $\underline{u}_{\lambda} \in H^{1}(\Omega) \cap \Linfty$ lower solution to 
\eqref{Plambda-ch3} such that, for every $\beta$ upper solution to \eqref{Plambda-ch3}, 
it follows that $\min_{\overline\Omega}\big(\min\{0,\beta\}-\underline{u}_{\lambda}\big)>0$.
\end{prop}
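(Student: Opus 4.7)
The plan is to construct $\underline u_{\lambda}$ in two stages: a uniform a priori lower bound on $\min\{0,\beta\}$ valid for all upper solutions $\beta$, and then a separate construction of a strictly more negative lower solution.

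\medbreak

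\noindent\textbf{Step 1 (uniform lower bound on $\min\{0,\beta\}$).} Let $\beta$ be any upper solution to \eqref{Plambda-ch3} and set $\widetilde\beta := \min\{0,\beta\} = -\beta^-$. I would test the upper-solution inequality for $\beta$ against $T_{\varepsilon}(\beta)\varphi$, where $\varphi \in \HLinfty$ is non-negative and $T_{\varepsilon}\colon \mathbb{R} \to [0,1]$ is a smooth, non-increasing approximation of $\chi_{(-\infty,0)}$. Since $T_{\varepsilon}'(\beta)|\nabla\beta|^2\varphi \leq 0$, this term can be dropped in the resulting inequality, and letting $\varepsilon \to 0$ by dominated convergence gives
\[
\int_{\Omega} \nabla \widetilde\beta \cdot \nabla \varphi \, dx \;\geq\; \int_{\Omega} \bigl( c_{\lambda}(x)\,\widetilde\beta + \mu|\nabla \widetilde\beta|^2 + h(x)\,\chi_{\{\beta<0\}}\bigr)\varphi\, dx,
\]
i.e. $\widetilde\beta$ is an upper solution to $-\Delta w = c_{\lambda}(x)w + \mu|\nabla w|^2 + h(x)\chi_{\{\beta<0\}}$. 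Because $(c_{\lambda})^+ = \lambda c_+$ and $(h\chi_{\{\beta<0\}})^- \leq h^-$, the relevant $L^q$-norms do not depend on $\beta$, and Lemma \ref{lowerBound-ch3} provides a constant $M > 0$, depending only on $N$, $q$, $|\Omega|$, $\mu$, $\lambda\|c_+\|_q$ and $\|h^-\|_q$, such that $\widetilde\beta(x) > -M$ pointwise on $\overline\Omega$, uniformly in $\beta$.

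\medbreak

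\noindent\textbf{Step 2 (construction of $\underline u_{\lambda}$).} I would look for $\underline u_{\lambda}$ in the form $\underline u_{\lambda} := w - K$, with $K > M+1$ large and $w \in \HLinfty$ a solution of the limit-coercive auxiliary problem
\[
-\Delta w = -c_{-}(x)\,w + \mu|\nabla w|^2 + \widetilde h(x), \qquad w \in \HLinfty,
\]
for a suitably chosen $\widetilde h$. A direct substitution shows that $w - K$ is a lower solution to \eqref{Plambda-ch3} if and only if
\[
\widetilde h(x) \;\leq\; h(x) + K\,c_{-}(x) + \lambda\,c_{+}(x)\bigl(w(x) - K\bigr) \quad \text{a.e.\ in } \Omega,
\]
so selecting $\widetilde h := h + Kc_{-} - \lambda(K+L)c_{+}$ with $L \geq \|w\|_{\infty}$ enforces this pointwise inequality. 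For $K$ large, the extra term $-\lambda(K+L)c_{+}$ in $\widetilde h$ makes the associated quantity $m_{c_-}$ (with $h$ replaced by $\widetilde h$) strictly positive, so Theorem \ref{necessary and sufficient-ch3} guarantees existence of such a $w$. The circular dependence of $\widetilde h$ on $\|w\|_{\infty}$ is handled by the a priori $L^{\infty}$-bound available for the limit-coercive problem together with a suitable choice of $K$ that dominates that bound.

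\medbreak

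\noindent\textbf{Conclusion and main obstacle.} Combining Steps 1 and 2, one gets $\underline u_{\lambda} \leq -K + L < -M \leq \widetilde\beta$ pointwise on $\overline\Omega$ for every upper solution $\beta$, hence $\min_{\overline\Omega}(\min\{0,\beta\} - \underline u_{\lambda}) > 0$. The main obstacle lies in Step 2: because $c_{\lambda}$ changes sign, no constant $\underline u_{\lambda} \equiv -K$ can be a lower solution (the term $c_\lambda(-K) = -\lambda K c_+$ cannot be controlled by $h$ where $c_+ > 0$), so one must design a non-constant $w$ whose Laplacian and gradient absorb the $\lambda c_{+}\underline u_{\lambda}$ contribution, while simultaneously ensuring solvability of the auxiliary problem. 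This is precisely the point where, as the authors remark, several new ideas are needed beyond the constructions in \cite{DC_J_2017} and \cite{DC_F_2018}, both of which handle only the case $c \leq 0$.
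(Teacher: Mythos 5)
Your Step 1 is correct but is more work than needed: one applies Lemma \ref{lowerBound-ch3} directly to the upper solution $\beta$ of \eqref{Plambda-ch3}, obtaining $\min_{\overline\Omega}\beta > -M_{\lambda,1}$, and since $\min\{0,\beta(x)\}$ equals $0$ or $\beta(x)$ pointwise, one gets $\min\{0,\beta\} > -M_{\lambda,1}$ for free. There is no need to verify that $\min\{0,\beta\}$ is itself an upper solution of a modified equation.

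The real problem is in Step 2, and it is a genuine gap, not a detail. You claim that the perturbation $\widetilde h = h + Kc_- - \lambda(K+L)c_+$ ``makes $m_{c_-}$ (with $h$ replaced by $\widetilde h$) strictly positive'' for $K$ large, so that Theorem \ref{necessary and sufficient-ch3} yields $w$. This is false in general. For $u \in W_{c_-}$ one has $\int c_- u^2 = 0$, so
\[
\int_\Omega \bigl(|\nabla u|^2 - \mu\widetilde h\,u^2\bigr)\,dx \;=\; \int_\Omega \bigl(|\nabla u|^2 - \mu h\,u^2\bigr)\,dx \;+\; \mu\lambda(K+L)\int_\Omega c_+ u^2\,dx.
\]
The last term is $\geq 0$ but contributes \emph{nothing} on the set $W_{c_\lambda} = \{u : c_+u=0,\ c_-u=0,\ \|u\|=1\}$, which is contained in $W_{c_-}$. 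Under \eqref{A1-ch3} alone nothing forbids $h$ from having a large positive part supported in $\{c_\lambda=0\}$, in which case there is $u^*\in W_{c_\lambda}$ with $\int(|\nabla u^*|^2 - \mu h (u^*)^2) \le 0$, and then $m_{c_-}^{\widetilde h}\le 0$ for \emph{every} admissible $K,L$, so the auxiliary coercive problem has no solution and your construction stops. Proposition \ref{propLowerSol-ch3}, however, is stated under \eqref{A1-ch3} alone, with no assumption like $m_{c_-}>0$ or solvability of \eqref{P0-ch3}; your proof would impose a hidden extra hypothesis. (The secondary circularity you flag — $L$ must dominate $\|w\|_\infty$ while $\|w\|_\infty$ grows with $\widetilde h$, hence with $L$ — is also unresolved, but it is subordinate to the solvability issue.)

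The paper takes a different route precisely to avoid any solvability requirement. Instead of solving a \emph{limit-coercive} problem, one introduces the auxiliary problem
\[
-\Delta u = \lambda c_+(x)u + \mu|\nabla u|^2 - h^-(x) - \lambda M_{\lambda,1}c_+(x) - 1,\qquad u\in\HLinfty,
\]
whose coefficient $\lambda c_+\ge 0$ has a fixed sign. Lemma \ref{lowerBound-ch3} applied to this problem gives a second a priori bound $-M_{\lambda,2}$; truncating the nonlinearity below at $-M_{\lambda,2}$ one checks that the linear solution $\alpha\le 0$ of an explicit Poisson problem and the constant $0$ are ordered lower and upper solutions, so Theorem \ref{BMP1988-ch3} (which requires no coercivity assumption whatsoever, only the existence of ordered lower/upper solutions) produces a solution $v_\lambda$ with $\alpha\le v_\lambda\le 0$, which is then shown to lie above $-M_{\lambda,2}$ and hence solves the untruncated auxiliary problem. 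Setting $\underline u_\lambda := v_\lambda - M_{\lambda,1}\le -M_{\lambda,1}$ and using that $\underline u_\lambda < 0$, the sign-definite extra term $-c_-\underline u_\lambda\ge 0$ is absorbed when checking that $\underline u_\lambda$ is a lower solution of \eqref{Plambda-ch3}, and $\min\{0,\beta\}>-M_{\lambda,1}\ge\underline u_\lambda$ gives the separation. This is the ``new idea'' you correctly anticipate is needed — replace the coercive auxiliary problem by a truncated problem with the non-negative coefficient $\lambda c_+$, so the construction never has to invoke Theorem \ref{necessary and sufficient-ch3}.
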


\begin{proof}
We shall consider separately the cases $\lambda \leq 0$ and $\lambda > 0$. The case $\lambda \leq 0$ can be obtained exactly as in \cite[Proposition 4.2]{DC_F_2018}. We turn then to study the case $\lambda > 0$. Fixed $\lambda > 0$ arbitrary, let us denote by $M_{\lambda,1} > 0$ the constant given by Lemma \ref{lowerBound-ch3} applied to \eqref{Plambda-ch3} and let us introduce the auxiliary problem
\begin{equation} \label{aux1-ch3}
\left\{
\begin{aligned}
 -\Delta u & = \lambda c_{+}(x)u + \mu|\gradu|^2- h^{-}(x)-\lambda M_{\lambda,1}c_{+}(x)-1\,, \quad & \textup{ in } \Omega,\\
u & = 0\,, & \textup{ on } \partial \Omega.
\end{aligned}
\right.
\end{equation}
Thanks to Lemma \ref{lowerBound-ch3}, there exists $M_{\lambda,2}> 0$ such that, for every $\beta_1 \in H^{1}(\Omega) \cap \Linfty$ upper solution to \eqref{aux1-ch3}, we have $\beta_1 > - M_{\lambda,2}$. Now, 
introduce the problem
\begin{equation} \label{aux2-ch3}
\left\{
\begin{aligned}
 -\Delta u & = - \lambda M_{\lambda,2} c_{+}(x)- h^{-}(x)-\lambda M_{\lambda,1}c_{+}(x)-1\,, \quad & \textup{ in } \Omega,\\
u & = 0\,, & \textup{ on } \partial \Omega,
\end{aligned}
\right.
\end{equation}
and denote by $\alpha$ its solution. Since $- \lambda M_{\lambda,2} c_{+}(x)- h^{-}(x)-\lambda M_{\lambda,1} c_{+}(x)-1 < 0$, the weak maximum principle implies that $\alpha \leq 0$. Then, observe that, for every $\beta_1$ upper solution to \eqref{aux1-ch3}, we have that
\[ -\Delta \beta_1 \geq  \lambda c_{+}(x) \beta_1 + \mu |\nabla \beta_1|^2 - h^{-}(x) - \lambda M_{\lambda,1} c_{+}(x) - 1 \geq - \lambda M_{\lambda,2} c_{+}(x) - h^{-}(x) - \lambda M_{\lambda,1} c_{+}(x) - 1 = - \Delta \alpha\,, \quad \textup{in } \Omega\,. \]
Consequently, it follows that
\[ \left\{
\begin{aligned}
-\Delta \beta_1 & \geq - \Delta \alpha\,, \quad & \textup{ in } \Omega, \\
\beta_1 & \geq \alpha\,, & \textup{ on } \partial \Omega,
\end{aligned}
\right.
\]
and, by the comparison principle, that $\beta_1 \geq \alpha$. 
\medbreak
Now, we introduce the problem
\begin{equation} \label{aux3-ch3}
\left \{
\begin{aligned}
-\Delta u & = \lambda c_{+}(x)\widetilde{T}(u) + \mu |\gradu|^2 - h^{-}(x) - \lambda M_{\lambda,1} c_{+}(x) -1, 
& \textup{ in } \Omega, \\
u & = 0, & \textup{ on } \partial \Omega,
\end{aligned}
\right.
\end{equation}
where
\begin{equation*}
\widetilde{T}(s) = \left\{
\begin{aligned}
 -M_{\lambda,2}\,,  \qquad&\textup{ if } s \leq -M_{\lambda,2}\,,\\
 s\,,  \qquad&\textup{ if } s > -M_{\lambda,2}\,.
\end{aligned}
\right.
\end{equation*}
Observe that $\beta_1$ and $0$ are upper solutions to \eqref{aux3-ch3}. Recalling that the minimum of two upper solutions is an upper solution, it follows that $\overline{\beta} = \min\{0,\beta_1\}$ is an upper solution to \eqref{aux3-ch3}. As $\alpha$ is a lower solution to \eqref{aux3-ch3} with $\alpha \leq \overline{\beta}$, applying Theorem \ref{BMP1988-ch3}, we conclude the existence of $v_{\lambda}$ minimal solution to \eqref{aux3-ch3} with $\alpha \leq v_{\lambda} \leq \overline{\beta} = \min\{0,\beta_1\}$.
\medbreak
Now, observe that $v_{\lambda}$ is an upper solution to \eqref{aux1-ch3}. Hence, it follows that 
$v_{\lambda}  > -M_{\lambda,2}$ and so, that $v_{\lambda}$ is a solution to \eqref{aux1-ch3}.
\medbreak
Finally, let us introduce $\underline{u}_{\lambda} = v_{\lambda} - M_{\lambda,1}$ and observe that
\begin{equation*}
\begin{aligned}
-\Delta \underline{u}_{\lambda} & = \lambda c_{+}(x) v_{\lambda} + \mu |\nabla v_{\lambda}|^2 - h^{-}(x) - \lambda M_{\lambda,1}c_{+}(x) - 1 
\\ 
&  \leq (\lambda c_{+}(x) - c_{-}(x))\, \underline{u}_{\lambda} + \mu|\nabla \underline{u}_{\lambda}|^2 + h(x), \quad \textup{ in } \Omega.
\end{aligned}
\end{equation*}
Hence, we have that $\underline{u}_{\lambda}$ is a lower solution to \eqref{Plambda-ch3} with 
$\underline{u}_{\lambda} \leq - M_{\lambda,1}$. 
Thus, since every $\beta$ upper solution to \eqref{Plambda-ch3} satisfies $\min_{\overline \Omega}\beta> - M_{\lambda,1}$,  we have that $\underline{u}_{\lambda}$ is the desired lower solution. 
\end{proof}

\begin{remark}
The constant $\mu > 0$ can be replaced by a function $\mu \in \Linfty$ with $\mu(x) \geq \mu_1 > 0$ a.e. in $\Omega$ and the result still holds true.
\end{remark}

\section{Proof of Theorem \ref{th1-ch3}}
\label{section 6}

The aim of this section is to prove Theorem \ref{th1-ch3}. For every $\lambda \in \R$, let us denote by $\underline{u}_{\lambda} \in H^1(\Omega) \cap \Linfty$ the lower solution to \eqref{Plambda-ch3} constructed in Proposition \ref{propLowerSol-ch3}. We choose this $\underline{u}_{\lambda}$ as lower solution in \eqref{truncFunction-ch3} and 
we consider, for the sake of simplicity, the more compact notation $I_{\lambda}:= I_{\lambda,\, \alpha_{\lambda}}$ for the functional $I_{\lambda,\, \alpha_{\lambda}}$ defined in \eqref{Ilambda-ch3}. First of all, we are going to prove that $I_{\lambda}$ has a mountain-pass geometry for $\lambda > 0$ small enough. We begin proving some auxiliary estimates.

\begin{lemma}  \label{mountainPassGeometry0-ch3}
Fixed $\Lambda_1 > 0$ arbitrary. There exist $D_1>0$ and $D_2>0$ such that, for every $\lambda \in [0,\Lambda_1]$ and any $v \in H_0^1(\Omega)$, it follows that
\begin{equation} \label{mpg0-1-ch3b}
I_{\lambda}(-v^{-}) \geq \frac{1}{2} \|v^{-}\|^2  - D_1 \|v^-\|-D_2
\end{equation}
\end{lemma}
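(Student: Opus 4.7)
The plan is to estimate $\int_\Omega F_{\lambda,\,\alpha_\lambda}(x,-v^-)\,dx$ from above, exploiting that $-v^-\leq 0$ and the piecewise definition of $F_{\lambda,\,\alpha_\lambda}$. The first ingredient is the uniform boundedness of $\alpha_\lambda$ in $L^{\infty}(\Omega)$ for $\lambda\in[0,\Lambda_1]$. Inspecting the construction in Proposition \ref{propLowerSol-ch3}, the constant $M_{\lambda,1}$ furnished by Lemma \ref{lowerBound-ch3} depends on $\lambda$ only through $\|(\lambda c_+-c_-)^+\|_q=\lambda\|c_+\|_q\leq\Lambda_1\|c_+\|_q$ and $\|h^-\|_q$; the same remark applies to $M_{\lambda,2}$, so $\underline{u}_\lambda$ and hence $\alpha_\lambda=\frac{1}{\mu}(e^{\mu\underline{u}_\lambda}-1)$ admit a uniform $L^{\infty}$-bound $A=A(\Lambda_1)$ with $-1/\mu+\varepsilon\leq\alpha_\lambda\leq 0$ for some $\varepsilon>0$ not depending on $\lambda$.

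Next, I would split $\Omega=\Omega_1\cup\Omega_2$ where $\Omega_1=\{-v^-\geq\alpha_\lambda\}$ and $\Omega_2=\{-v^-<\alpha_\lambda\}$. On $\Omega_1$, one has $\alpha_\lambda\leq -v^-\leq 0$, so $-v^-$ lies in $[-1/\mu+\varepsilon,0]$, a bounded interval on which $G$ (by Lemma \ref{gProperties-ch3}) and $s\mapsto (1+\mu s)^2$ are bounded; thus by \eqref{FLambdaPos-ch3},
\[
|F_{\lambda,\,\alpha_\lambda}(x,-v^-)|\leq C_1\bigl(|c_\lambda(x)|+|h(x)|\bigr)\quad\text{a.e. on }\Omega_1,
\]
with $C_1$ depending only on $\Lambda_1$, $\mu$ and $A$. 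On $\Omega_2$, from \eqref{FLambdaNeg-ch3} and the $L^{\infty}$-bounds on $\alpha_\lambda$, $g(\alpha_\lambda)$, $G(\alpha_\lambda)$, one obtains an affine control
\[
|F_{\lambda,\,\alpha_\lambda}(x,-v^-)|\leq C_2\bigl(|c_\lambda(x)|+|h(x)|\bigr)\bigl(v^-(x)+1\bigr)\quad\text{a.e. on }\Omega_2,
\]
with $C_2$ uniform in $\lambda\in[0,\Lambda_1]$.

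Integrating these two estimates and using H\"older's inequality together with $c_+,c_-,h\in L^q(\Omega)$ for some $q>N/2$ (whose conjugate $q'$ satisfies $q'<2^*$, so $\|v^-\|_{q'}\leq C\|v^-\|$ by the Sobolev embedding), I get
\[
\int_\Omega\bigl|F_{\lambda,\,\alpha_\lambda}(x,-v^-)\bigr|\,dx\leq D_1\|v^-\|+D_2,
\]
with $D_1,D_2$ uniform for $\lambda\in[0,\Lambda_1]$. Since $\nabla(-v^-)=-\nabla v^-$ a.e. and $\|v^-\|^2=\int_\Omega|\nabla v^-|^2\,dx$, the gradient term of $I_\lambda(-v^-)$ equals $\frac{1}{2}\|v^-\|^2$, yielding
\[
I_\lambda(-v^-)\geq \frac{1}{2}\|v^-\|^2-D_1\|v^-\|-D_2,
\]
as claimed. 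The only slightly delicate point is the uniformity of the constants in $\lambda\in[0,\Lambda_1]$, which I resolve once and for all at the beginning via the explicit dependence in Lemma \ref{lowerBound-ch3}; the rest reduces to bookkeeping.
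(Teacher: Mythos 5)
Your proof is correct and follows essentially the same strategy as the paper's: isolate the gradient term, which evaluates exactly to $\tfrac12\|v^-\|^2$, estimate the $F_{\lambda,\alpha_\lambda}$ term by splitting over $\{-v^-\geq\alpha_\lambda\}$ and $\{-v^-<\alpha_\lambda\}$, exploit boundedness of $G$ and $g$ on the range of $-v^-$ together with the $L^q$-regularity of $c_\pm,h$ and the Sobolev embedding.

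The one place where you work harder than necessary is the opening paragraph, where you track the $\lambda$-dependence of $M_{\lambda,1}$, $M_{\lambda,2}$ through the construction in Proposition \ref{propLowerSol-ch3} to get a uniform $\varepsilon>0$ with $\alpha_\lambda\geq -1/\mu+\varepsilon$. This uniform strict separation from $-1/\mu$ is not needed. Since $\underline u_\lambda\leq 0$ by construction, one has $\alpha_\lambda\in\,]-1/\mu,0]\subset[-1/\mu,0]$ for every $\lambda$, and by Lemma \ref{gProperties-ch3}(i) the functions $g$ and $G$ extend continuously (with value $0$) to $s=-1/\mu$, so $\max_{[-1/\mu,0]}G$ and $\max_{[-1/\mu,0]}|g|$ are finite constants independent of $\lambda$. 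The paper simply uses this fixed worst-case interval, which makes the $\lambda$-dependence of $\alpha_\lambda$ irrelevant to the bound. The only uniformity actually needed — and which you also invoke — is $\lambda\leq\Lambda_1$ in the factor $\lambda c_+$, which is immediate. The rest of your argument (Hölder with conjugate exponent $q'<2^*$, then Sobolev) matches the paper's.
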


\begin{proof}
First of all, observe that for all $v \in H_0^1(\Omega)$ we can write
\begin{equation*}
\begin{aligned}
I_{\lambda}(-v^{-})  = \frac{1}{2}\|v^{-}\|^2
& - \int_{ \{0 \geq v > \alpha_{\lambda} \} } \left( (\lambda c_{+}(x) - c_{-}(x)) G(v) + \frac{1}{2\mu} (1+\mu v)^2 h(x) \right) \,dx \\
& - \int_{ \{ v \leq \alpha_{\lambda}\} } \Big[ (\lambda c_{+}(x) - c_{-}(x)) g(\alpha_{\lambda}) + (1+\mu \alpha_{\lambda}) h(x) \Big] (v - \alpha_{\lambda}) \,dx \\
& - \int_{ \{v \leq \alpha_{\lambda}\} } \left( (\lambda c_{+}(x) - c_{-}(x)) G(\alpha_{\lambda}) + \frac{1}{2\mu} (1+\mu \alpha_{\lambda})^2 h(x) \right)\, dx.
\end{aligned}
\end{equation*}
Hence, using that, for all $\lambda\in \mathbb R$, $\alpha_{\lambda} \in [-1/\mu, 0]$ and Lemma \ref{gProperties-ch3}, i), we have that, for all $\lambda\in [0,\Lambda_1]$,
\begin{equation*}
\begin{aligned}
\displaystyle
I_{\lambda}(-v^{-}) \geq \frac{1}{2}\|v^{-}\|^2 & - \int_{\Omega} \left( \lambda c_{+}(x) \max_{ [-1/\mu,0] } G + \frac{1}{2\mu}  h^+(x) \right) dx \\
& - \int_{ \{ v \leq \alpha_{\lambda}\} } \Big[ \lambda c_{+}(x) g(\alpha_{\lambda}) - (1+\mu \alpha_{\lambda}) h^-(x) \Big] (v - \alpha_{\lambda}) dx \\
\geq \frac{1}{2}\|v^{-}\|^2 & - \int_{\Omega} \left( \Lambda_1 c_{+}(x) \max_{ [-1/\mu,0] } G + \frac{1}{2\mu} h^+(x) \right) dx  - \int_{\Omega} \left( \Lambda_1 c_{+}(x) \max_{ [-1/\mu,0] } |g| + h^-(x) \right) v^{-}\, dx.
\end{aligned}
\end{equation*}
The estimate \eqref{mpg0-1-ch3b} follows immediately from the Sobolev inequality.
\end{proof}

\begin{lemma} \label{mountainPassGeometry1-ch3}
Assume \eqref{A1-ch3} and suppose that $m_{c_{-}}> 0$. Then, there exist constants $\Lambda > 0$ and $R > 0$ such that, if $0 \leq \lambda \leq \Lambda$, then $I_{\lambda}(v) \geq  I_{\lambda}(0)+\frac12$ for all $v \in \partial D$ with $D:= \{ v \in H_0^1(\Omega): \|v^{+}\| < R\}$.
\end{lemma}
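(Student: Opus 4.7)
The plan is to split $v=v^+-v^-$ in the identity
\[I_\lambda(v) - I_\lambda(0) = \tfrac{1}{2}\|v^+\|^2 + \tfrac{1}{2}\|v^-\|^2 - \int_\Omega [F_\lambda(x,v) - F_\lambda(x,0)]\,dx\]
and to treat the two signs separately. On $\{v \leq 0\}$ the integrand is affine in $v$ (using $\alpha_\lambda \in [-1/\mu, 0]$ and continuity of $g$), so, arguing as in Lemma \ref{mountainPassGeometry0-ch3}, one obtains uniformly in $\lambda\in[0,\Lambda_1]$ the estimate
\[\tfrac{1}{2}\|v^-\|^2 - \int_{\{v<0\}} [F_\lambda(x,v)-F_\lambda(x,0)]\,dx \geq -K\]
for some $K>0$ depending only on $\Lambda_1$, $\mu$, $\Omega$ and the data. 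Using $c_\lambda = \lambda c_+ - c_-$ and the explicit formula for $F_\lambda$ when $v \geq 0$, the problem thus reduces to a lower bound of order $R^2$ on $\partial D = \{\|v^+\|=R\}$ for
\[\tfrac{1}{2}\|v^+\|^2 + \int_\Omega c_- G(v^+)\,dx - \tfrac{\mu}{2}\int_\Omega h(v^+)^2\,dx - \lambda\int_\Omega c_+ G(v^+)\,dx - \int_\Omega v^+ h\,dx.\]

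The key technical ingredient is to exploit $m_{c_-}>0$ through the following coercivity estimate: there exist $M>0$ and $\varepsilon_0 > 0$ such that
\[\|w\|^2 + M\int_\Omega c_-\, w^2\,dx - \mu\int_\Omega h\, w^2\,dx \geq \varepsilon_0 \|w\|^2 \qquad \forall\, w\in H_0^1(\Omega).\]
I would prove this by contradiction: a normalized sequence $\{w_n\}$ violating the estimate with $M_n\to+\infty$ must satisfy $\int c_- w_n^2 \to 0$ and $\|w_n\|^2 \leq \mu\int h w_n^2 + o(1)$. By compactness of $H_0^1 \hookrightarrow L^r$ for $r<2^*$ (which, since $h\in L^q$ with $q>N/2$, is enough to pass to the limit in $\int h w_n^2$), any weak limit $w$ satisfies $c_- w = 0$ a.e.\ together with $\|w\|^2 \leq \mu\int h w^2$; the second inequality rules out $w\equiv 0$, so a renormalization of $w$ lies in $W_{c_-}$ and makes the Rayleigh-type quotient non-positive, contradicting $m_{c_-}>0$.

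To use this coercivity with coefficient $M$, I strengthen the bound $G(s) \geq s^2/2$ of Lemma \ref{gProperties-ch3} iv) to $G(s) \geq \tfrac{M}{2}s^2 - A_M$ for all $s\geq 0$ (a consequence of $g(s)/s\to+\infty$ from part iii), valid with some $A_M>0$), yielding $\int c_- G(v^+) \geq \tfrac{M}{2}\int c_-(v^+)^2 - A_M\|c_-\|_1$. Combining gives $\tfrac{1}{2}\|v^+\|^2 + \int c_- G(v^+) - \tfrac{\mu}{2}\int h(v^+)^2 \geq \tfrac{\varepsilon_0}{2}\|v^+\|^2 - A_M\|c_-\|_1$. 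Lemma \ref{gProperties-ch3} ii) with $\delta>0$ small so that $(2+\delta)q/(q-1)\leq 2^*$, together with H\"older and Sobolev, bounds $\lambda\int c_+ G(v^+)\,dx \leq C\lambda\|v^+\|^{2+\delta}$; a Sobolev estimate on $\int v^+ h$ gives $C_h\|v^+\|$. Collecting, on $\partial D$,
\[I_\lambda(v) - I_\lambda(0) \geq \tfrac{\varepsilon_0}{2}R^2 - C_h R - C\lambda R^{2+\delta} - (K+A_M\|c_-\|_1),\]
so it suffices to fix $R$ large enough that the first, second and last terms together exceed $1$, and subsequently $\Lambda$ small enough that $C\Lambda R^{2+\delta} \leq \tfrac{1}{2}$. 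The main obstacle is the coercivity estimate above; the remaining steps are routine given Lemmas \ref{mountainPassGeometry0-ch3} and \ref{gProperties-ch3}.
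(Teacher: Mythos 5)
Your proof is correct and follows the same overall structure as the paper's: split $I_\lambda(v) = I_\lambda(v^+) + I_\lambda(-v^-) - I_\lambda(0)$, bound $I_\lambda(-v^-)$ below by a constant independent of $\lambda\in[0,\Lambda_1]$ via Lemma~\ref{mountainPassGeometry0-ch3}, estimate the $\lambda$-dependent part of $I_\lambda(v^+)$ by $C\lambda\|v^+\|^{2+\delta}$ using Lemma~\ref{gProperties-ch3}~ii), and conclude by the coercivity of the $\lambda=0$ functional on the positive cone. The one place where you genuinely diverge is in how that coercivity is obtained: the paper simply invokes \cite[Proposition~6.1]{DC_F_2018}, whereas you prove it from scratch in two steps --- first a quadratic coercivity estimate $\|w\|^2 + M\int_\Omega c_- w^2 - \mu\int_\Omega h\,w^2 \geq \varepsilon_0\|w\|^2$, established by a compactness-and-contradiction argument that extracts a normalized weak limit $w$ with $c_-w\equiv 0$ and $\|w\|^2 \leq \mu\int h w^2$ (violating $m_{c_-}>0$ after renormalization), and then an upgrade to coercivity of $I_0(v^+)$ via the superquadratic bound $G(s)\geq \tfrac{M}{2}s^2 - A_M$ from Lemma~\ref{gProperties-ch3}~iii). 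This is a clean, self-contained way to extract exactly what the hypothesis $m_{c_-}>0$ delivers, at the cost of being longer than a citation; it also yields a genuinely quadratic lower bound $I_0(v^+)\geq \tfrac{\varepsilon_0}{2}\|v^+\|^2 - C_h\|v^+\| - \text{const}$, which is stronger than the mere ``coercive'' statement the paper uses. One small imprecision: on $\{v<0\}$ the integrand $F_\lambda(x,\cdot)$ is affine only for $v<\alpha_\lambda$, not on all of $\{v<0\}$; but since you defer to the argument of Lemma~\ref{mountainPassGeometry0-ch3}, which treats the region $\{\alpha_\lambda\leq v<0\}$ separately via boundedness of $G$ and $g$ on $[-1/\mu,0]$, the conclusion is unaffected.
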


\begin{proof}
Let us begin with some preliminary observations. First of all, we fix $\Lambda_1 > 0$ arbitrary  and, by Lemma \ref{mountainPassGeometry0-ch3}, we know that, for all $\lambda \in [0,\Lambda_1]$ and all $v \in H_0^1(\Omega)$, \eqref{mpg0-1-ch3b} holds. This
 implies the existence of $D_3$ (independent of $\lambda$) such that,  for all $\lambda \in [0,\Lambda_1]$ and all $v \in H_0^1(\Omega)$,
\begin{equation} \label{mpg1-2-ch3}
I_{\lambda}(-v^{-}) \geq - D_3. 
\end{equation}
Now, by the definition of $I_{\lambda}$ and Lemma \ref{gProperties-ch3}, observe that, for any $\lambda \geq 0$ and any $\delta > 0$, there exists $D_4 > 0$ (independent of $\lambda$) such that, for all $v \in H_0^1(\Omega)$,
\begin{equation} \label{mpg1-3-ch3}
I_{\lambda}(v^{+}) \geq I_0(v^{+}) - \lambda D_4 (1+\|v^{+}\|^{2+\delta}). 
\end{equation}
Also, since $m_{c_{-}} > 0$ by hypothesis, we know that $I_0$ is coercive (see \cite[Proposition 6.1]{DC_F_2018}) and so, that there exists $R > 0$ such that, for all $v \in H_0^1(\Omega)$ with $\|v^{+}\| = R$, 
\begin{equation} \label{mpg1-4-ch3}
I_0(v^{+}) \geq -\frac{1}{\mu}\int_{\Omega}h(x)dx + 1 + D_3. 
\end{equation}
Gathering \eqref{mpg1-3-ch3} and \eqref{mpg1-4-ch3} we deduce the existence of $0 < \Lambda \leq \Lambda_1$ such that, for all $\lambda \in [0,\Lambda]$ and all $v \in H_0^1(\Omega)$ with $\|v^{+}\| = R$, the following inequality holds
\begin{equation} \label{mpg1-5-ch3}
I_{\lambda}(v^{+}) \geq -\frac{1}{\mu}\int_{\Omega}h(x)dx + \frac{1}{2} + D_3. 
\end{equation}
Now, for the constants $\Lambda > 0$ and $R > 0$ previously given, we define $D:= \{ v \in H_0^1(\Omega): \|v^{+}\| < R\}$ and consider an arbitrary $\lambda \in [0,\Lambda]$. In order to finish the proof, we are going to show that 
\[ I_{\lambda}(v) \geq I_{\lambda}(0) + \frac{1}{2}, \quad \forall\ v \in \partial D.\]
Let $v \in \partial D$ fixed but arbitrary. By \eqref{mpg1-2-ch3}, \eqref{mpg1-5-ch3} and the fact that $I_{\lambda}(0) = -\frac{1}{2\mu} \int_{\Omega} h(x) dx$, we directly obtain that
\begin{equation*}
\begin{aligned}
I_{\lambda}(v) = I_{\lambda}(v^{+}) + I_{\lambda}(-v^{-}) - I_{\lambda}(0) \geq -\frac{1}{\mu} \int_{\Omega} h(x) dx + \frac{1}{2} + D_3 - D_3 + \frac{1}{2\mu} \int_{\Omega} h(x) dx = I_{\lambda}(0) + \frac{1}{2}.
\end{aligned}
\end{equation*}
\end{proof}

\begin{lemma} \label{mountainPassGeometry2-ch3} 
Assume \eqref{A1-ch3}. For any $\lambda > 0$, $M > 0$ and $R > 0$, there exists $w \in \H$ such that $\|w^{+}\| > R$ and $I_{\lambda}(w) \leq -M.$
\end{lemma}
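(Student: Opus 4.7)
The strategy is to take $w = t\varphi$ for a well-chosen nonnegative $\varphi \in \mathcal{C}_c^\infty(\Omega)$ and then send $t \to +\infty$. Since $\varphi \geq 0 \geq \alpha_\lambda$, only the upper branch of $F_{\lambda,\alpha_\lambda}$ contributes, giving
\[
I_\lambda(t\varphi) = \frac{t^2}{2}\|\varphi\|^2 - \int_\Omega c_\lambda(x) G(t\varphi)\, dx - \frac{1}{2\mu}\int_\Omega (1+\mu t\varphi)^2 h(x)\, dx.
\]
The plan is to choose $\varphi$ so that $\int_\Omega c_\lambda(x)\varphi^2(x)\, dx > 0$ and then exploit the slightly superquadratic growth of $G$ recorded in Lemma \ref{gProperties-ch3} iii).

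To produce such a $\varphi$, we use the disjoint-support condition $c_+(x)c_-(x) = 0$ a.e., which implies $\{c_\lambda > 0\} \supseteq \{c_+ > 0\}$, a set of positive measure since $c_+ \gneqq 0$. Picking a Lebesgue density point $x_0$ of $\{c_+ > 0\}$ (which is automatically a Lebesgue point of $c_-$ with $c_-(x_0) = 0$), and letting $\varphi$ be a smooth bump supported in a small ball $B_r(x_0) \subset \Omega$, we obtain $\int c_+ \varphi^2\, dx > 0$ of order $r^N$, while $\int c_- \varphi^2\, dx \to 0$ as $r \to 0$ by absolute continuity of the Lebesgue integral. Taking $r$ small enough yields $\lambda \int c_+\varphi^2\, dx > \int c_-\varphi^2\, dx$, i.e.\ $\int c_\lambda\varphi^2\, dx > 0$.

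From the explicit form \eqref{g-ch3}, one checks that $G(s) \sim \tfrac12 s^2 \ln s$ as $s \to +\infty$, and that for any bounded, compactly supported $\varphi \geq 0$, the quotient $G(t\varphi(x))/(t^2 \ln t)$ is uniformly bounded in $t \geq 2$ by a constant multiple of $\chi_{\mathrm{supp}(\varphi)}(x)$ while converging pointwise to $\varphi^2(x)/2$. Since $c_\pm \in L^1$ over the compact support of $\varphi$, dominated convergence yields
\[
\lim_{t \to +\infty} \frac{1}{t^2 \ln t}\int_\Omega c_\lambda(x) G(t\varphi)\, dx = \frac{1}{2}\int_\Omega c_\lambda(x)\varphi^2(x)\, dx.
\]
Since the remaining terms of $I_\lambda(t\varphi)$ are $O(t^2)$, one obtains $I_\lambda(t\varphi)/(t^2 \ln t) \to -\tfrac12 \int c_\lambda \varphi^2\, dx < 0$, so $I_\lambda(t\varphi) \to -\infty$. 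For $t$ large enough we thus have both $\|(t\varphi)^+\| = t\|\varphi\| > R$ and $I_\lambda(t\varphi) \leq -M$, proving the lemma.

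The main obstacle is the borderline growth of $G$: since $G$ is only slightly superquadratic (no Ambrosetti--Rabinowitz condition holds for $I_\lambda$), one must work at the precise logarithmic scale $t^2 \ln t$ and exhibit a dominating function adapted to this scale. A naive polynomial growth bound $G(s) \leq Cs^{2+\delta}$ would be too coarse to separate the $c_+$ and $c_-$ contributions and would force further restrictions on the geometry of $\mathrm{supp}(c_\pm)$.
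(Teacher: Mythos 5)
Your proof is correct but takes a genuinely different route from the paper's. The paper chooses $v\in\mathcal{C}_0^\infty(\Omega)$ with $v\geq 0$, $c_+v\not\equiv 0$ and $c_-v\equiv 0$; then the $c_-$ contribution drops out of $I_\lambda(tv)$ entirely and the remaining term $-\lambda t^2\int c_+v^2\frac{G(tv)}{t^2v^2}\,dx$ tends to $-\infty$ using only $G(s)/s^2\to+\infty$. You instead work at the sharper scale $t^2\ln t$, let both $c_+$ and $c_-$ contribute, and only arrange $\int_\Omega c_\lambda\varphi^2\,dx>0$ via a Lebesgue-point argument. Your version is in fact more robust: the paper's choice of $v$ implicitly requires a nonempty open set $U$ with $|U\cap\{c_->0\}|=0$ and $|U\cap\{c_+>0\}|>0$, which need not exist under \eqref{A1-ch3} alone (e.g.\ take $\{c_+>0\}$ a fat Cantor-type set of positive measure and $\{c_->0\}$ its open dense complement: any smooth $v$ with $c_-v\equiv 0$ must vanish on a dense open set, hence identically, so $c_+v\equiv 0$). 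Your Lebesgue-point construction works unconditionally under \eqref{A1-ch3}.

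Two small points of precision in your write-up. First, a density point of $\{c_+>0\}$ is not automatically a Lebesgue point of $c_-$; choose $x_0$ to be simultaneously a Lebesgue point of $c_+$ and of $c_-$ with $c_+(x_0)>0$ (the set of such points has full measure in $\{c_+>0\}$, and the disjoint-support condition then forces $c_-(x_0)=0$). Second, ``absolute continuity'' only tells you that both $\int c_+\varphi^2\,dx$ and $\int c_-\varphi^2\,dx$ tend to $0$ as $r\to 0$; the rate separation you actually need, namely $\int c_+\varphi^2\,dx\gtrsim r^N$ while $\int c_-\varphi^2\,dx=o(r^N)$, is precisely what the Lebesgue-point property of $c_+$ and $c_-$ at $x_0$ delivers. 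With these adjustments the argument is airtight.
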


\begin{proof}
Since $c_{+} \not \equiv 0$, we can choose $v \in \mathcal{C}_0^{\infty}(\Omega)$ such that $v \geq 0$, $c_{+} v \not \equiv 0$ and $c_{-}v \equiv 0$. Moreover, let us take $t \in \R^{+}$, $t \geq 1$. 
As $\alpha_{\lambda} \leq 0$, observe that
\[ \begin{aligned}
I_{\lambda}(tv) 
&
 \leq 
 \frac{1}{2} t^2 \int_{\Omega} 
\big( |\gradv|^2 - \mu h(x) v^2 \big) \, dx - \lambda t^2  \int_{\Omega} c_{+}(x) v^2 \frac{G(tv)}{t^2 v^2} dx + \frac{t}{\mu} \|1+\mu v\|_{\infty} \|h^{-}\|_1 
\\
& = t^2 \left[  \frac{1}{2} \int_{\Omega} \big( |\gradv|^2 - \mu h(x) v^2 \big) \,dx - \lambda \int_{\Omega} c_{+}(x) v^2 \frac{G(tv)}{t^2 v^2} dx + \frac{1}{t} \frac{1}{\mu} \|1+\mu v\|_{\infty} \|h^{-}\|_1 \right]\,.
\end{aligned}
\]
Now, since by Lemma \ref{gProperties-ch3}, we have 
\[ \lim_{t \to \infty} \lambda \int_{\Omega} c_{+}(x) v^2 \frac{G(tv)}{t^2 v^2} dx = +\infty\,,\]
we deduce that $\displaystyle \lim_{t \to \infty} I_{\lambda}(tv) = -\infty$ and the lemma follows.
\end{proof}

Gathering Lemmas \ref{mountainPassGeometry1-ch3} and \ref{mountainPassGeometry2-ch3} we deduce that, for $\lambda > 0$ small enough, $I_{\lambda}$ possess a mountain-pass geometry. Once this is proved, we first show the existence of a local minimum of $I_{\lambda}$ and then we prove Theorem \ref{th1-ch3}.

\begin{prop} \label{localMinimumVariational-ch3}
Assume \eqref{A1-ch3} and suppose that $m_{c_{-}} > 0$ and that $\lambda \geq 0$ is small enough in order to ensure that the conclusion of Lemma \ref{mountainPassGeometry1-ch3} holds. Then, $I_{\lambda}$ possesses a critical point $v$ with $I_{\lambda}(v) \leq I_{\lambda}(0)$, which is a local minimum.
\end{prop}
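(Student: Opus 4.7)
The plan is to obtain the critical point via constrained minimization of $I_{\lambda}$ on the closed set $\overline{D}$, and then use the mountain-pass geometry to force the minimizer into the interior $D$, where it is automatically a local minimum of the unconstrained functional.

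First I would establish that $c := \inf_{\overline{D}} I_{\lambda}$ is finite. For any $v \in H_0^1(\Omega)$, writing $v = v^+ - v^-$, using that the two parts have disjoint supports and that $\alpha_{\lambda} \leq 0$, a direct computation based on \eqref{Ilambda-ch3}--\eqref{FLambdaNeg-ch3} gives the decomposition
\[
I_{\lambda}(v) = I_{\lambda}(v^+) + I_{\lambda}(-v^-) - I_{\lambda}(0),
\]
which is the same identity already exploited in the proof of Lemma \ref{mountainPassGeometry1-ch3}. For $v \in \overline{D}$ one has $\|v^+\| \leq R$, so $I_{\lambda}(v^+)$ stays bounded (as $I_{\lambda}$ is continuous on bounded subsets), while Lemma \ref{mountainPassGeometry0-ch3} yields $I_{\lambda}(-v^-) \geq \tfrac{1}{2}\|v^-\|^2 - D_1 \|v^-\| - D_2$; hence $c \in \R$. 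Moreover, since $0 \in D$ one has $c \leq I_{\lambda}(0)$, and Lemma \ref{mountainPassGeometry1-ch3} gives $\inf_{\partial D} I_{\lambda} \geq I_{\lambda}(0) + \tfrac{1}{2} > c$.

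Next I would apply Ekeland's variational principle on the complete metric space $\overline{D}$ to produce a sequence $\{v_n\} \subset \overline{D}$ with $I_{\lambda}(v_n) \to c$ and
\[
I_{\lambda}(w) \geq I_{\lambda}(v_n) - \epsilon_n \|w - v_n\| \qquad \text{for all } w \in \overline{D},
\]
for some $\epsilon_n \to 0^+$. Because $I_{\lambda}(v_n) \to c < \inf_{\partial D} I_{\lambda}$, the points $v_n$ eventually lie in the interior $D$ of $\overline{D}$, and then plugging $w = v_n + t h$ with $t > 0$ small and $h \in H_0^1(\Omega)$ arbitrary into the Ekeland inequality yields $\|I_{\lambda}'(v_n)\|_{\ast} \leq \epsilon_n \to 0$. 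Thus $\{v_n\}$ is a Palais-Smale sequence at level $c$, and Proposition \ref{strongPS-ch3} produces $v \in \overline{D}$ with $v_n \to v$ strongly, $I_{\lambda}(v) = c$ and $I_{\lambda}'(v) = 0$. Since $c < \inf_{\partial D} I_{\lambda}$, the point $v$ lies in the open set $D$, hence $v$ minimizes $I_{\lambda}$ on an $H_0^1$-neighbourhood of itself and is the desired local minimum with $I_{\lambda}(v) \leq I_{\lambda}(0)$.

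The main point to watch is the boundedness from below on $\overline{D}$: the decomposition above is clean precisely because $\alpha_{\lambda} \leq 0$, so $v^+$ always falls into the regime $s \geq \alpha_{\lambda}(x)$ where $F_{\lambda,\, \alpha_{\lambda}}$ has its ``untruncated'' expression \eqref{FLambdaPos-ch3}. Everything else combines the mountain-pass geometry of Lemmas \ref{mountainPassGeometry1-ch3}--\ref{mountainPassGeometry2-ch3} with the Palais-Smale condition of Proposition \ref{strongPS-ch3} in a routine way; in particular, one could equivalently invoke Theorem \ref{characterizacionMinimizer-ch3} afterwards to re-derive the local-minimum character from the minimizing value.
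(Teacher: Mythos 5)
Your proof is correct, but it takes a genuinely different route from the paper's. The paper argues by the direct method of the calculus of variations: it takes a minimizing sequence $\{v_n\} \subset D$ for $m := \inf_{D} I_{\lambda}$, deduces from \eqref{mpg0-1-ch3b} that the sequence is bounded, extracts a weakly convergent subsequence $v_n \rightharpoonup v$, and then invokes the weak lower semicontinuity of both $v \mapsto \|v^{+}\|$ and of $I_{\lambda}$ itself (the latter following from convexity of the quadratic term together with compactness of the Sobolev embedding for the subcritical lower-order integrals) to conclude $\|v^{+}\| \leq R$ and $I_{\lambda}(v) \leq m$; the strict jump of $I_{\lambda}$ across $\partial D$ from Lemma \ref{mountainPassGeometry1-ch3} then forces $v$ into the interior. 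In contrast, you construct an almost-minimizing Palais--Smale sequence on $\overline{D}$ via Ekeland's variational principle and feed it into Proposition \ref{strongPS-ch3}. Both lines work: the paper's is softer in that it never appeals to the Palais--Smale condition (which is only used later for the mountain-pass step) but it implicitly relies on the weak lower semicontinuity of $I_{\lambda}$; your line is heavier on machinery (Ekeland plus Proposition \ref{strongPS-ch3}, which is applicable because $m_{c_{-}} > 0$ implies $m_{c_{\lambda}} > 0$) but bypasses the w.l.s.c.\ verification and produces strong convergence of the minimizing sequence for free. Two small remarks worth making explicit: the continuity of $v \mapsto v^{+}$ on $H_0^1(\Omega)$ is what guarantees $D$ is open, hence $\overline{D}$ is closed and complete so Ekeland applies; and your decomposition $I_{\lambda}(v) = I_{\lambda}(v^{+}) + I_{\lambda}(-v^{-}) - I_{\lambda}(0)$, valid precisely because $\alpha_{\lambda} \leq 0$ keeps $v^{+}$ on the untruncated branch \eqref{FLambdaPos-ch3}, is exactly the identity the paper already uses inside the proof of Lemma \ref{mountainPassGeometry1-ch3}, so this part is consistent with the paper.
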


\begin{proof}
From Lemma \ref{mountainPassGeometry1-ch3}, we know that there exist $R > 0$ such that
\[ m:= \inf_{v \in D} I_{\lambda}(v) \leq I_{\lambda}(0) \qquad \textup{ and } \qquad 
I_{\lambda}(v) \geq I_{\lambda}(0)+ \frac{1}{2} \,\,\textup{ if }\,\,  v \in \partial D,\]
where $D:= \{ v \in H_0^1(\Omega): \|v^{+}\| < R\}$. Let $\{v_n\} \subset D$ be such that $I_{\lambda}(v_n) \to m$. By the definition of $D$ and \eqref{mpg0-1-ch3b}, we deduce that $\{v_n\}$ is bounded and so, up to a subsequence, it follows that $v_n \rightharpoonup v \in H_0^1(\Omega)$. By the weak lower semicontinuity of the norm and of the functional $I_{\lambda}$, we have
\[ \|v^{+}\| \leq \liminf_{n \to \infty} \|v_n^{+}\| \leq R \quad \textup{ and } \quad I_{\lambda}(v) \leq \liminf_{n \to \infty} I_{\lambda}(v_n) = m \leq I_{\lambda}(0)\,.\]
Finally, since, by Lemma \ref{mountainPassGeometry1-ch3}, we know that $I_{\lambda}(v) \geq  I_{\lambda}(0)+  \frac{1}{2}$ if $v \in \partial D$, we deduce that $v \in D$ is a local minimum of $I_{\lambda}$.
\end{proof}

\begin{proof}[\textbf{Proof of  Theorem \ref{th1-ch3}}]
By Corollary \ref{characterization P0-ch3}, we know that $m_{c_-}>0$.
Assume that 
$\lambda > 0$ is small enough in order to ensure that the conclusion of Lemma \ref{mountainPassGeometry1-ch3} holds. 
By Proposition \ref{localMinimumVariational-ch3} we have  a first critical 
point $v_1$, which is a local minimum of $I_{\lambda}$ and satisfies $I_{\lambda}(v_1) \leq I_{\lambda}(0)$. On the other hand, since the Palais-Smale condition holds, in view of Lemmas \ref{mountainPassGeometry1-ch3}.
 and  \ref{mountainPassGeometry2-ch3}, we can apply Theorem \ref{mpTheorem-ch3} and obtain a second critical point of 
$I_{\lambda}$ at the mountain-pass level $c\geq  I_{\lambda}(0)+\frac12$. This gives two different solutions to  $(Q_{\lambda,\,\alpha_{\lambda}})$. 
Finally, applying Lemma \ref{equivProblems-ch3}, we obtain two different solutions to our original problem \eqref{Plambda-ch3}.
\end{proof}

\section{Proof of Theorem \ref{th2-ch3} and first part of Theorem \ref{th-h-0-ch3}}
\label{section 7}

This section is devoted to the proofs of Theorem \ref{th2-ch3}, the first part of Theorem \ref{th-h-0-ch3} and the first part of Corollary \ref{cor-ch3}. Let us recall that, under the assumption \eqref{A2-ch3}, every solution to \eqref{Plambda-ch3} belongs to $\mathcal{C}_0^1(\overline{\Omega})$ (see \cite[Theorem 2.2]{DC_J_2017}). We first prove Theorem \ref{th2-ch3} and the first part of Corollary \ref{cor-ch3}.

\begin{lemma} \rm \cite[Lemma 2.2]{A_DC_J_T_2014} \label{compPrincipleP0-ch3} \it
Assume \eqref{A1-ch3}. If $u_1,\, u_2 \in H^1(\Omega) \cap W_{loc}^{1,N}(\Omega) \cap \mathcal{C}(\overline{\Omega})$ are respectively a lower and an upper solution to \eqref{P0-ch3},
 then $u_1 \leq u_2\,.$
\end{lemma}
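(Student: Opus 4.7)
The plan is to work directly with $w := u_{1}-u_{2}$ and to neutralise the quadratic gradient term by means of an exponential weight, in the spirit of a Cole--Hopf transformation carried out at the level of the test function. Subtracting the inequalities that define the lower solution $u_{1}$ and the upper solution $u_{2}$ of \eqref{P0-ch3} (Definition \ref{lowerUpperSolution-ch3} applied with $H(x,u,p)=c_{-}(x)u-\mu|p|^{2}$ and $\xi=h$), I get, for every admissible $\varphi\in H_{0}^{1}(\Omega)\cap L^{\infty}(\Omega)$ with $\varphi\geq 0$,
\[
\int_{\Omega}\nabla w\cdot\nabla\varphi\,dx+\int_{\Omega}c_{-}(x)\,w\,\varphi\,dx-\mu\int_{\Omega}\nabla(u_{1}+u_{2})\cdot\nabla w\,\varphi\,dx\leq 0.
\]

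The key step is to plug in the weighted test function
\[
\varphi := w^{+}\,e^{\mu(u_{1}+u_{2})}.
\]
This $\varphi$ is admissible: since $u_{1},u_{2}\in L^{\infty}(\Omega)$, the factor $e^{\mu(u_{1}+u_{2})}$ is in $L^{\infty}(\Omega)$, is bounded below by a positive constant, and lies in $H^{1}(\Omega)$ (its gradient $\mu e^{\mu(u_{1}+u_{2})}\nabla(u_{1}+u_{2})$ is the product of an $L^{\infty}$ function and an $L^{2}$ function). The lower/upper solution conditions $u_{1}^{+},u_{2}^{-}\in H_{0}^{1}(\Omega)$ together with $u_{1},u_{2}\in\mathcal{C}(\overline{\Omega})$ yield $u_{1}\leq 0\leq u_{2}$ on $\partial\Omega$, so $w^{+}\in H_{0}^{1}(\Omega)\cap L^{\infty}(\Omega)$ and hence $\varphi\in H_{0}^{1}(\Omega)\cap L^{\infty}(\Omega)$ with $\varphi\geq 0$. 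The crucial point is that the rate $\mu$ in the exponent matches the coefficient of $|\nabla u|^{2}$ in \eqref{P0-ch3}: differentiating $\varphi$ and using $\nabla w\cdot\nabla w^{+}=|\nabla w^{+}|^{2}$ and $w\,w^{+}=(w^{+})^{2}$ a.e., the two ``gradient-cross'' terms cancel exactly, leaving
\[
\int_{\Omega}\nabla w\cdot\nabla\varphi\,dx-\mu\int_{\Omega}\nabla(u_{1}+u_{2})\cdot\nabla w\,\varphi\,dx=\int_{\Omega}e^{\mu(u_{1}+u_{2})}|\nabla w^{+}|^{2}\,dx.
\]

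Substituted back, the combined inequality reduces to
\[
\int_{\Omega}e^{\mu(u_{1}+u_{2})}|\nabla w^{+}|^{2}\,dx+\int_{\Omega}c_{-}(x)\,e^{\mu(u_{1}+u_{2})}(w^{+})^{2}\,dx\leq 0.
\]
Both integrands are non-negative ($c_{-}\geq 0$ and $e^{\mu(u_{1}+u_{2})}>0$), so $\nabla w^{+}\equiv 0$ in $\Omega$; combined with $w^{+}\in H_{0}^{1}(\Omega)$ this forces $w^{+}\equiv 0$, i.e.\ $u_{1}\leq u_{2}$, as claimed.

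The main technical obstacle, as I see it, is not the scheme itself but rigorously justifying the manipulations with $\varphi$, especially the chain rule for $e^{\mu(u_{1}+u_{2})}$ and the legitimacy of $\varphi$ as a test function in the lower/upper solution formulation. The extra regularity $H^{1}\cap W^{1,N}_{\loc}\cap\mathcal{C}(\overline{\Omega})$ present in the hypothesis is precisely tailored to this and, together with the continuity up to $\partial\Omega$, makes the pointwise boundary comparison $u_{1}\leq 0\leq u_{2}$ immediate.
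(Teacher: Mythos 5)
Your argument is correct, and it is genuinely different from what the paper does: the paper proves nothing here, it simply cites \cite[Lemma~2.2]{A_DC_J_T_2014}, which is a comparison principle for possibly non-constant $\mu\in L^\infty(\Omega)$. You exploit the fact that under \eqref{A1-ch3} the coefficient $\mu$ is a positive \emph{constant}, so the exponential weight $e^{\mu(u_1+u_2)}$ has the single rate $\mu$ needed to make the two gradient--cross terms cancel exactly; your computations (chain and product rules for $H^1\cap L^\infty$ functions, $\nabla w\cdot\nabla w^{+}=|\nabla w^{+}|^2$, $w\,w^{+}=(w^{+})^2$, and the admissibility of $\varphi=w^{+}e^{\mu(u_1+u_2)}$ as a test function in Definition~\ref{lowerUpperSolution-ch3}) are all legitimate, and $c_-\geq 0$ under \eqref{A1-ch3} closes the argument. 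What the citation buys over your proof is generality: \cite[Lemma~2.2]{A_DC_J_T_2014} covers variable, possibly sign-changing $\mu$, where a single exponential weight cannot cancel the gradient term and a more delicate construction is needed; this is also why the lemma is stated under the extra regularity $W^{1,N}_{\loc}(\Omega)$, which your argument never invokes. What your proof buys is that, in the constant-$\mu$ setting actually used in the paper, it is self-contained, elementary, and makes transparent that the key mechanism is the Cole--Hopf cancellation implemented at the level of the test function. One small remark: you justify $w^{+}\in H_0^1(\Omega)$ via continuity up to the boundary, which is fine; alternatively, $0\leq w^{+}\leq u_1^{+}+u_2^{-}$ with $u_1^{+},u_2^{-}\in H_0^1(\Omega)$ (given by Definition~\ref{lowerUpperSolution-ch3}) already yields $w^{+}\in H_0^1(\Omega)$ without invoking $\mathcal{C}(\overline{\Omega})$.
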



\begin{lemma} \label{lemmaStrictLowerAndUpperPositive-ch3}
Assume \eqref{A2-ch3} and suppose that \eqref{P0-ch3} has a solution $u_0$ with $c_{+}u_0 \gneqq 0$. Then, it follows that:
\begin{itemize}
\item[i)] Every  upper solution $\beta \in \mathcal{C}_0^1(\overline{\Omega})$  to \eqref{Plambda-ch3} with $\lambda > 0$ and $c_{+} \beta \geq 0$ satisfies $\beta \gg u_0$.
\item[ii)] For all $\lambda>0$, $u_0$ is a strict lower solution to \eqref{Plambda-ch3}.
\item[iii)] There exists $\lambda_0 > 0$ such that, for all $\lambda \in (0,\lambda_0)$, \eqref{Plambda-ch3} has a solution $u$ with $c_{+} u \geq 0$.
\item[iv)] The problem \eqref{Plambda-ch3} has no solution $u$ with $c_{+}u \geq 0$ for $\lambda > 0$ large.
\end{itemize}
\end{lemma}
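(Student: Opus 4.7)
For (i), first drop the non-negative term $\lambda c_+\beta$ in the upper-solution inequality for $\beta$ to see that $\beta$ is also an upper solution to \eqref{P0-ch3}; Lemma \ref{compPrincipleP0-ch3}, with $u_0$ as a lower solution, then yields $u_0 \leq \beta$. To upgrade this to $\beta \gg u_0$, set $w := \beta - u_0 \geq 0$ and use the identity $|\nabla \beta|^2 - |\nabla u_0|^2 = 2\langle \nabla u_0, \nabla w\rangle + |\nabla w|^2$: subtracting the two (in)equalities gives, in the weak sense,
\[
L_{u_0}(w) \;\geq\; \mu |\nabla w|^2 + \lambda c_+ \beta \;\geq\; 0,
\]
with $c_+\beta \geq c_+ u_0 \gneqq 0$ (since $\beta \geq u_0$ and $c_+ \geq 0$). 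Theorem \ref{SMP-ch3} applied with $B = -2\mu\nabla u_0 \in L^\infty$ and $a = c_- \geq 0$ then forces $w\equiv 0$ or $w \gg 0$; the first alternative contradicts $L_{u_0}(w) \geq \lambda c_+ \beta \gneqq 0$, so $\beta \gg u_0$. Part (ii) follows immediately: $\lambda c_+ u_0 \geq 0$ shows $u_0$ is a lower solution to \eqref{Plambda-ch3}, and any solution $u \geq u_0$ fulfills $c_+ u \geq c_+ u_0 \geq 0$, so (i) yields $u \gg u_0$.

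\textbf{Part (iii).} The plan is to construct an upper solution of the form $\bar u_\lambda := u_0 + \lambda\psi$, with $\psi \gg 0$ chosen \emph{independently} of $\lambda$, and then apply Theorem \ref{BMP1988-ch3} to the ordered pair $(u_0, \bar u_\lambda)$. Substituting $\bar u_\lambda$ into \eqref{Plambda-ch3}, cancelling the equation for $u_0$ and dividing by $\lambda$ reduces the upper-solution condition to the weak inequality
\[
L_{u_0}(\psi) \;\geq\; c_+ u_0 + \lambda \bigl(c_+\psi + \mu |\nabla\psi|^2\bigr).
\]
Fix $K > \|u_0\|_\infty$ and let $\psi \in \mathcal{C}^1(\overline{\Omega})$ be the strongly positive solution (given by Proposition \ref{prop-maximum-anti-maximum-ch3} at $\gamma = 0 < \gamma_1$ together with standard $W^{2,p}$-regularity) to $L_{u_0}\psi = K(c_+ + 1)$. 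Then $K(c_+ + 1) - c_+ u_0 = c_+(K - u_0) + K$, and choosing $\lambda_0 > 0$ so small that $\lambda_0 \|\psi\|_\infty \leq K - \|u_0\|_\infty$ and $\lambda_0 \mu \|\nabla\psi\|_\infty^2 \leq K$ makes the displayed inequality hold pointwise a.e. for every $\lambda \in\,]0,\lambda_0[$. The growth condition required by Theorem \ref{BMP1988-ch3} is trivial for $H(x,s,\xi) = -c_\lambda(x)s - \mu |\xi|^2$, and the resulting solution $u$ automatically satisfies $c_+ u \geq c_+ u_0 \geq 0$.

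\textbf{Part (iv), the main obstacle.} The hardest step is the non-existence bound. Given a solution $u$ to \eqref{Plambda-ch3} with $c_+ u \geq 0$, part (i) yields $u \gg u_0$, so $w := u - u_0 \geq 0$, and a direct subtraction of the equations produces the exact identity
\[
L_{u_0}(w) \;=\; \mu |\nabla w|^2 + \lambda c_+ u.
\]
The key algebraic observation is the self-adjoint rewriting
\[
L_{u_0}\varphi \;=\; -e^{-2\mu u_0}\operatorname{div}\bigl(e^{2\mu u_0}\nabla\varphi\bigr) + c_-\varphi,
\]
which makes $L_{u_0}$ symmetric in the weighted space $L^2(\Omega; e^{2\mu u_0}\,dx)$. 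Testing the identity against $\varphi_1\, e^{2\mu u_0}$, with $\varphi_1 \gg 0$ the principal eigenfunction from Proposition \ref{prop-maximum-anti-maximum-ch3}, and writing $u = u_0 + w$ on the right, yields
\[
(\gamma_1 - \lambda) \int_\Omega c_+ w\, \varphi_1\, e^{2\mu u_0}\, dx \;\geq\; \lambda \int_\Omega c_+ u_0\, \varphi_1\, e^{2\mu u_0}\, dx \;>\; 0,
\]
a contradiction as soon as $\lambda \geq \gamma_1$; in particular no such $u$ exists for $\lambda$ large. The main difficulty is precisely this self-adjoint rewriting: without it, the first-order term $-2\mu\nabla u_0\cdot\nabla$ obstructs a direct integration by parts of $L_{u_0}(w)$ against the principal eigenfunction.
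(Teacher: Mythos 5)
Your arguments for parts (i) and (ii) coincide with the paper's: apply Lemma \ref{compPrincipleP0-ch3} to get $\beta \geq u_0$, subtract the equations and invoke Theorem \ref{SMP-ch3} (the paper keeps the first-order coefficient $-\mu(\nabla\beta+\nabla u_0)$, you expand to $-2\mu\nabla u_0$ and keep $\mu|\nabla w|^2\ge 0$; these are equivalent). Parts (iii) and (iv), however, are done by genuinely different means, and both of yours work.

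For (iii), the paper stays in the variational framework: it passes to the truncated functional $J_\lambda = I_{\lambda,v_0}$, observes that $v_0$ is the global minimum of $J_0$, and reruns the arguments of Lemma \ref{mountainPassGeometry1-ch3} and Proposition \ref{localMinimumVariational-ch3} to produce a local minimizer for small $\lambda$. You instead construct an explicit upper solution $\bar u_\lambda=u_0+\lambda\psi$ with $\psi\gg 0$ solving $L_{u_0}\psi=K(c_++1)$, choose $\lambda_0$ to make the pointwise inequality $L_{u_0}(\psi)\ge c_+u_0+\lambda(c_+\psi+\mu|\nabla\psi|^2)$ hold, and feed the ordered pair $(u_0,\bar u_\lambda)$ into Theorem \ref{BMP1988-ch3}. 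Your route is more elementary and avoids the Cole--Hopf transform entirely for this step; the paper's route produces a local minimum of $J_\lambda$, which it reuses directly in the proof of Theorem \ref{th2-ch3}, but for the statement of part (iii) alone your construction is sufficient and cleaner.

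For (iv), the paper multiplies $(P_\lambda)$ by the principal eigenfunction $\varphi_1$ of the \emph{symmetric} problem $-\Delta v + c_- v=\gamma c_+ v$ (note this is a different $\gamma_1$ from the one in \eqref{linearized-eigenvalue-problem-ch3}), drops $\mu\int|\nabla u|^2\varphi_1\ge 0$, uses $u\ge u_0$, and gets a contradiction when $\lambda$ dominates the term $\int h\varphi_1$. You instead subtract the equations to obtain the identity $L_{u_0}(w)=\mu|\nabla w|^2+\lambda c_+u$ for $w=u-u_0$, exploit the self-adjoint rewriting $L_{u_0}\varphi=-e^{-2\mu u_0}\operatorname{div}(e^{2\mu u_0}\nabla\varphi)+c_-\varphi$, and test against $\varphi_1 e^{2\mu u_0}$ with $\varphi_1$ the principal eigenfunction of $L_{u_0}$ from Proposition \ref{prop-maximum-anti-maximum-ch3}. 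After discarding $\mu|\nabla w|^2\ge 0$, the lower-order term $\int h\varphi_1$ never appears (it cancels in the subtraction), and you obtain a contradiction for every $\lambda\ge\gamma_1$. This is a cleaner and sharper non-existence bound, as it quantifies the threshold in terms of the linearization at $u_0$ rather than leaving ``$\lambda$ large enough'' unquantified; the cost is the self-adjoint conjugation trick, which the paper sidesteps by choosing a first-order-free eigenvalue problem.

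Both proposals are correct; no gaps found.
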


\begin{proof} 
i) Let $\beta \in \mathcal{C}_0^1(\overline{\Omega})$  be an upper solution to \eqref{Plambda-ch3} with $\lambda> 0$ and $c_{+}\beta \geq 0$. 
It is clear that $\beta$ is an upper solution to \eqref{P0-ch3}. Hence, by Lemma \ref{compPrincipleP0-ch3}, we have  $\beta \geq u_0$. 
Now, let us prove that 
$\beta \gg u_0$. To that end, we define $w = \beta-u_0$ and observe that
\[ \left\{ \begin{aligned}
- \Delta w - \mu \langle \nabla \beta + \nabla u_0, \nabla w \rangle + c_{-}(x) w & \geq \lambda c_{+}(x) u_0\,, \quad & \textup{ in } \Omega\,, \\
w & = 0\,, & \textup{ on } \partial \Omega\,.
\end{aligned}
\right.
\]
Applying then Theorem \ref{SMP-ch3}, we deduce that $w \gg 0$ and so that $\beta \gg u_0$.
\medbreak

ii) As $c_{+}u_0 \gneqq 0$, we easily observe that $u_0$ is a lower solution to \eqref{Plambda-ch3}. Moreover, 
if $u$ is a solution to  \eqref{Plambda-ch3} with $u \geq u_0$ then $c_+u\geq c_+u_0\gneqq 0$. By i), we have that $u\gg u_0$. This proves that  $u_0$ is a strict lower solution to \eqref{Plambda-ch3}.
\medbreak

iii) Define  $v_0:= \frac{1}{\mu} \big( e^{\mu u_0} - 1 \big) \in \mathcal{C}_0^1(\overline{\Omega})$ and introduce the more compact notation $J_{\lambda} := I_{\lambda,\,v_0}$. Observe that $v_0$ 
is the minimum of $J_{0}$. The proof then follows the lines of
Lemma \ref{mountainPassGeometry1-ch3} and Proposition \ref{localMinimumVariational-ch3}.
\medbreak

%
%
%
iv) We argue by contradiction. Suppose that $u$ is a solution to \eqref{Plambda-ch3} with $c_{+}u \geq 0$ and let $\gamma_1 > 0$ be the first eigenvalue and $\varphi_1 > 0$ be the first eigenfunction to the eigenvalue problem
\begin{equation*}
-\Delta v + c_{-}(x) v = \gamma c_{+}(x) v\,, \quad v \in \H\,.
\end{equation*}
Multiplying \eqref{Plambda-ch3} by $\varphi_1$ and integrating it follows that
\[ \begin{aligned}
\gamma_1 \int_{\Omega}c_{+}(x) u \varphi_1 dx & =  \int_{\Omega} \big( \gradu \nabla \varphi_1   + c_{-}(x) u  \varphi_1 \big)   dx \\
& = \lambda \int_{\Omega} c_{+}(x) u \varphi_1 dx + \mu \int_{\Omega} |\gradu|^2 \varphi_1 dx + \int_{\Omega} h(x) \varphi_1 dx\,,
\end{aligned}\]
or equivalently
\[ 0 = (\lambda - \gamma_1) \int_{\Omega} c_{+}(x) u \varphi_1 dx + \mu \int_{\Omega} |\gradu|^2 \varphi_1 dx + \int_{\Omega} h(x) \varphi_1 dx\,. \]
Hence, as by i) we know that  $u \geq u_0$, if $\lambda > \gamma_1$ it follows that
\begin{equation} \label{contStep2Lemma42-ch3}
0 \geq (\lambda - \gamma_1) \int_{\Omega} c_{+}(x) u_0 \varphi_1 dx +  \int_{\Omega} h(x) \varphi_1 dx\,.
\end{equation}
Since $c_{+} u_0 \gneqq 0$ and $\varphi_1 > 0\,,$ \eqref{contStep2Lemma42-ch3} gives a contradiction for $\lambda$ large enough. 
\end{proof}

\begin{proof}[\textbf{Proof of Theorem \ref{th2-ch3}}]
Having at hand Lemma \ref{lemmaStrictLowerAndUpperPositive-ch3}, we define $v_0:= \frac{1}{\mu} \big( e^{\mu u_0} - 1 \big) \in \mathcal{C}_0^1(\overline{\Omega})$ and use the more compact notation $J_{\lambda}:= I_{\lambda, v_0}$. 
Then, let us define 
\[  \overline{\lambda} := \sup \left\{ \lambda: \eqref{Plambda-ch3} \textup{ has a solution } u \textup{ with } c_{+}u \geq 0 \right\},\]
and observe that, by Lemma  \ref{lemmaStrictLowerAndUpperPositive-ch3}, we have $ 0 < \overline{\lambda}<\infty$ and, by the definition of  $ \overline{\lambda}$, for all $\lambda > \overline{\lambda}$, \eqref{Plambda-ch3} has no solution $u$ with $c_{+} u \geq 0$. Also by Lemma \ref{lemmaStrictLowerAndUpperPositive-ch3}, we know that every solution to \eqref{Plambda-ch3} with $\lambda > 0$ and $c_{+}u \geq 0$ satisfies $u \gg u_0$.
\medbreak

Now, we split the proof into several steps:
\medbreak


\noindent \textbf{Step 1:} \textit{For all $0 < \lambda < \overline{\lambda}$, \eqref{Plambda-ch3} has a strict upper solution $\beta_{\lambda} \gg u_0$.}
\medbreak
By the definition of $\overline{\lambda}$, 
we can find $\widetilde{\lambda} \in\, ]\lambda, \overline{\lambda}[$ and $u_{\widetilde{\lambda}}$ 
solution to $(P_{\widetilde{\lambda}})$ with $c_{+} u_{\widetilde{\lambda}} \geq 0$. Then, observe that 
$u_{\widetilde{\lambda}} \in \mathcal{C}_0^1(\overline{\Omega})$ is an upper solution to \eqref{P0-ch3} and so, 
by Lemma  \ref{lemmaStrictLowerAndUpperPositive-ch3}, $u_{\widetilde{\lambda}} \gg u_0$. Finally, in order to prove that $u_{\widetilde{\lambda}}$ is a  strict  upper solution to \eqref{Plambda-ch3}, let us consider a solution $u$  to 
\eqref{Plambda-ch3} with $u \leq u_{\widetilde{\lambda}}$ and introduce $w = u_{\widetilde{\lambda}} - u$. 
Observe that
\[ \left\{
\begin{aligned}
-\Delta w - \mu \langle \nabla u + \nabla u_{\widetilde{\lambda}}, \nabla w \rangle + c_{-}(x) w & \geq (\widetilde{\lambda} - \lambda) c_{+}(x) u_0\,, \quad & \textup{ in } \Omega\,, \\
w & = 0\,,& \textup{ on } \partial \Omega\,.
\end{aligned}
\right.
\]
Hence, applying Theorem \ref{SMP-ch3}, we obtain that $w \gg 0$, and so that $\beta_{\lambda}= u_{\widetilde{\lambda}}$ is the required  strict upper solution.

\medbreak
\noindent \textbf{Step 2:} \textit{For every $0 < \lambda < \overline{\lambda}$, there exists 
$v \in \mathcal{C}_0^1 (\overline{\Omega})$ with $v \gg v_0$ which is a local minimum of $J_{\lambda}$ 
in the $H_0^1$-topology and a solution to $(Q_{\lambda,\, v_0})$.}
\medbreak

First of all, by Lemma \ref{lemmaStrictLowerAndUpperPositive-ch3}, we easily observe that $v_0 \in \mathcal{C}^1(\overline{\Omega})$ is a strict lower solution to 
$(Q_{\lambda, v_0})$. On the other hand, by Step 1, 
for all $0 < \lambda < \overline{\lambda}$, there exists $\beta_{\lambda} \in \mathcal{C}_0^1(\overline{\Omega})$  
strict  upper solution to \eqref{Plambda-ch3} with $\beta_{\lambda} \gg u_0$. We introduce then
\[  w_{\lambda} := \frac{1}{\mu} \Big( e^{\mu \beta_{\lambda}} - 1 \Big)\,.\]
and, by  Lemma \ref{equivProblems-ch3} and Remark \ref{equivProblemsLowerUpper-ch3}, it follows that 
$v_0$, $w_{\lambda}\in  \mathcal{C}^1(\overline{\Omega})$ are a couple of well ordered strict lower and upper 
solutions to $(Q_{\lambda, v_0})$. Hence, applying Corollary \ref{localMinimizer-ch3} and Proposition 
\ref{c01vsWop-ch3} we have the existence of $v \in  \mathcal{C}_0^{1}(\overline{\Omega})$ minimizer 
of $J_{\lambda}$ on 
$M:= \big\{ v \in \H: v_0 \leq v \leq w_{\lambda}\big\}$, local minimum of $J_{\lambda}$ in the $H^1_0$-topology 
and solution to  $(Q_{\lambda,v_0})$. 

\medbreak

\noindent \textbf{Step 3:} \textit{For every $\lambda \in\,  ]0,\overline{\lambda}[$, \eqref{Plambda-ch3} has 
at least two solutions $u_{\lambda,1}$, $u_{\lambda,2} \in \mathcal{C}_0^1(\overline{\Omega})$ such that 
$u_0 \ll u_{\lambda,1} \ll u_{\lambda,2}$.}
\medbreak
By Step 2, we have the existence of  
a first critical point $v_{\lambda,1} \in \mathcal{C}_0^{1}(\Omega)$, which is a local minimum of 
$J_{\lambda}$ and satisfies $v_{\lambda,1} \gg v_0$. Since the Palais-Smale condition at any level 
$d \in \R$ holds, by Theorem \ref{characterizacionMinimizer-ch3}, we have two options. 
If we are in the first case, then together with Lemma \ref{mountainPassGeometry2-ch3}, 
we see that $J_{\lambda}$ has the mountain-pass geometry and by Theorem \ref{mpTheorem-ch3}, 
we have the existence of a second solution to $(Q_{\lambda,\,v_0})$. In the second case, 
we have directly the existence of a second solution to  $(Q_{\lambda,\,v_0})$. 
Then, by Lemma \ref{equivProblems-ch3}, we conclude the existence of at least two solutions  
$u_{\lambda,1}$,  $u_{\lambda,2}$ to \eqref{Plambda-ch3}
with  $u_{\lambda,i} \geq u_0$ for $i=1,2$. 
\medbreak

Moreover, without loss of generality we can choose the first solution $u_{\lambda,1}$ as the minimal solution with $u_{\lambda,1} \geq u_0$. Hence, we have $u_{\lambda,1} \lneqq u_{\lambda,2}$ as otherwise there exists a solution with $u_0 \leq u \leq \min\{u_{\lambda,1},u_{\lambda,2}\}$ which contradicts the minimality of $u_{\lambda,1}$. As  $u_0$ is strict, we obtain that $u_{\lambda,1} \gg u_0$.
\medbreak

Now observe that, by convexity of $y \mapsto |y|^2$, the function $\beta = \frac{1}{2} (u_{\lambda,1} + u_{\lambda,2})$ is an upper solution to \eqref{Plambda-ch3} which is not a solution. In fact, arguing as in \cite[Proof of Theorem 1.3]{DC_J_2017} and Step 1, we can see that $\beta$ is a strict upper solution to \eqref{Plambda-ch3}. As $u_{\lambda,1} \lneqq \beta \lneqq u_{\lambda,2}$ and $\beta$ is strict, we deduce that $u_{\lambda,1} \ll u_{\lambda,2}$.

\medbreak


\noindent \textbf{Step 4:} \textit{$\lambda_1 < \lambda_2$ implies $u_{\lambda_1,1} \ll u_{\lambda_2,1}$.}
\medbreak
Directly observe that $u_{\lambda_2,1}$ is an upper solution to $(P_{\lambda_1})$ which is not a solution. Hence, as $u_{\lambda_1,1}$ is the minimal solution to $(P_{\lambda_1})$ with $u_{\lambda_1,1} \geq u_0$, we deduce that $u_{\lambda_1,1} \leq u_{\lambda_2,1}$. 
Arguing as in 
Step 1,  we conclude that  
$u_{\lambda_1,1} \ll u_{\lambda_2,1}$, as desired.
\medbreak


\noindent \textbf{Step 5:} \textit{Existence of solution to $(P_{\overline{\lambda}})$.}
\medbreak

Let $\{\lambda_n\}$ be a sequence with $0 <\lambda_n < \overline{\lambda}$ and $\lambda_n \to \overline{\lambda}$ and let $\{v_n\}$ be the corresponding sequence of minimum of $J_{\lambda_n}$ obtained in Step 2. This implies that $\langle J_{\lambda_n}'(v_n),\varphi \rangle = 0$ for all $\varphi \in \H$. Moreover, as $v_n$ is the minimum of $J_{\lambda_n}$ on $\{v\in H^1_0(\Omega) : v_0\leq v \leq w_{\lambda_n}\}$,
%
we obtain that
\[ J_{\lambda_n}(v_n) \leq J_{\lambda_n} (v_0) \leq J_{\overline{\lambda}} (v_0) + D({\overline{\lambda}}-\lambda_n) \leq A, \]
for some $D>0$ and $A > 0$.
Then, by Lemma \ref{boundPS-ch3}
and Proposition \ref{strongPS-ch3}, we prove the existence of $v_{\overline{\lambda}} \in \H$ such that 
$v_n \rightarrow v_{\overline{\lambda}}$ in $\H$ with $v_{\overline{\lambda}}$ a solution to  
$(Q_{ \overline{\lambda},\,v_0})$.
Moreover, as $v_n \geq v_0$ for all $n \in \N$, we deduce that $v_{\overline{\lambda}}\geq v_0$.
\medbreak

Applying Lemma \ref{equivProblems-ch3} we conclude that $u_{\overline{\lambda}} = \frac{1}{\mu} \ln(1+\mu v_{\overline{\lambda}})$ is a solution to  $(P_{\overline{\lambda}})$. Moreover, by construction and as $u_0$ is a strict lower solution, we have  $u_{\overline{\lambda}} \gg u_0$;
 
\medbreak

%
%
%


\noindent \textbf{Step 6:} \textit{Uniqueness of solution to $(P_{\overline{\lambda}})$.}
\medbreak

Assume by contradiction the existence of two solutions $u_1$, $u_2$ to $(P_{\overline{\lambda}})$ with $c_{+} u_1 \geq 0$ and $c_{+}u_2 \geq 0$. As in Step 3, we can assume that 
 $u_0 \ll u_1 \ll u_2$. By strict convexity of the nonlinearity and Theorem \ref{SMP-ch3}, it is easy to prove that, for all 
 $\lambda \in \,]0,1[$, $\beta_{\lambda}=\lambda u_1+(1-\lambda) u_2$ is a strict upper solution to $(P_{\overline{\lambda}})$. This implies in particular that there is no solution  $u_3$  to $(P_{\overline{\lambda}})$ with 
 $$
 u_1\lneqq u_3\ll u_2,
 $$
 as otherwise, define $\tilde\lambda=\sup\{\lambda \in [0,1]\mid  \lambda u_1+(1-\lambda) u_2-u_3\gg0\}$ and observe that $u_3$ is a solution to $(P_{\overline{\lambda}})$ with $u_3\leq \beta_{\tilde \lambda}$ but not $u_3\ll \beta_{\tilde \lambda}$, which contradicts the fact that $\beta_{\tilde \lambda}$ is a strict upper solution to $(P_{\overline{\lambda}})$.
\medbreak 
Now, let us define $v_i= \frac{1}{\mu} \left( e^{\mu u_i} -1 \right)$ for $i=1$, $2$, the corresponding solution to
$(Q_{\overline{\lambda},\,v_0})$. By Lemma \ref{equivProblems-ch3}, we deduce from the above argument that the problem $(Q_{\overline{\lambda},\,v_0})$ has no solution $v_3$ with
\begin{equation}
 \label{eq *}
 v_1\lneqq v_3\ll v_2.
 \end{equation}
\medbreak

As $v_0$ and $\beta_{1/2}$ are strict lower and upper solutions to  $(Q_{\overline{\lambda},\,v_0})$ and $v_1$ is the unique solution of $(Q_{\overline{\lambda},\,v_0})$  with $v_0\lneqq v_1\lneqq \beta_{1/2}$,  we deduce from Corollary \ref{localMinimizer-ch3} and Proposition \ref{c01vsWop-ch3} that $v_1$ is a local minimum of $J_{\overline{\lambda}}$. Let us prove the existence of $r>0$ such that 
   \begin{equation}
 \label{eq **}
\inf_{v\in \partial B(v_1,r)} J_{\overline{\lambda}}(v) > J_{\overline{\lambda}}(v_1).
 \end{equation}
Otherwise, by Theorem \ref{characterizacionMinimizer-ch3}, for all $n\in \mathbb N$, there exists $w_n$ with $\|w_n-v_1\|=\frac{1}{n}$, $J_{\overline{\lambda}}(w_n)=J_{\overline{\lambda}}(v_1)$ and $w_n$ is a solution to $(Q_{\overline{\lambda},\,v_0})$. Moreover,  as $v_1$ is the minimum solution to $(Q_{\overline{\lambda},\,v_0})$, we have also $w_n\gneqq v_1$. By a bootstrap argument, we prove that $\|w_n-v_1\|_{C^1}\to 0$ and hence, for $n$ large enough, 
$$
v_1\lneqq w_n\ll v_2
$$
which contradicts \eqref{eq *}.
\medbreak

By \eqref{eq **}, denoting $\delta= \inf_{v\in \partial B(v_1,r)} J_{\overline{\lambda}}(v) - J_{\overline{\lambda}}(v_1)$, we have $D>0$ such that, for $\lambda > \overline{\lambda}$,
\[
 \inf_{v\in \partial B(v_1,r)} J_{\lambda}(v) 
 \geq  \inf_{v\in \partial B(v_1,r)} J_{\overline{\lambda}}(v) 
- D (\lambda-\overline{\lambda})= 
J_{\overline{\lambda}}(v_1) + \delta - D (\lambda-\overline{\lambda})
\geq
J_{\lambda}(v_1) + \delta - 2D (\lambda-\overline{\lambda}).
\]   
This implies  the existence of $\epsilon>0$ such that, for all $\lambda\in [\overline{\lambda}, \overline{\lambda}+\epsilon]$, 
$$
\inf_{v\in \partial B(v_1,r)} J_{\lambda}(v) > J_{\lambda}(v_1).
$$
Hence, arguing as in the proof of Proposition \ref{localMinimumVariational-ch3}, we deduce that, for some $\lambda > \overline{\lambda}$, the problem $(Q_{\lambda,\,v_0})$
has a solution which is a local minimum of $J_{\lambda}(v)$. This contradicts the definition of 
$\overline{\lambda}$.
\end{proof}

As a consequence of this result we prove the first part of Corollary \ref{cor-ch3}.

\begin{proof}[\textbf{Proof of the first part of Corollary \ref{cor-ch3}}]
Let $u_0$ be the unique solution to \eqref{P0-ch3}. Since \eqref{A2-ch3} holds, we know that $u_0 \in \mathcal{C}_0^1(\overline{\Omega})$. Now, observe that
\[ 
\left\{
\begin{aligned}
-\Delta u_0 - \mu \langle \nabla u_0, \nabla u_0 \rangle + c_{-}(x) u_0 & \geq h(x), \quad & \textup{ in } \Omega, \\
u_0 & = 0, \quad & \textup{ on } \partial \Omega.
\end{aligned}
\right.
\]
Hence, since $h \gneqq 0$, by Theorem \ref{SMP-ch3}, it follows that that $u_0 \gg 0\,,$ and so, in particular that $c_{+}u_0 \gneqq 0.$ The corollary then follows immediately from Theorem \ref{th2-ch3}. 
\end{proof}

We end this section  with the proof of the first part of Theorem \ref{th-h-0-ch3}.

\begin{lemma}  \label{strict-upper-cu0 0 positive-ch3}
Assume \eqref{A2-ch3}, suppose that \eqref{P0-ch3} has a solution $u_0$ with $c_{+} u_0 \equiv 0$ and let $\gamma_1 > 0$ be the principal eigenvalue of \eqref{linearized-eigenvalue-problem-ch3}. Then, for all $0 < \lambda < \gamma_1$, there exists $\beta \in \mathcal{C}_0^1(\overline{\Omega})$ strict upper solution to \eqref{Plambda-ch3} satisfying $\beta \gg u_0$.
\end{lemma}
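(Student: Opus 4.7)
The plan is to construct $\beta$ as a small perturbation $\beta = u_0 + \epsilon\varphi_\ast$, where $\varphi_\ast \gg 0$ solves a well chosen linearized equation and $\epsilon > 0$ is a small parameter to be fixed. As a first step, I would invoke Proposition \ref{prop-maximum-anti-maximum-ch3} applied to the operator $L_{u_0}$ with weight $m = c_+$: the framework $B = -2\mu\nabla u_0 \in L^\infty(\Omega)^N$, $a = c_- \geq 0$ in $L^p(\Omega)$ is satisfied, and its principal eigenvalue is precisely the $\gamma_1$ of \eqref{linearized-eigenvalue-problem-ch3}. Since $\lambda \in (0,\gamma_1)$, the same proposition supplies a solution $\varphi_\ast \gg 0$ to
\[
L_{u_0}(\varphi_\ast) - \lambda c_+(x)\varphi_\ast = 1, \qquad \varphi_\ast \in H_0^1(\Omega),
\]
and elliptic regularity (right-hand side in $L^p$ with $p > N$) yields $\varphi_\ast \in \mathcal{C}_0^1(\overline{\Omega})$, hence $\|\nabla\varphi_\ast\|_\infty < +\infty$.

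Next I would compute $\mathcal{F}(\beta) := -\Delta\beta - c_\lambda(x)\beta - \mu|\nabla\beta|^2 - h(x)$ directly. The key algebraic point is the hypothesis $c_+u_0 \equiv 0$, which forces $c_\lambda u_0 = -c_-u_0$; combined with the equation $-\Delta u_0 + c_- u_0 - \mu|\nabla u_0|^2 - h = 0$ satisfied by $u_0$, all the terms involving only $u_0$ cancel, while the mixed gradient contribution $2\mu\epsilon\langle\nabla u_0,\nabla\varphi_\ast\rangle$ assembles with $-\epsilon\Delta\varphi_\ast$ and the zeroth order $-\epsilon c_\lambda\varphi_\ast$ to reproduce $L_{u_0}(\varphi_\ast) - \lambda c_+\varphi_\ast$. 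One obtains the exact identity
\[
\mathcal{F}(\beta) = \epsilon\bigl[L_{u_0}(\varphi_\ast) - \lambda c_+(x)\varphi_\ast\bigr] - \mu\epsilon^2|\nabla\varphi_\ast|^2 = \epsilon\bigl(1 - \mu\epsilon|\nabla\varphi_\ast|^2\bigr).
\]
Fixing $\epsilon > 0$ with $\mu\epsilon\|\nabla\varphi_\ast\|_\infty^2 < 1$, the right-hand side is bounded below by $s_0 := \epsilon(1 - \mu\epsilon\|\nabla\varphi_\ast\|_\infty^2) > 0$, so $\beta \in \mathcal{C}_0^1(\overline{\Omega})$ is an upper solution to \eqref{Plambda-ch3} with $\beta \gg u_0$.

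For strictness, given any solution $u \in \mathcal{C}_0^1(\overline{\Omega})$ of \eqref{Plambda-ch3} with $u \leq \beta$, I would introduce $w = \beta - u \geq 0$ and use the polarization $|\nabla\beta|^2 - |\nabla u|^2 = \langle\nabla\beta+\nabla u,\nabla w\rangle$ to derive
\[
-\Delta w - \mu\langle\nabla\beta+\nabla u,\nabla w\rangle + c_-(x)w \geq \lambda c_+(x)w + s_0 \geq s_0 > 0,
\]
the first inequality using $\lambda c_+w \geq 0$. Theorem \ref{SMP-ch3} applies to $w$ viewed as an upper solution to the associated homogeneous problem (coefficients $-\mu(\nabla\beta+\nabla u) \in L^\infty(\Omega)^N$ and $c_-\geq 0$ in $L^p(\Omega)$), yielding the alternative $w\equiv 0$ or $w\gg 0$. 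The first case substituted back into the strict inequality forces $0 \geq s_0 > 0$, a contradiction; hence $u \ll \beta$.

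The main delicacy I expect is the sharp telescoping producing the clean scalar remainder $1 - \mu\epsilon|\nabla\varphi_\ast|^2$: the normalization of the right-hand side in the equation for $\varphi_\ast$ has to be strong enough to dominate the quadratic defect $\mu\epsilon|\nabla\varphi_\ast|^2$, and it is precisely the hypothesis $c_+u_0 \equiv 0$ that eliminates the otherwise obstructive $-\lambda c_+u_0$ contribution of indefinite sign. This explains why the construction is tailored to the setting of Theorem \ref{th-h-0-ch3} and could not be carried out verbatim in the frameworks of Theorems \ref{th2-ch3} or \ref{th3-ch3}.
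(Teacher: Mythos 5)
Your argument is correct and follows essentially the same route as the paper's: invoke Proposition \ref{prop-maximum-anti-maximum-ch3} for $L_{u_0}$ with weight $c_+$ and source $1$ to produce $\varphi_\ast \gg 0$, set $\beta = u_0 + \epsilon\varphi_\ast$, use $c_+u_0\equiv 0$ to cancel the $u_0$-terms, choose $\epsilon$ small so the quadratic defect $\mu\epsilon^2|\nabla\varphi_\ast|^2$ is dominated, and conclude strictness via polarization and Theorem \ref{SMP-ch3}. Your explicit identity $\mathcal{F}(\beta) = \epsilon(1-\mu\epsilon|\nabla\varphi_\ast|^2)$ and the uniform lower bound $s_0$ make the contradiction in the strictness step somewhat cleaner than the paper's $\gneqq$ chain, but the proof is the same in substance.
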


\begin{proof}
First of all, by Proposition \ref{prop-maximum-anti-maximum-ch3} applied to $L_{u_0}$ (defined in \eqref{linearized-operator-ch3}) with $m=c_+$ and $h \equiv 1$, we know that, for $0<\lambda<\gamma_1$ there exists $w \gg 0$ solution to
\[ L_{u_0}(w) = \lambda c_{+}(x) w + 1, \quad w \in H_0^1(\Omega).\]
We consider then $\beta := u_0 + \epsilon w$ with $\epsilon > 0$ and we are going to prove that, for $\epsilon > 0$ small enough, $\beta$ is an upper solution to \eqref{Plambda-ch3}. Directly observe that, for $\epsilon > 0$ small enough,
\[ 
\begin{aligned}
-\Delta \beta & = c_{\lambda}(x) \beta + \mu |\nabla u_0|^2 + 2\mu \langle \nabla u_0, \nabla (\epsilon w) \rangle + \epsilon + h(x) 
\\
& \gneqq c_{\lambda}(x) \beta + \mu |\nabla u_0|^2 + 2\mu \langle \nabla u_0, \nabla (\epsilon w) \rangle + \mu \epsilon^2 |\nabla w |^2  + h(x) \\
& = c_{\lambda}(x) \beta + \mu |\nabla \beta|^2 + h(x), \quad \textup{ in } \Omega.
\end{aligned}
\]
Hence, since $\beta = 0$ on $\partial \Omega$ and $w \gg 0$, we have that $\beta$ is an upper solution to \eqref{Plambda-ch3} with $\beta\gg u_0$.
\medbreak

To prove that $\beta$ is strict, let $u$ be a solution to \eqref{Plambda-ch3} with $u\leq \beta$. Define $\varphi=\beta-u$ and observe that
\[ \left\{
\begin{aligned}
-\Delta \varphi - \mu \langle \nabla u + \nabla \beta, \nabla \varphi \rangle + c_{-}(x) \varphi &= \lambda c_+(x) \varphi\gneqq 0, \quad & \textup{ in } \Omega, \\
w & = 0\,,& \textup{ on } \partial \Omega\,.
\end{aligned}
\right.
\]
Applying Theorem \ref{SMP-ch3}, we obtain that $\varphi \gg 0$, and so that $\beta$ is strict.
\end{proof}
%
%

\begin{proof}[\textbf{Proof of the first part of Theorem \ref{th-h-0-ch3}}] Let us split the proof into two steps:
\medbreak
\noindent \textbf{Step 1:} \textit{For every $\lambda \in\,  ]0, \gamma_1[$, there exists $v \in \mathcal{C}_0^1(\overline{\Omega})$ with $v \geq v_0$ which is a local minimum of $J_{\lambda}$ in the $H_0^1$-topology and a solution to $(Q_{\lambda,\,v_0})$.}
\medbreak

The result follows from Lemma \ref{strict-upper-cu0 0 positive-ch3} arguing as in Step 2 of the proof of Theorem \ref{th2-ch3} choosing as strict lower solution $v_0-1$ as now $v_0$ is a solution to $(Q_{\lambda,\,v_0})$.
\medbreak
\noindent \textbf{Step 2:} \textit{Conclusion.}
\medbreak

As in Step 3 of the proof of Theorem \ref{th2-ch3}, we have the existence of two solutions 
to \eqref{Plambda-ch3}.
As $u_0$ is a solution to  \eqref{Plambda-ch3}, now the minimal solution above $u_0$ is $u_0$. Hence the two solutions satisfy  $u_0 \equiv u_{\lambda,1} \lneqq u_{\lambda,2}$. Then, arguing as in the proof of Theorem \ref{th2-ch3}, Step 3, we deduce that $u_0 \equiv u_{\lambda,1} \ll u_{\lambda,2}$ and the result follows.
\end{proof}

\section{Proof of Theorem \ref{th3-ch3} and second part of Theorem  \ref{th-h-0-ch3}} \label{V-ch3}

The aim of this Section is to prove Theorem \ref{th3-ch3} and the rest Theorem \ref{th-h-0-ch3} and of Corollary \ref{cor-ch3}. As in Section \ref{section 6}, we choose $\underline{u}_{\lambda} \in H^1(\Omega) \cap \Linfty$, 
the lower solution to \eqref{Plambda-ch3} constructed in Proposition \ref{propLowerSol-ch3},  as lower solution in \eqref{truncFunction-ch3} and 
we use the notation $I_{\lambda}:= I_{\lambda,\, \alpha_{\lambda}}$ for the functional $I_{\lambda,\, \alpha_{\lambda}}$ defined in \eqref{Ilambda-ch3}. We begin proving the uniqueness of solutions with $c_{+}u \leq 0$ under suitable assumptions.

\medbreak

\begin{prop} \label{uniquenessNonPositiveSolution-ch3}
Assume \eqref{A2-ch3} and suppose that \eqref{P0-ch3} has a solution $u_0$ with $c_{+}u_0 \lneqq 0$. Then,  for every $\lambda \in \R$, the problem \eqref{Plambda-ch3} has at most one solution $u$ with $c_{+}u \leq 0$.
\end{prop}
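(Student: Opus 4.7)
\textbf{Plan for the proof of Proposition \ref{uniquenessNonPositiveSolution-ch3}.}

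The plan splits according to the sign of $\lambda$. For $\lambda \leq 0$, one has $c_\lambda = \lambda c_+ - c_- \leq 0$ a.e.\ in $\Omega$, so \eqref{Plambda-ch3} falls in the limit coercive framework and uniqueness of the solution (with or without sign constraint) follows from \cite[Theorem 1.1]{A_DC_J_T_2014}, which was already invoked in Remark \ref{remark 2-ch3}. The whole work is therefore in the case $\lambda > 0$.

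Assume $\lambda > 0$ and let $u_1, u_2 \in \mathcal{C}^1_0(\overline\Omega)$ be two solutions to \eqref{Plambda-ch3} with $c_+ u_i \leq 0$. Since $\lambda c_+ u_i \leq 0$, each $u_i$ satisfies $-\Delta u_i \leq -c_-(x)u_i + \mu|\nabla u_i|^2 + h(x)$ weakly, i.e.\ $u_i$ is a subsolution of \eqref{P0-ch3}. By Lemma \ref{compPrincipleP0-ch3}, this forces $u_i \leq u_0$, and in particular, on $\{c_+ > 0\}$, $u_i \leq u_0 \leq 0$. The first step is to record this a priori upper bound, which is the key structural information.

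Next, I would perform the (inverse) Cole--Hopf substitution: set $z_i := e^{\mu u_i}$, so that $z_i > 0$, $z_i = 1$ on $\partial\Omega$, $z_i \leq 1$ on $\{c_+ > 0\}$ (with the strict bound $z_i \leq e^{\mu u_0} < 1$ on the positive-measure set $\{c_+ u_0 < 0\}$), and a direct computation shows
\[
-\Delta z_i = c_\lambda(x)\, z_i \ln z_i + \mu\, h(x)\, z_i, \quad i=1,2.
\]
The semilinear equations for $z_1, z_2$ are amenable to a Diaz-Saa--Picone identity: the test functions $\phi_1 := (z_1^2-z_2^2)/z_1$ and $\phi_2 := (z_2^2-z_1^2)/z_2$ both vanish on $\partial\Omega$ and lie in $H_0^1(\Omega)$ (they are in fact Lipschitz, since the $z_i$ are in $\mathcal{C}^1(\overline\Omega)$ and bounded away from $0$). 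Testing each equation with the corresponding $\phi_i$ and adding, the standard algebraic manipulation yields
\[
\int_\Omega (z_1^2+z_2^2)\,\bigl|\nabla \ln(z_1/z_2)\bigr|^2\,dx = \int_\Omega c_\lambda(x)\,(\ln z_1 - \ln z_2)(z_1^2 - z_2^2)\,dx.
\]
The left-hand side is $\geq 0$, with equality if and only if $z_1/z_2$ is constant on each connected component of $\Omega$ (so, using the boundary condition $z_i=1$ on $\partial\Omega$, if and only if $z_1 \equiv z_2$, i.e.\ $u_1\equiv u_2$).

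It therefore suffices to show the right-hand side is $\leq 0$. The integrand is the product of $c_\lambda(x)$ with the pointwise non-negative quantity $(\ln z_1 - \ln z_2)(z_1^2-z_2^2)$. On $\{c_- > 0\}$, where $c_\lambda = -c_- \leq 0$, the contribution is automatically non-positive. On $\{c_+ > 0\}$, the contribution is a priori non-negative, and this is the delicate point: the strict bound $z_i \leq 1$ there (together with the strict inequality $c_+ u_0 \lneqq 0$, which propagates to give $z_i < 1$ on a set of positive measure of $\{c_+ > 0\}$) must be used to absorb this positive contribution into the LHS, via a weighted eigenvalue/Poincar\'e-type estimate for the operator $-\operatorname{div}(e^{\mu(u_1+u_2)}\nabla \cdot)$ against the weight $c_+$.

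\textit{The main obstacle} I expect is precisely this last step: controlling the $\{c_+ > 0\}$ contribution for all $\lambda > 0$. A naive weighted-eigenvalue estimate only handles small $\lambda$, so the argument must exploit that both $u_1, u_2 \leq u_0$ with the strict inequality $c_+ u_0 \lneqq 0$ — this is presumably the reason this hypothesis (rather than merely $c_+u_0 \leq 0$) appears in the statement. A natural fallback is to rewrite the linearised equation for $w := u_1-u_2$ in the symmetric divergence form $-\operatorname{div}(e^{\mu(u_1+u_2)}\nabla w) = c_\lambda e^{\mu(u_1+u_2)} w$ in $H_0^1(\Omega)$, and combine the Picone identity above with the maximum/anti-maximum principle for the linearised operator proved in Proposition \ref{prop-maximum-anti-maximum-ch3} to rule out the existence of a nontrivial $w$.
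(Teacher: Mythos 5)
Your reduction to $\lambda > 0$ via \cite[Theorem 1.1]{A_DC_J_T_2014} and your first structural observation ($u_i$ is a lower solution of \eqref{P0-ch3}, hence $u_i \leq u_0$ by Lemma \ref{compPrincipleP0-ch3}) agree with the paper. From there the two arguments diverge completely, and yours has a gap that you yourself flag but do not close.

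Your Cole--Hopf/Picone computation is correct as far as it goes: with $z_i = e^{\mu u_i}$ and the test functions $\phi_i = (z_i^2 - z_j^2)/z_i$, the $h$-terms cancel and one obtains
\[
\int_\Omega (z_1^2+z_2^2)\bigl|\nabla \ln(z_1/z_2)\bigr|^2\,dx
+ \int_\Omega c_-(x)\,(\ln z_1 - \ln z_2)(z_1^2 - z_2^2)\,dx
= \lambda \int_\Omega c_+(x)\,(\ln z_1 - \ln z_2)(z_1^2 - z_2^2)\,dx.
\]
All three integrals are non-negative (the pointwise quantity $(\ln z_1 - \ln z_2)(z_1^2 - z_2^2)$ is $\geq 0$ regardless of whether $z_i \lessgtr 1$), so the identity yields no contradiction on its own. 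The pointwise bound $z_i \leq 1$ on $\{c_+ > 0\}$ does not help here: it does not change the sign of the right-hand integrand, and a weighted Poincar\'e-type absorption of the $c_+$ term into the left side would only control a bounded range of $\lambda$, not every $\lambda > 0$. Your ``fallback'' via Proposition \ref{prop-maximum-anti-maximum-ch3} runs into the same issue, because that result distinguishes the cases $\lambda < \gamma_1$ and $\lambda > \gamma_1$ rather than giving a uniform conclusion for all $\lambda > 0$. So the hard part of the proposition is genuinely open in your plan.

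The paper's proof avoids the variational identity altogether and uses an order-theoretic sliding argument exploiting the convexity of $s \mapsto \mu|s|^2$. After proving (Step 1) that every lower solution $u$ with $c_+ u \leq 0$ satisfies the \emph{strict} comparison $u \ll u_0$ (this is where $c_+ u_0 \lneqq 0$ and the strong maximum principle, Theorem \ref{SMP-ch3}, enter), one may assume (Step 2) that the two putative solutions are ordered, $u_1 \lneqq u_2 \ll u_0$. Then (Step 3) the key new object is the critical parameter
\[
\bar\varepsilon = \min\bigl\{\varepsilon > 0 : u_2 - u_1 \leq \varepsilon(u_0 - u_2)\bigr\},
\]
which is finite because $u_0 - u_2 \gg 0$. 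The function $w_{\bar\varepsilon} = \bigl((1+\bar\varepsilon)u_2 - u_1\bigr)/\bar\varepsilon$ is a lower solution to \eqref{Plambda-ch3} (by convexity of the gradient nonlinearity, as in \cite[Proposition 4.3]{DC_J_2017}) satisfying $w_{\bar\varepsilon} \leq u_0$, hence $c_+ w_{\bar\varepsilon} \leq c_+ u_0 \leq 0$. Applying Step 1 to $w_{\bar\varepsilon}$ gives $w_{\bar\varepsilon} \ll u_0$, which improves the inequality $u_2 - u_1 \leq \bar\varepsilon(u_0 - u_2)$ strictly and contradicts the minimality of $\bar\varepsilon$. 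This argument is what makes the result hold for \emph{all} $\lambda > 0$ without any smallness condition, precisely the feature your Picone route cannot deliver.
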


\begin{proof} First of all, observe that for $\lambda \leq 0$ the result follows from \cite[Theorem 1.1]{A_DC_J_T_2014}. Hence, we just consider the case $\lambda > 0$. Let us split the proof into three steps:
\medbreak
\noindent \textbf{Step 1:} \textit{If $u$ is a lower solution to \eqref{Plambda-ch3} with $c_{+}u \leq 0$ then $u \ll u_0$.}
\medbreak
Let $u$ be solution to \eqref{Plambda-ch3} with $c_{+}u \leq 0$. We easily observe that $u$ is a lower solution to \eqref{P0-ch3}. Hence, by Lemma \ref{compPrincipleP0-ch3}, it follows that $u \leq u_0$. Now, let us introduce $w = u_0 - u$ and observe that
\[ \left\{
\begin{aligned}
-\Delta w - \mu \langle \nabla u_0 + \nabla u, \nabla w \rangle + c_{-}(x) w & \geq -\lambda c_{+}(x) u_0\,, \quad & \textup{ in } \Omega\,, \\
w & = 0\,,& \textup{ on } \partial \Omega\,.
\end{aligned}
\right.
\]
Hence, by Theorem \ref{SMP-ch3}, we deduce that $w \gg 0$ and so, that $u \ll u_0$, as desired.
\medbreak
\noindent \textbf{Step 2:} \textit{If we have $u_1$ and $u_2$ two solutions to \eqref{Plambda-ch3} with $c_{+}u_1 \leq 0$ and $c_{+}u_2 \leq 0$ then we have two ordered solutions $\tilde{u}_1 \lneqq \tilde{u}_2 \leq u_0\,.$}
\medbreak
By Step 1, we have  $u_1\ll u_0$ and $u_2\ll u_0$. In case $u_1$ and $u_2$ are not ordered, using that $u_0$ is an upper solution to \eqref{Plambda-ch3} and the maximum of two lower solutions is a lower solution, by Theorem \ref{BMP1988-ch3}, we deduce the existence of $u_3$ solution to \eqref{Plambda-ch3} such that $\max\{u_1,u_2\} \leq u_3 \leq u_0$. We conclude taking $\tilde{u}_1 = u_1$ and $\tilde{u}_2 = u_3$.
\bigbreak
\noindent \textbf{Step 3:} \textit{Conclusion.}
\smallbreak
We assume by contradiction the existence of $u_1$ and $u_2$ solutions to \eqref{Plambda-ch3} such that $c_{+}u_1 \leq 0$ and $c_{+}u_2 \leq 0$. By Steps 1 and 2, we can suppose without loss of generality  that $u_1 \lneqq u_2\ll u_0$. As $u_0- u_2\gg 0$, observe that the set $\{v\in C^1_0(\overline\Omega)\mid v\leq  u_0- u_2\}$ is an open neighbourhood of $0$ and so, that the set  $\{\varepsilon>0\mid u_2-u_1\leq \varepsilon (u_0- u_2)\}$ is not empty. We define 
$$
\bar\varepsilon: =\inf\{\varepsilon>0\mid u_2-u_1\leq \varepsilon  (u_0- u_2)\}.
$$
Observe that  $0<\bar\varepsilon <\infty$ and 
\begin{equation}
\label{eq min}
\bar\varepsilon=\min\{\varepsilon>0\mid u_2-u_1\leq \varepsilon  (u_0- u_2)\}.
\end{equation}
Now, we define
 $$
w_{\bar\varepsilon}=\frac{(1+\bar\varepsilon)u_2-u_1}{\bar\varepsilon},
$$
and, arguing as in \cite[Proposition 4.3]{DC_J_2017}, we deduce that $w_{\overline{\epsilon}}$ is a lower solution to \eqref{Plambda-ch3}. By the choice of ${\bar\varepsilon}>0$, $w_{\bar\varepsilon}\leq u_0$ and so $c_+(x)w_{\bar\varepsilon}\leq c_+(x) u_0\lneqq 0$. Hence, by Step 1, it follows that $w_{\bar\varepsilon}\ll u_0$, which  contradicts the definition of $\overline{\varepsilon}$ given in \eqref{eq min}.
\end{proof}

\begin{proof}[\textbf{Proof of Theorem \ref{th3-ch3}}] As in the proof of Theorem \ref{th2-ch3}, we define $v_0:= \frac{1}{\mu}\left( e^{\mu u_0} -1 \right) \in \mathcal{C}_0^1(\overline{\Omega})$ and we use the compact notation $I_{\lambda}:= I_{\lambda,\, \alpha_{\lambda}}$. Then, we split the proof into several steps:

\medbreak
\noindent \textbf{Step 1:}\textit{ For every $\lambda > 0$, there exists $v \in \mathcal{C}_0^1(\overline{\Omega})$ with $v \ll v_0$ which is a local minimum of $I_{\lambda}$ and a solution to $(Q_{\lambda,\, \alpha_{\lambda}})$.}
\medbreak
By Proposition \ref{propLowerSol-ch3} we have the existence of a strict lower solution $\alpha_{\lambda}$ with $\alpha_{\lambda} \ll \beta$ for every $\beta$ upper solution to \eqref{Plambda-ch3}. On the other hand, arguing as in Lemma \ref{lemmaStrictLowerAndUpperPositive-ch3}, we prove that $u_0$ is a strict upper solution to \eqref{Plambda-ch3} for all $\lambda > 0$. Then, reasoning as in the proof of Theorem \ref{th2-ch3}, we deduce the existence of $v \in \mathcal{C}_0^1(\overline{\Omega})$ local minimum of $I_{\lambda}$ and solution to $(Q_{\lambda,\, \alpha_{\lambda}})$. By its construction $v \ll v_0$.
\medbreak
\noindent \textbf{Step 2:} \textit{For every $\lambda > 0$ the exists at least two solutions $u_{\lambda,1}$, $u_{\lambda,2} \in \mathcal{C}_0^1(\overline{\Omega})$ to \eqref{Plambda-ch3} such that}
\[ u_{\lambda,1} \ll u_{\lambda,2}, \quad u_{\lambda,1} \ll u_0 \quad \textup{ and } \quad  
c_+u_{\lambda,2}\not\leq 0. 
\] 
\smallbreak
By Step 1 and Lemma \ref{equivProblems-ch3}, there exists a first solution $u_{\lambda,1} \in \mathcal{C}_0^1(\overline{\Omega})$ such that $u_{\lambda,1} \ll u_0$. Then, arguing as in Step 3 of the proof of Theorem \ref{th2-ch3}, we deduce the existence of a second solution $u_{\lambda,2} \in \mathcal{C}_0^1(\overline{\Omega})$ with $u_{\lambda,2} \gg u_{\lambda,1}$. Finally, by Proposition \ref{uniquenessNonPositiveSolution-ch3}, we deduce that 
$c_+u_{\lambda,2}\not\leq 0$.
\medbreak
\noindent \textbf{Step 3:} \textit{$\lambda_1 < \lambda_2$ implies $u_{\lambda_1,1} \gg u_{\lambda_2,1}$}.
\smallbreak
The proof follows as in Step 4 of the proof of Theorem \ref{th2-ch3}.
\end{proof}





\begin{proof}[\textbf{Proof of the second part of Corollary \ref{cor-ch3}}]
Since $h \lneqq 0$ and \eqref{A2-ch3} holds, the problem \eqref{P0-ch3} has always a solution $u_0 \in \mathcal{C}_0^1(\overline{\Omega})$. Moreover, observe that $0$ is an upper solution to \eqref{P0-ch3}. Hence, by Lemma \ref{compPrincipleP0-ch3}, it follows that $u_0 \leq 0$. Now, as $u_0$ satisfies
\[ -\Delta u_0 - \mu \langle \nabla u_0, \nabla u_0 \rangle + c_{-}(x) u_0 = h(x) \lneqq 0\,, \quad \textup{ in } \Omega\,,\]
we deduce by Theorem \ref{SMP-ch3} that $u_0 \ll 0$ and, in particular that $c_{+}u_0 \lneqq 0$. Thus, the corollary follows immediately from Theorem \ref{th3-ch3}.
\end{proof}

\begin{lemma}  \label{strict-upper-cu0 0 negative-ch3}
Assume \eqref{A2-ch3}, suppose that \eqref{P0-ch3} has a solution $u_0$ with $c_{+} u_0 \equiv 0$ and let $\gamma_1 > 0$ be the principal eigenvalue of \eqref{linearized-eigenvalue-problem-ch3}. Then, for all $\lambda > \gamma_1$, there exists $\beta \in \mathcal{C}_0^1(\overline{\Omega})$ strict upper solution to \eqref{Plambda-ch3} satisfying $\beta \ll u_0$.
\end{lemma}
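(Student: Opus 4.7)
The plan is to mirror the proof of Lemma~\ref{strict-upper-cu0 0 positive-ch3}, replacing the maximum principle part of Proposition~\ref{prop-maximum-anti-maximum-ch3} by its anti-maximum counterpart and flipping the sign of the perturbation of $u_0$. Let $\delta > 0$ denote the constant provided by Proposition~\ref{prop-maximum-anti-maximum-ch3} applied with $m = c_+$ and $h \equiv 1$, so that for every $\lambda \in \,]\gamma_1,\gamma_1+\delta[$ the non-homogeneous problem $L_{u_0}(w) = \lambda c_+(x) w + 1$ admits a solution $w \ll 0$. I would set $\widetilde w_\lambda := -w \gg 0$, which by linearity of $L_{u_0}$ satisfies $L_{u_0}(\widetilde w_\lambda) = \lambda c_+(x) \widetilde w_\lambda - 1$, and define
\[ \beta := u_0 - \epsilon\, \widetilde w_\lambda, \qquad \epsilon > 0. \]

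For $\lambda \in \,]\gamma_1,\gamma_1+\delta[$, a direct computation using the equation solved by $\widetilde w_\lambda$, the equation satisfied by $u_0$ (recalling $c_+ u_0 \equiv 0$), and the expansion $|\nabla \beta|^2 = |\nabla u_0|^2 - 2\epsilon \langle \nabla u_0, \nabla \widetilde w_\lambda \rangle + \epsilon^2 |\nabla \widetilde w_\lambda|^2$, yields
\[ -\Delta \beta - c_\lambda(x) \beta - \mu |\nabla \beta|^2 - h(x) = \epsilon - \mu \epsilon^2 |\nabla \widetilde w_\lambda|^2. \]
Since $\widetilde w_\lambda \in \mathcal{C}_0^1(\overline{\Omega})$ under \eqref{A2-ch3}, the gradient $|\nabla \widetilde w_\lambda|$ is bounded, so for $\epsilon > 0$ sufficiently small the right-hand side is strictly positive a.e.; this simultaneously makes $\beta$ an upper solution to \eqref{Plambda-ch3} and ensures $\beta$ is not itself a solution. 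Moreover $\widetilde w_\lambda \gg 0$ yields $\beta \ll u_0$: strict interior inequality from $\widetilde w_\lambda > 0$, and at $\partial\Omega$ the boundary condition $\partial_\nu \widetilde w_\lambda < 0$ (forced by $\widetilde w_\lambda \gg 0$) gives $\partial_\nu \beta > \partial_\nu u_0$.

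To cover the range $\lambda \geq \gamma_1 + \delta$, I would fix once and for all some $\lambda_0 \in \,]\gamma_1, \gamma_1 + \delta[$ and use the associated $\beta$ as the upper solution for every $\lambda \geq \lambda_0$. The key observation is that $c_+ u_0 \equiv 0$ together with $\widetilde w_{\lambda_0} \geq 0$ gives $c_+(x) \beta(x) = -\epsilon\, c_+(x) \widetilde w_{\lambda_0}(x) \leq 0$, so for $\lambda \geq \lambda_0$,
\[ c_{\lambda_0}(x) \beta(x) - c_\lambda(x) \beta(x) = (\lambda_0 - \lambda)\, c_+(x)\, \beta(x) \geq 0, \]
whence $-\Delta \beta - c_\lambda(x) \beta - \mu |\nabla \beta|^2 - h(x) \geq \epsilon - \mu \epsilon^2 |\nabla \widetilde w_{\lambda_0}|^2$, still strictly positive on a set of positive measure.

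Finally, to establish strictness, I would take any solution $u$ to \eqref{Plambda-ch3} with $u \leq \beta$, set $\varphi := \beta - u \geq 0$, and subtract the equation for $u$ from the differential inequality satisfied by $\beta$ to obtain
\[ -\Delta \varphi - \mu \langle \nabla \beta + \nabla u, \nabla \varphi \rangle + c_-(x) \varphi \geq \lambda c_+(x) \varphi \geq 0 \quad \text{in } \Omega, \qquad \varphi = 0 \text{ on } \partial \Omega. \]
Since the defect of $\beta$ is strictly positive on a set of positive measure, $u \not\equiv \beta$, so $\varphi \not\equiv 0$. Applying Theorem~\ref{SMP-ch3} with drift $B = -\mu(\nabla \beta + \nabla u) \in (L^\infty(\Omega))^N$ and zero-order coefficient $c_- \in L^p(\Omega)$ with $c_- \geq 0$ then yields $\varphi \gg 0$, that is, $u \ll \beta$. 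The main obstacle I foresee is the sign-bookkeeping in the defect computation and the monotonicity argument for $\lambda \geq \gamma_1 + \delta$; beyond that, no essential new difficulty relative to Lemma~\ref{strict-upper-cu0 0 positive-ch3} should appear.
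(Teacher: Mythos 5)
Your proposal is correct and follows essentially the same route as the paper: choose $\lambda_0 \in \,]\gamma_1, \min\{\lambda,\gamma_1+\delta\}[$, use Proposition~\ref{prop-maximum-anti-maximum-ch3} to obtain $w\ll0$ solving $L_{u_0}(w)=\lambda_0 c_+ w + 1$, set $\beta:=u_0+\epsilon w$ for $\epsilon>0$ small, and verify the (strict) upper-solution property and $\beta\ll u_0$ exactly as in Lemma~\ref{strict-upper-cu0 0 positive-ch3}. Writing $\widetilde w_\lambda=-w$ is only a notational reflection, and handling $\lambda\geq\gamma_1+\delta$ by the sign of $(\lambda_0-\lambda)c_+\beta$ matches the role of the paper's choice $\lambda_0<\lambda$.
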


\begin{proof}
Let $\delta > 0$ be given by Proposition \ref{prop-maximum-anti-maximum-ch3} applied to $L_{u_0}$ (defined in \eqref{linearized-operator-ch3}) with  $m=c_+$ and $h \equiv 1$ and let $\lambda_0 \in\, ]\gamma_1, \min\{ \lambda, \gamma_1 + \delta\}[$. By Proposition \ref{prop-maximum-anti-maximum-ch3} we know that there exists $w \ll 0$ solution to 
\[ L_{u_0}(w) = \lambda_0 c_{+}(x) w + 1, \quad w \in H_0^1(\Omega).\]
We consider $\beta:= u_0 + \epsilon w$ with $\epsilon > 0$ and, arguing as in Lemma \ref{strict-upper-cu0 0 positive-ch3}, we deduce that, for $\epsilon > 0$ small enough, $\beta$ is a strict upper solution to \eqref{Plambda-ch3} satisfying $\beta \ll u_0$.
\end{proof}


\begin{proof}[\textbf{Proof of the rest of Theorem \ref{th-h-0-ch3}}]  Let us define $v_0:=  \frac{1}{\mu} \big( e^{\mu u_0} - 1 \big)$ and split the proof into three steps:%
\medbreak
\noindent \textbf{Step 1:} \textit{For every $\lambda > \gamma_1$, there exists $v \in \mathcal{C}_0^1(\overline{\Omega})$ with $v \ll v_0$ which is a local minimum of $I_{\lambda}$ in the $H_0^1$-topology and a solution to $(Q_{\lambda,\, \alpha_{\lambda}})$}
\medbreak
The proof follows 
 arguing as in Step 1 of the proof of Theorem \ref{th3-ch3} replacing $u_0$ by the strict upper solution constructed in Lemma \ref{strict-upper-cu0 0 negative-ch3}.
\medbreak
\noindent \textbf{Step 2:} \textit{For $\lambda > \gamma_1$, the problem \eqref{Plambda-ch3} has at least two solutions $u_{\lambda,1} \equiv u_0 \gg u_{\lambda,2} \in \mathcal{C}_0^1(\overline{\Omega})$.}
\medbreak
Since $c_{+} u_0 \equiv 0$, it is clear that $u_{\lambda,1}\equiv u_0$ is a solution for every $\lambda \in \R$. On the other hand, by Step 1, we deduce the existence of a second solution $u_{\lambda,2} \in \mathcal{C}_0^1(\overline{\Omega})$ with $u_{\lambda,2} \ll u_{\lambda,1} \equiv u_0$.

\medbreak
\noindent \textbf{Step 3:} \textit{For $\lambda = \gamma_1$, the unique solution to \eqref{Plambda-ch3} is $u_{\gamma_1} \equiv u_0$.}
\medbreak
We assume by contradiction that there exists $u$ solution to $(P_{\gamma_1})$ with $u \not \equiv u_0$. Let us then define $w := u - u_0$ and observe $w$ solves 
\[ -\Delta w = c_{\gamma_1}(x) w + \mu |\nabla u|^2 - \mu |\nabla u_0|^2, \quad w \in H_0^1(\Omega).\]
Equivalently, we have that $w \not \equiv 0$ is a solution to
\[ L_{u_0}(w) = \gamma_1 c_{+}(x) w + \mu |\nabla w|^2, \quad w \in H_0^1(\Omega).\]
Applying Proposition \ref{prop-maximum-anti-maximum-ch3} with $h = \mu |\nabla w|^2 \gneqq 0$ we obtain a contradiction and the result follows.
\end{proof}


\begin{proof}[\textbf{Proof of the rest of Corollary \ref{cor-ch3}}]
For $h \equiv 0$, the unique solution to \eqref{P0-ch3} is $u_0 \equiv 0$. Hence, the result follows from Theorem \ref{th-h-0-ch3}.
\end{proof}

\appendix
\section{Hopf's Lemma ans SMP with unbounded lower order terms} \label{Appendix-ch3}
 
In this section we prove Theorem \ref{SMP-ch3} which can be seen as a combination of the Strong maximum principle and the Hopf's Lemma.  As said in Section \ref{II-ch3}, our proof is inspired by \cite{rosales_2019}. Let us begin with some preliminary results that will be needed to prove Theorem \ref{Hopf-ch3}. Throughout the appendix we assume $N \geq 2$. 
 
\begin{lemma} \label{SMP-lemma1-ch3} \rm \cite[Lemma 4.2]{K_S_2019} \it Let $\Omega \subset \RN$ be a bounded domain, $\beta \in (L^N(\Omega))^N$ and $\xi \in L^{N/2}(\Omega)$ with $\xi \geq 0$. Then, for every $F \in H^{-1}(\Omega)$, there exists a unique  solution $u \in  H_0^1(\Omega)$ to 
\begin{equation*}
\left\{
\begin{aligned}
-\Delta u + \langle \beta(x), \gradu \rangle + \xi(x)u & = F, \quad & \textup{ in } \Omega,\\
u & = 0, \quad & \textup{ on } \partial \Omega.
\end{aligned}
\right.
\end{equation*}
\end{lemma}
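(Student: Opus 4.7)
The plan is to combine the Lax-Milgram theorem with the Fredholm alternative, the sign condition $\xi \geq 0$ intervening only at the final uniqueness step. First I would introduce the bilinear form
$$a(u,v) := \int_\Omega \nabla u \cdot \nabla v\, dx + \int_\Omega \langle \beta(x), \nabla u\rangle v\, dx + \int_\Omega \xi(x) u v\, dx$$
on $H_0^1(\Omega) \times H_0^1(\Omega)$ and check that it is well-defined and continuous by repeatedly combining H\"older's inequality with the Sobolev embedding $H_0^1(\Omega) \hookrightarrow L^{2^*}(\Omega)$; the same embedding shows that every $F \in H^{-1}(\Omega)$ defines a continuous linear form on $H_0^1(\Omega)$.

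To gain coercivity I would use the density of $L^{\infty}(\Omega)^N$ in $L^N(\Omega)^N$ to split $\beta = \beta_1 + \beta_2$ with $\|\beta_1\|_N < \varepsilon$ arbitrarily small and $\beta_2 \in L^\infty(\Omega)^N$. Testing the shifted form $a_\lambda(u,v) := a(u,v) + \lambda \int_\Omega uv\, dx$ against $v = u$, discarding the nonnegative term $\int_\Omega \xi u^2\, dx$, using Sobolev to bound the contribution of $\beta_1$ and Young's inequality to bound the contribution of $\beta_2$, one arrives at
$$a_\lambda(u,u) \geq \bigl(1 - S\varepsilon - \tfrac{1}{4}\bigr)\|\nabla u\|_2^2 + \bigl(\lambda - \|\beta_2\|_\infty^2\bigr)\|u\|_2^2,$$
where $S$ is the Sobolev constant. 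Choosing $\varepsilon$ sufficiently small and $\lambda$ sufficiently large, $a_\lambda$ is coercive, so Lax-Milgram ensures that $T_\lambda := T + \lambda I : H_0^1(\Omega) \to H^{-1}(\Omega)$ is an isomorphism, where $T$ denotes the operator associated to $a$.

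Next I would rewrite $Tu = F$ as $u - \lambda T_\lambda^{-1} u = T_\lambda^{-1} F$. The map $K := \lambda T_\lambda^{-1}$ factors through the compact embedding $H_0^1(\Omega) \hookrightarrow L^2(\Omega) \hookrightarrow H^{-1}(\Omega)$ and so is a compact endomorphism of $H_0^1(\Omega)$ by Rellich-Kondrachov. By the Fredholm alternative, existence and uniqueness for every $F \in H^{-1}(\Omega)$ reduce to showing that any $u \in H_0^1(\Omega)$ with $Tu = 0$ vanishes.

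This last point, the weak maximum principle for $T$, is the hard part. Since $-u$ also satisfies $Tu=0$ it suffices to prove $u \leq 0$. Using $\varphi = (u-k)^+ \in H_0^1(\Omega)$ as test function for $k \geq 0$ and exploiting $\xi u (u-k)^+ \geq 0$ on $\{u > k\}$ yields
$$\|\nabla (u-k)^+\|_2^2 \leq S \,\|\beta\, \chi_{\{u>k\}}\|_N\, \|\nabla (u-k)^+\|_2^2.$$
Since $u \in L^{2^*}(\Omega)$, $|\{u > k\}| \to 0$ as $k \to \infty$ and absolute continuity of the integral then forces $\|\beta\, \chi_{\{u>k\}}\|_N \to 0$, so $(u-k_0)^+ \equiv 0$ for some $k_0 > 0$ and in particular $u^+ \in L^\infty(\Omega)$. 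A De Giorgi-Stampacchia iteration on the level sets, estimating $h\,|\{u > k+h\}|^{1/2^*}$ by $\|(u-k)^+\|_{2^*}$ via Sobolev and feeding the bound back into the truncation inequality above, then pushes the $L^\infty$ bound down from $k_0$ to $0$, yielding $u \leq 0$ a.e.; applying the same reasoning to $-u$ finishes the proof.
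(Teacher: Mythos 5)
The overall strategy you propose --- bilinear form, splitting $\beta = \beta_1 + \beta_2$ with $\|\beta_1\|_N$ small and $\beta_2$ bounded, shifting by $\lambda$ to gain coercivity, Lax--Milgram, then Fredholm's alternative with compactness supplied by $H_0^1(\Omega)\hookrightarrow L^2(\Omega)\hookrightarrow H^{-1}(\Omega)$, reducing everything to injectivity for the homogeneous problem --- is correct and standard. The paper itself offers no proof (it simply cites Kim--Sakellaris), so there is no in-paper argument to compare against; the question is only whether your argument closes.

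It does not quite close: the weak maximum principle step has a genuine gap. The truncation inequality
$\|\nabla (u-k)^+\|_2^2 \leq S\,\|\beta\,\chi_{\{u>k\}}\|_N\,\|\nabla(u-k)^+\|_2^2$
is a \emph{self-improving alternative}, not a quantitative level-set estimate: when $S\|\beta\chi_{\{u>k\}}\|_N<1$ it forces $(u-k)^+\equiv 0$, and otherwise, after cancelling $\|\nabla(u-k)^+\|_2^2$, it merely records $S\|\beta\chi_{\{u>k\}}\|_N\geq 1$. There is nothing to ``feed back''. From absolute continuity you correctly get $u^+\leq k_0$ for $k_0$ large, but the argument stalls there. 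Letting $M:=\esssup u^+\in(0,k_0]$, as $k\nearrow M$ the set $\{u>k\}$ decreases to $\{u\geq M\}=\{u=M\}$, which may well have \emph{positive} measure, so there is no reason for $\|\beta\chi_{\{u>k\}}\|_N$ to fall below $1/S$. And a Stampacchia iteration on $\phi(k)=|\{u>k\}|$ cannot conclude $M=0$ here: with no source term and only $\beta\in L^N$, the Caccioppoli estimate produces no inequality of the form $\phi(k+h)\leq C\phi(k)^\alpha/h^\gamma$ with constants independent of $u$ on which to bootstrap down to $k=0$; at best it reproduces the boundedness already obtained, and the conclusion would scale like $\esssup u^+ \leq M\cdot c(\phi(0))$, which is $\leq 0$ only under a smallness assumption on $|\{u>0\}|$ that you do not have.

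The missing ingredient is the standard fact that $\nabla u = 0$ a.e.\ on the level set $\{u = M\}$. With this, the drift term is supported on $\{k<u<M\}$, and you may replace $\chi_{\{u>k\}}$ by $\chi_{\{k<u<M\}}$ to get
\[
\|\nabla(u-k)^+\|_2^2 \leq S\,\|\beta\,\chi_{\{k<u<M\}}\|_N\,\|\nabla(u-k)^+\|_2^2.
\]
Now $|\{k<u<M\}|\to 0$ as $k\nearrow M$ (independently of whether $|\{u=M\}|>0$), so absolute continuity does force the constant below $1$ for $k$ close to $M$, giving $(u-k)^+\equiv 0$ and contradicting the definition of $M$. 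This observation --- or some equivalent device --- is what makes the weak maximum principle with $\beta\in L^N$, $\xi\in L^{N/2}$, $\xi\geq 0$, and no smallness condition actually work. As a minor secondary point, your use of the Sobolev embedding $H_0^1(\Omega)\hookrightarrow L^{2^*}(\Omega)$ tacitly requires $N\geq 3$; for $N=2$ the bilinear form is not even continuous on $H_0^1\times H_0^1$ with merely $\beta\in L^2$, so an adaptation (test functions in $H_0^1\cap L^\infty$, or $\beta$ with higher integrability, as indeed holds in the paper's application) is needed.
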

 
As a consequence of the previous lemma we obtain an existence result with inhomogeneous boundary conditions. 
\vspace{-0.1cm}
\begin{cor} \label{SMP-corollary2-ch3}
Let $\omega := B_1(0) \setminus \overline{B_{1/2}(0)}$, $\beta \in (L^p(\omega))^N$ and $\xi \in L^p(\omega)$ for some $p > N$ and assume that $\xi \geq 0$. Then, there exists a unique  solution $u \in \mathcal{C}^{1,\tau}(\overline{\omega})$ for some $\tau > 0$ to
\begin{equation} \label{SMP-corollary2-equation-ch3}
\left\{
\begin{aligned}
-\Delta u + \langle \beta(x), \gradu \rangle + \xi(x)u & = 0, \quad & \textup{ in } \omega,\\
u & = 0, \quad & \textup{ on } \partial B_1(0),\\
u & = 1, \quad & \textup{ on } \partial B_{1/2}(0),
\end{aligned}
\right.
\end{equation}
such that $0 \leq u \leq 1$ in $\overline{\omega}$.
\end{cor}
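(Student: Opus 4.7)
The plan is to reduce the inhomogeneous boundary value problem to one in $H_0^1(\omega)$ by subtracting a smooth lift of the boundary data, invoke Lemma \ref{SMP-lemma1-ch3} for existence and uniqueness in $H_0^1(\omega)$, then use elliptic regularity to upgrade to $\mathcal{C}^{1,\tau}(\overline{\omega})$, and finally apply a weak maximum principle to get $0\leq u\leq 1$.

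First I would pick a cut-off $\eta\in \mathcal{C}^\infty(\overline{\omega})$ with $\eta \equiv 1$ in a neighborhood of $\partial B_{1/2}(0)$ and $\eta\equiv 0$ in a neighborhood of $\partial B_{1}(0)$. Writing $u = v + \eta$, the inhomogeneous problem \eqref{SMP-corollary2-equation-ch3} is equivalent to finding $v\in H_0^1(\omega)$ satisfying
\begin{equation*}
-\Delta v + \langle \beta(x),\nabla v\rangle + \xi(x) v = \Delta\eta - \langle\beta(x),\nabla \eta\rangle - \xi(x)\eta =: F\,,
\end{equation*}
where $F\in L^p(\omega)\hookrightarrow H^{-1}(\omega)$, since $\beta,\,\xi\in L^p(\omega)$ and $\eta$ is smooth. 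Because $p>N$ gives $\beta\in (L^N(\omega))^N$ and $\xi\in L^{N/2}(\omega)$ with $\xi\geq 0$, Lemma \ref{SMP-lemma1-ch3} yields a unique $v\in H_0^1(\omega)$, and therefore a unique $u\in H^1(\omega)$ with the prescribed boundary traces.

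Next, for the regularity, I would rewrite the equation as $-\Delta u = -\langle\beta,\nabla u\rangle - \xi u$. Since $u\in H^1(\omega)$ and $\omega$ has a $\mathcal{C}^{1,1}$ boundary consisting of two spheres (with smooth trace $0$ and $1$), a standard bootstrap using the Calderón--Zygmund estimates in $W^{2,q}$ and Sobolev embedding raises the integrability of $u$ and $\nabla u$ step by step, ultimately producing $u\in W^{2,p}(\omega)$. The Morrey embedding $W^{2,p}(\omega)\hookrightarrow \mathcal{C}^{1,\tau}(\overline{\omega})$, with $\tau = 1-N/p>0$, then gives the desired regularity.

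Finally, for the two-sided bound, $u\in W^{2,p}(\omega)\subset \mathcal{C}^{1,\tau}(\overline{\omega})$ is smooth enough to apply the Aleksandrov--Bakelman--Pucci / weak maximum principle for operators in non-divergence form with $L^p$ coefficients ($p>N$). On the one hand, the function $w:= u-1$ lies in $W^{2,p}(\omega)$ and satisfies
\begin{equation*}
-\Delta w + \langle\beta(x),\nabla w\rangle + \xi(x) w = -\xi(x) \leq 0\,,\quad w\leq 0 \textup{ on }\partial \omega,
\end{equation*}
so $w\leq 0$, i.e.\ $u\leq 1$ in $\overline{\omega}$. On the other hand, $-u\in W^{2,p}(\omega)$ solves $-\Delta(-u) + \langle\beta,\nabla(-u)\rangle + \xi(-u) = 0$ with $-u\leq 0$ on $\partial \omega$, so the same weak maximum principle gives $u\geq 0$. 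The step I expect to be most delicate is the correct invocation of the maximum principle with only $L^p$ coefficients; this is where the assumption $p>N$, which puts us safely above the scaling-critical threshold, is used essentially, and where a careful reference or self-contained truncation argument (multiplying by $(u-1)^+$ or $u^-$ and exploiting $\xi\geq 0$ together with the Sobolev embedding $H_0^1\hookrightarrow L^{2p/(p-2)}$ with $2p/(p-2)<2^\ast$) is required.
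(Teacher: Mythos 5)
Your proposal is correct and follows essentially the same route as the paper: subtract a lift of the boundary data, invoke Lemma \ref{SMP-lemma1-ch3} for existence and uniqueness in $H_0^1(\omega)$, upgrade regularity to $\mathcal{C}^{1,\tau}(\overline{\omega})$, and obtain the bound $0\leq u\leq 1$ by a maximum principle. The only cosmetic differences are that the paper uses the explicit barrier $\varphi(x)=\tfrac{4}{3}(1-|x|^2)$ rather than an abstract cutoff, and it applies a weak ($H^1$-level) maximum principle from \cite{K_S_2019} \emph{before} invoking the regularity result of Ladyzhenskaya--Uraltseva, whereas you first bootstrap to $W^{2,p}$ and then apply a non-divergence-form maximum principle; both orderings are valid.
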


\begin{proof}
Let us consider $\varphi \in C^{\infty}(\RN)$ given by 
$ \varphi(x) = \frac{4}{3}(1-|x|^2),$ and observe that $\varphi(x) = 0$ for all $ x \in \partial B_1(0)$ and  $\varphi(x) = 1$ for all $x \in \partial B_{1/2}(0)$. Moreover, by direct computations it follows that
\[ -\Delta \varphi + \langle \beta(x), \nabla \varphi \rangle + \xi(x) \varphi = \frac{8N}{3} - \frac{8}{3} \langle \beta(x), x \rangle + \frac{4}{3} \xi(x) (1-|x|^2) =: - F \in H^{-1}(\omega).\]
By Lemma \ref{SMP-lemma1-ch3} we know that there exists a unique solution   $w \in H_0^1(\omega)$   to
\begin{equation*}
\left\{
\begin{aligned}
-\Delta w + \langle \beta(x), \nabla w \rangle + \xi(x)w & = F, \quad & \textup{ in } \omega,\\
w & = 0, \quad & \textup{ on } \partial \omega.
\end{aligned}
\right.
\end{equation*}
Then, we define $u = w + \varphi$ and we observe that $u \in H^1(\omega)$ is a solution to \eqref{SMP-corollary2-equation-ch3}. Next, by \cite[Proposition 3.10]{K_S_2019} we deduce that $0 \leq u \leq 1$ in $\overline{\omega}$ and, by \cite[Theorem II-15.1]{L_U_1968} we obtain that $u \in \mathcal{C}^{1,\tau}(\overline{\omega})$ for some $\tau > 0$. Finally, the uniqueness follow again from \cite[Proposition 3.10]{K_S_2019}.
\end{proof}

\begin{lemma}  \label{SMP-lemma3-ch3}
Let $\omega := B_1(0) \setminus \overline{B_{1/2}(0)}$, $\epsilon \in (0,1/4)$, $x_0 \in \partial B_1(0)$ and $T: \RN \to \RN$ given by $T(x) = \epsilon^{-1}( x-x_0) + x_0$. Then, it follows that $\omega \subset T(\omega):= \{T(x): x \in \omega\}$.
\end{lemma}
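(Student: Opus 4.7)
The plan is to show the equivalent inclusion $T^{-1}(\omega)\subset \omega$. Since $T$ is an affine bijection of $\RN$ onto itself, this yields $\omega=T(T^{-1}(\omega))\subset T(\omega)$.

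First I would compute $T^{-1}$ explicitly: from $T(x)=\epsilon^{-1}(x-x_0)+x_0$ we get
\[
T^{-1}(y)=\epsilon(y-x_0)+x_0 = \epsilon y + (1-\epsilon)x_0.
\]
Fix $y\in\omega$, so that $1/2<|y|<1$, and set $z:=T^{-1}(y)$. The task reduces to establishing the two strict inequalities $|z|<1$ and $|z|>1/2$.

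For the upper bound I would use the triangle inequality together with $|x_0|=1$ and $|y|<1$, which give
\[
|z|=|\epsilon y + (1-\epsilon)x_0|\leq \epsilon|y|+(1-\epsilon)|x_0|<\epsilon+(1-\epsilon)=1.
\]
For the lower bound I would write $z-x_0=\epsilon(y-x_0)$, so that by the reverse triangle inequality and the rough estimate $|y-x_0|\leq |y|+|x_0|<2$,
\[
|z|\geq |x_0|-|z-x_0|=1-\epsilon|y-x_0|>1-2\epsilon>1-\tfrac{1}{2}=\tfrac{1}{2},
\]
where the last inequality uses the hypothesis $\epsilon<1/4$. Combining the two bounds yields $z\in\omega$, hence $T^{-1}(\omega)\subset\omega$, which completes the proof.

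There is no genuine obstacle here: the statement is a purely geometric assertion about the homothety $T$ centered at the boundary point $x_0$, and the whole argument consists of two applications of the triangle inequality, with the assumption $\epsilon<1/4$ being used precisely to ensure that the contraction factor in the dilation towards $x_0$ is strong enough to keep points of $\omega$ away from the inner sphere $\partial B_{1/2}(0)$.
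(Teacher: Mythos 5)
Your proof is correct and essentially the same as the paper's: you both reduce to two triangle-inequality estimates with the hypothesis $\epsilon<1/4$ entering only in the lower bound. The paper shows directly that $\omega$ satisfies the defining inequalities of $T(\omega)$ (written as the annulus $\{1/(2\epsilon)<|x-(1-\epsilon^{-1})x_0|<1/\epsilon\}$), whereas you check the equivalent inclusion $T^{-1}(\omega)\subset\omega$; since $|T^{-1}(y)|=\epsilon\,|y-(1-\epsilon^{-1})x_0|$, the two chains of inequalities are identical up to rescaling by $\epsilon$.
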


\begin{proof}
First of all, observe that
\begin{equation} \label{TO-1-ch3}
T(\omega) = B_{1/\epsilon} \left(\big(1-\frac{1}{\epsilon}\big)x_0 \right) \setminus \overline{B_{1/2\epsilon} \left(\big(1-\frac{1}{\epsilon}\big)x_0 \right)} = \left\{ x \in \RN: \frac{1}{2\epsilon} < \big| x - \big( 1 - \frac{1}{\epsilon}\big)x_0 \big| < \frac{1}{\epsilon} \right\}. 
\end{equation}
Now, observe that, for all $x \in \omega$ and all $\epsilon \in (0,1/4)$, it follows that
\begin{equation} \label{TO-2-ch3}
\big| x - \big( 1 - \frac{1}{\epsilon}\big)x_0 \big| \leq |x| + \big| 1 - \frac{1}{\epsilon}\big|\,|x_0| = |x| + \frac{1}{\epsilon}-1 < 1 + \frac{1}{\epsilon} - 1 = \frac{1}{\epsilon},
\end{equation}
and 
\begin{equation} \label{TO-3-ch3}
\big| x - \big( 1 - \frac{1}{\epsilon}\big)x_0 \big| \geq \left| |x| - \big| 1 - \frac{1}{\epsilon}\big|\,|x_0| \right| \geq  \frac{1}{\epsilon} - 1 - |x| > \frac{1}{\epsilon} - 2 > \frac{1}{2\epsilon}.
\end{equation}
Hence, the result follows from \eqref{TO-1-ch3}-\eqref{TO-3-ch3}.
\end{proof} 
 
\begin{lemma} \label{SMP-lemma4-ch3}
Let $\omega := B_1(0) \setminus \overline{B_{1/2}(0)}$, $B = (B^1, \ldots, B^N) \in (L^p(\omega))^N$ and $a \in L^p(\omega)$, for some $p > N$, $\epsilon \in [0,1/4]$, $B_{\epsilon}(y) = (B_\epsilon^1, \ldots, B_{\epsilon}^N) := \epsilon B(\epsilon(y-x_0) + x_0)$ and $a_{\epsilon}(y) := \epsilon^2 a(\epsilon(y-x_0) + x_0).$
Then, it follows that
\begin{itemize}
\item[$\bullet$] $\|B_{\epsilon}^i \|_{L^p(\omega)} \leq \epsilon^{1 - \frac{N}{p}} \|B^i\|_{L^p(\omega)},\qquad \forall\  i = 1, \ldots, N$;
\item[$\bullet$] $\|a_{\epsilon}\|_{L^p(\omega)} \leq \epsilon^{2 - \frac{N}{p}} \|a\|_{L^p(\omega)}$.
\end{itemize}
\end{lemma}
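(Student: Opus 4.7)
The plan is to carry out a direct change of variables. For the first estimate I would write
\[
\|B_{\epsilon}^i\|_{L^p(\omega)}^p = \int_{\omega} |B_{\epsilon}^i(y)|^p\,dy = \epsilon^p \int_{\omega} \bigl|B^i\bigl(\epsilon(y-x_0)+x_0\bigr)\bigr|^p\,dy,
\]
and then perform the substitution $x = \epsilon(y-x_0)+x_0$, which has Jacobian $\epsilon^N$ so that $dy = \epsilon^{-N}\,dx$. The image of $\omega$ under the map $y \mapsto x$ is exactly $T^{-1}(\omega)$, where $T$ is the dilation introduced in Lemma \ref{SMP-lemma3-ch3}.

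The key point is then to control this image. By Lemma \ref{SMP-lemma3-ch3} we have $\omega \subset T(\omega)$, and applying $T^{-1}$ to both sides yields $T^{-1}(\omega) \subset \omega$. Thus
\[
\|B_{\epsilon}^i\|_{L^p(\omega)}^p = \epsilon^{p-N} \int_{T^{-1}(\omega)} |B^i(x)|^p\,dx \leq \epsilon^{p-N} \int_{\omega} |B^i(x)|^p\,dx,
\]
and taking the $p$-th root gives the desired bound $\|B_{\epsilon}^i\|_{L^p(\omega)} \leq \epsilon^{1-N/p}\|B^i\|_{L^p(\omega)}$.

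The estimate for $a_\epsilon$ is obtained in exactly the same way, with the only change being that the factor $\epsilon^p$ in front is replaced by $\epsilon^{2p}$, producing $\epsilon^{2p-N}$ after the Jacobian is absorbed, and thus $\|a_\epsilon\|_{L^p(\omega)} \leq \epsilon^{2-N/p}\|a\|_{L^p(\omega)}$. There is no real obstacle here; the whole content of the lemma is the geometric observation already established in Lemma \ref{SMP-lemma3-ch3} combined with the scaling of Lebesgue measure.
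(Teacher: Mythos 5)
Your proof is correct and follows essentially the same route as the paper: change of variables $x = \epsilon(y-x_0)+x_0$ to absorb the Jacobian $\epsilon^{-N}$, then the set-inclusion to bound the resulting integral by the integral over $\omega$. Your way of deducing the inclusion, namely applying $T^{-1}$ to the established inclusion $\omega \subset T(\omega)$ of Lemma \ref{SMP-lemma3-ch3}, is a slightly cleaner observation than the paper's ``arguing as in Lemma \ref{SMP-lemma3-ch3},'' but it is the same geometric fact.
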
 
 
\begin{proof}
Let $i \in \{1, \ldots, N\}$. We directly observe that
\begin{equation} \label{ABC-1-ch3}
\|B_{\epsilon}^{i}\|_{L^p(\omega)}^p = \int_{\omega} |B_{\epsilon}^i(y)|^p dy = \epsilon^p \int_{\omega} |B^{i}(\epsilon(y-x_0)+x_0)|^p dy = \epsilon^{p-N} \int_{S(\omega)} |B^i(z)|^p dz,
\end{equation}
where $z = S(y) = \epsilon(y-x_0) + x_0$. Then, arguing as in Lemma \ref{SMP-lemma3-ch3}, we obtain that $S(\omega) \subset \omega$, and so, taking into account \eqref{ABC-1-ch3}, we deduce that
\[ \|B_{\epsilon}^{i}\|_{L^p(\omega)}^p \leq \epsilon^{p-N} \int_{\omega} |B^{i}(z)|^p dz = \epsilon^{p-N} \|B^{i}\|_{L^p(\omega)}^p .\]
The estimate for $a_{\epsilon}$ follows arguing on the same way.
\end{proof} 

Using the rescaled functions $B_{\epsilon}$ and $a_{\epsilon}$  defined in Lemma \ref{SMP-lemma4-ch3}, we introduce the auxiliary boundary value problem
\begin{equation} \label{Pepsilon-ch3} \tag{$P_{\epsilon}$}
\left\{
\begin{aligned}
-\Delta u + \langle B_{\epsilon}(x), \gradu \rangle + a_{\epsilon}(x)u & = 0, \quad & \textup{ in } B_1(0) \setminus \overline{B_{1/2}(0)},  \\
u & = 0, \quad & \textup{ on } \partial B_1(0),\\
u & = 1, \quad & \textup{ on } \partial B_{1/2}(0),
\end{aligned}
\right.
\end{equation}
and we prove the following uniform a priori bound that will be crucial in the proof of Theorem  \ref{Hopf-ch3}.
 
\begin{lemma} \label{SMP-lemma5-ch3}
Let $\omega:= B_1(0) \setminus \overline{B_{1/2}(0)}$, $B = (B_1, \ldots, B_N) \in (L^p(\omega))^N$ and $a \in L^p(\omega)$ for some $p > N$. Then, there exists $M > 0$ such that, for all $\epsilon \in [0,1/4]$, 
any solution $u$ to \eqref{Pepsilon-ch3} satisfies $\|u\|_{\mathcal{C}^1(\overline{\omega})} \leq M$.
\end{lemma}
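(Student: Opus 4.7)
The plan is to combine three ingredients: (i) a uniform $L^\infty$ bound on the solutions, (ii) uniform $L^p$ bounds on the rescaled coefficients, and (iii) a quantitative $\mathcal{C}^{1,\tau}$ elliptic regularity estimate. The key point is that uniformity in $\epsilon$ is essentially free once these three pieces are in place.

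First, we apply Corollary \ref{SMP-corollary2-ch3} with the rescaled coefficients $B_\epsilon \in (L^p(\omega))^N$ and $a_\epsilon \in L^p(\omega)$ (both well-defined since $a_\epsilon = \epsilon^2 a(\epsilon(\cdot - x_0) + x_0) \geq 0$). This immediately yields $0 \leq u \leq 1$ in $\overline{\omega}$, and hence $\|u\|_{L^\infty(\omega)} \leq 1$ uniformly in $\epsilon \in [0,1/4]$. Next, since $p > N$ gives $1 - N/p > 0$ and $2 - N/p > 0$, the estimates of Lemma \ref{SMP-lemma4-ch3} imply $\|B_\epsilon\|_{(L^p(\omega))^N} \leq \|B\|_{(L^p(\omega))^N}$ and $\|a_\epsilon\|_{L^p(\omega)} \leq \|a\|_{L^p(\omega)}$ for every $\epsilon \in [0,1/4]$, so the coefficients are uniformly bounded in $L^p(\omega)$.

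To obtain the uniform $\mathcal{C}^1$ bound, we reduce to homogeneous Dirichlet data by writing $u = w + \varphi$, where $\varphi(x) = (4/3)(1 - |x|^2) \in \mathcal{C}^\infty(\overline{\omega})$ is the smooth lifting used in the proof of Corollary \ref{SMP-corollary2-ch3}. Then $w \in H_0^1(\omega)$ solves $-\Delta w + \langle B_\epsilon, \nabla w \rangle + a_\epsilon w = F_\epsilon$ in $\omega$, with $F_\epsilon := \Delta \varphi - \langle B_\epsilon, \nabla \varphi \rangle - a_\epsilon \varphi$. Since $\varphi$ is smooth and the coefficients are uniformly bounded in $L^p$, the right-hand side $F_\epsilon$ is uniformly bounded in $L^p(\omega)$. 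Applying the quantitative form of \cite[Theorem II-15.1]{L_U_1968} (the same result already invoked in the proof of Corollary \ref{SMP-corollary2-ch3}) to this linear Dirichlet problem yields $w \in \mathcal{C}^{1,\tau}(\overline{\omega})$ with $\|w\|_{\mathcal{C}^{1,\tau}(\overline{\omega})}$ controlled by $\|F_\epsilon\|_{L^p}$, $\|B_\epsilon\|_{L^p}$, $\|a_\epsilon\|_{L^p}$, and $\|w\|_{L^\infty}$. As all four quantities are uniformly bounded in $\epsilon$, we conclude that $\|u\|_{\mathcal{C}^1(\overline{\omega})}$ is bounded by a constant $M$ independent of $\epsilon$.

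The main obstacle is ensuring that the elliptic regularity constant is truly quantitative in the $L^p$ norms of the coefficients (as opposed to depending qualitatively on the coefficients themselves); this is precisely the content of \cite[Theorem II-15.1]{L_U_1968}. Alternatively, the same uniform bound can be reached by a bootstrap starting from the $H^1$ energy estimate, successively improving the integrability of $\nabla u$ through Hölder's inequality and $W^{2,q}$ estimates until $\nabla u \in L^q(\omega)$ for some $q > N$, and then concluding by the Sobolev embedding $W^{2,q}(\omega) \hookrightarrow \mathcal{C}^{1,\tau}(\overline{\omega})$; tracking constants through each step gives the required uniformity.
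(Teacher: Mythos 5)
Your proposal is correct, but it takes a genuinely different route from the paper's. The paper argues by contradiction: assuming $\|u_n\|_{\mathcal{C}^1(\overline{\omega})}\to\infty$, it normalizes $v_n:=u_n/\|u_n\|_{\mathcal{C}^1}$, subtracts off the boundary data, and applies the $W^{2,p}$ estimate for the pure Laplacian (\cite[Lemma 9.17]{G_T_2001_S_Ed}) treating the lower-order terms as right-hand side; the crucial point is that the normalization $\|v_n\|_{\mathcal{C}^1}=1$ puts $\nabla v_n$ and $v_n$ in $L^\infty$ with bound $1$, so $\langle B_{\epsilon_n},\nabla v_n\rangle + a_{\epsilon_n}v_n$ is uniformly in $L^p$ and the $L^p$ theory for lower-order coefficients is never needed. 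Then compactness ($W^{2,p}\hookrightarrow\hookrightarrow\mathcal{C}^1$) and weak convergence of the rescaled coefficients give a nontrivial limit solving a homogeneous limit problem, contradicting the uniqueness result \cite[Proposition 3.10]{K_S_2019}. Your approach instead keeps $\langle B_\epsilon,\nabla\cdot\rangle + a_\epsilon\cdot$ inside the operator and appeals directly to a quantitative $\mathcal{C}^{1,\tau}$ estimate for operators with merely $L^p$ ($p>N$) lower-order coefficients. This is sound, but — as you rightly flag — it hinges on the regularity constant in \cite[Theorem II-15.1]{L_U_1968} depending only on the $L^p$ norms of $B_\epsilon$, $a_\epsilon$; the paper's proof of Corollary \ref{SMP-corollary2-ch3} invokes that theorem only qualitatively, so you would need to verify the quantitative dependence, or run the bootstrap you sketch. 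The trade-off is therefore: the paper's compactness argument sidesteps the quantitative $L^p$-coefficient regularity question entirely (it only needs uniqueness for the limit problem), at the cost of being indirect; your direct argument is conceptually cleaner and constructive, but must supply that quantitative estimate (either by citing a precise form of the Ladyzhenskaya--Ural'tseva theorem or via the explicit bootstrap through $W^{2,q}$ spaces). Both work, and both lean on the same preparatory Lemma \ref{SMP-lemma4-ch3} for the uniform $L^p$ control of the rescaled coefficients and the same lifting $\varphi$ to reduce to homogeneous Dirichlet data.
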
 

\begin{proof}
We argue by contradiction. Assume the existence of sequences $\{\epsilon_n\} \subset [0,1/4]$ and $\{u_n\}$ solutions to \eqref{Pepsilon-ch3} with $\epsilon = \epsilon_n$ such that
\[ \|u_n\|_{\mathcal{C}^1(\overline{\omega})} \to + \infty, \quad \textup{ as }  n \to \infty.\]
Without loss of generality (up to a subsequence if necessary) we may assume that
\[ 1 \leq \|u_n\|_{\mathcal{C}^1(\overline{\omega})}, \quad \forall\ n \in \N.\]
We consider then $v_n:= \frac{u_n}{\|u_n\|_{\mathcal{C}^1(\overline{\omega})}}$ and  observe that $v_n$ solves
\begin{equation*} 
\left\{
\begin{aligned}
-\Delta v_n + \langle B_{\epsilon_n}(x), \nabla v_n \rangle + a_{\epsilon_n}(x)v_n & = 0, \quad & \textup{ in }  \omega,  \\
v_n & = 0, \quad & \textup{ on } \partial B_1(0),\\
v_n & = \frac{1}{\|v_n\|_{ \mathcal{C}^1(\overline{\omega}) }}, \quad & \textup{ on } \partial B_{1/2}(0).
\end{aligned}
\right.
\end{equation*}
Now, for all $n \in \N$, let us define 
\[ \xi_n = \frac{4}{3\|u_n\|_{ \mathcal{C}^1(\overline{\omega}) }} (1-|x|^2) \in \mathcal{C}^{\infty}(\RN) \quad \textup{ and } \quad  w_n = v_n - \xi_n,\]
and observe that $w_n$ solves
\begin{equation*}
\left\{
\begin{aligned}
-\Delta w_n & = - \langle B_{\epsilon_n}(x), \nabla v_n \rangle - a_{\epsilon_n} (x) v_n - \frac{8N}{3\|u_n\|_{\mathcal{C}^1(\overline{\omega})}}, \quad & \textup{ in } \omega, \\
w_n & = 0, & \textup{ on } \partial \omega.
\end{aligned}
\right.
\end{equation*}
Then, by \cite[Lemma 9.17]{G_T_2001_S_Ed}, there exists $C (\omega, N) > 0$ such that
\[ \|w_n\|_{W^{2,p}(\omega)} \leq C\, \left\| \langle B_{\epsilon_n}(x), \nabla v_n \rangle + a_{\epsilon_n} (x) v_n + \frac{8N}{3\|u_n\|_{\mathcal{C}^1(\overline{\omega})}} \right\|_{L^p(\omega)},\]
and so, since $\|v_n\|_{\mathcal{C}^1(\overline{\omega})} = 1$ for all $n  \in \N$, by Lemma \ref{SMP-lemma4-ch3}, there exists $C_1 = C_1 (\omega, N, \|B\|_{(L^p(\omega))^N},$ $ \|a\|_{L^p(\omega)}) > 0$ such that
\[ \|w_n\|_{W^{2,p}(\omega)} \leq C_1.\]
From the definition of $w_n$, we deduce the existence of $C_2 > 0$ (independent of $n$) such that 
\[ \|v_n\|_{W^{2,p}(\omega)} \leq C_2.\]
Since $p > N$, by the Sobolev compact embedding, we have that, up to a subsequence $v_n \to v$ in $\mathcal{C}^1(\overline{\omega})$ for some $v \in\mathcal{C}^1(\overline{\omega})$. Moreover, by  Lemma \ref{SMP-lemma4-ch3}, we have $\overline{a} \in L^p(\omega)$ (resp. $\overline{B} \in (L^p(\omega))^N$) with $\overline{a}\geq 0$ such that $a_{\epsilon_n} \rightharpoonup \overline{a}$ weakly in $L^p(\omega)$ (resp. $B_{\epsilon_n} \rightharpoonup \overline{B}$ weakly in $(L^p(\omega))^N$).
 This implies that $v$ is a weak solution to 
\begin{equation*}
\left\{
\begin{aligned}
-\Delta v + \langle \overline{B}(x), \nabla v \rangle + \overline{a}(x) v & = 0, \quad & \textup{ in } \omega,\\
v & = 0, & \textup{ on } \partial \omega.
\end{aligned}
\right.
\end{equation*}
By \cite[Proposition 3.10]{K_S_2019}, we deduce that $v \equiv 0$. This contradicts the fact that $v_n \to v$ in $\mathcal{C}^1(\overline{\omega})$ and the result follows.
\end{proof}

Having at hand all the needed ingredients, we prove the Hopf's Lemma with unbounded lower order terms.

\begin{theorem} \label{Hopf-ch3} \rm (\textbf{Hopf's Lemma}) \it
For $z \in \RN$ and $R > 0$, let $B \in (L^p(B_R(z))^N$ and $a \in L^p(B_R(z))$ for some $p > N$ such that $a \geq 0$. Let $x_0 \in \partial B_R(z)$ and let $u \in \mathcal{C}^{1}(\overline{B_R(z)})$ be an upper solution to
\begin{equation*} 
\left\{
\begin{aligned}
-\Delta u + \langle B(x), \gradu \rangle + a(x)u & = 0, \quad & \textup{ in } B_R(z),\\
u & = 0, \quad & \textup{ on } \partial B_R(z),
\end{aligned}
\right.
\end{equation*}
such that $u(x) > u(x_0) = 0$ for all $x \in B_R(z)$. Then $\frac{\partial u}{\partial \nu}(x_0) < 0$, where $\nu$ denotes the exterior unit normal.
\end{theorem}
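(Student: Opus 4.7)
The plan is to combine the rescaling apparatus of Lemmas \ref{SMP-lemma3-ch3}--\ref{SMP-lemma5-ch3} with a barrier comparison and the classical Hopf lemma for the Laplacian on the fixed annulus $\omega=B_1(0)\setminus\overline{B_{1/2}(0)}$. After an affine change of variable I would reduce to the case $z=0$, $R=1$, so that $x_0\in\partial B_1(0)$ and the outward unit normal there is $\nu=x_0$.

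For each $\epsilon\in(0,1/4]$ I would apply Corollary \ref{SMP-corollary2-ch3} with $\beta=B_\epsilon$ and $\xi=a_\epsilon$ (which sit in $L^p(\omega)$ with $\xi\geq 0$ by Lemma \ref{SMP-lemma4-ch3}) to produce $v_\epsilon\in\mathcal{C}^{1,\tau}(\overline\omega)$ with $0\leq v_\epsilon\leq 1$, $v_\epsilon=0$ on $\partial B_1(0)$ and $v_\epsilon=1$ on $\partial B_{1/2}(0)$. The uniform $\mathcal{C}^1$ bound from Lemma \ref{SMP-lemma5-ch3}, together with the $W^{2,p}$ estimate appearing in its proof, lets me pass to the limit $\epsilon\to 0$: since by Lemma \ref{SMP-lemma4-ch3} the rescaled coefficients satisfy $\|B_\epsilon\|_{L^p},\|a_\epsilon\|_{L^p}=O(\epsilon^{1-N/p})\to 0$ (using $p>N$), compactness of the embedding $W^{2,p}\hookrightarrow\mathcal{C}^1$ and uniqueness of the limit problem force $v_\epsilon\to v_0$ in $\mathcal{C}^1(\overline\omega)$, where $v_0$ is the unique classical harmonic function on $\omega$ with the prescribed boundary data.

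Since $v_0$ is smooth and radial, the classical Hopf lemma gives $\tfrac{\partial v_0}{\partial\nu}(x_0)=:-2\delta_0<0$, so by $\mathcal{C}^1$-convergence I can fix $\epsilon>0$ small enough that $\tfrac{\partial v_\epsilon}{\partial\nu}(x_0)\leq -\delta_0$. With this $\epsilon$ fixed, I would set $T(x):=\epsilon^{-1}(x-x_0)+x_0$ and $w:=v_\epsilon\circ T$ on the small annulus $\Sigma:=T^{-1}(\omega)=B_\epsilon((1-\epsilon)x_0)\setminus\overline{B_{\epsilon/2}((1-\epsilon)x_0)}$. A direct chain-rule computation (inverse to the scaling used in Lemma \ref{SMP-lemma5-ch3}) shows $w\in\mathcal{C}^{1,\tau}(\overline\Sigma)$ solves the \emph{original} equation $-\Delta w+\langle B,\nabla w\rangle+aw=0$ on $\Sigma$, with $w=0$ on the outer sphere and $w=1$ on the inner sphere. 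As $\Sigma$ is internally tangent to $B_1(0)$ at $x_0$, the two outward normals coincide there, and $\tfrac{\partial w}{\partial\nu}(x_0)=\epsilon^{-1}\tfrac{\partial v_\epsilon}{\partial\nu}(x_0)\leq -\delta_0/\epsilon<0$.

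To conclude, I would set $m:=\min_{\partial B_{\epsilon/2}((1-\epsilon)x_0)}u$, which is strictly positive because the inner sphere is a compact subset of $B_1(0)\setminus\{x_0\}$ where $u>0$ by hypothesis. Then $\varphi:=u-mw$ is an upper solution of the same linear equation on $\Sigma$ satisfying $\varphi\geq 0$ on $\partial\Sigma$ (namely $\varphi=u\geq 0$ on the outer sphere, with $\varphi(x_0)=0$, and $\varphi=u-m\geq 0$ on the inner sphere), so the comparison principle for operators with $a\geq 0$ (as in \cite[Proposition 3.10]{K_S_2019}) gives $\varphi\geq 0$ throughout $\Sigma$. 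Since $\varphi(x_0)=0$ and $\varphi\in\mathcal{C}^1(\overline\Sigma)$, computing the outward normal derivative at $x_0$ yields
\[ \frac{\partial u}{\partial\nu}(x_0)\leq m\,\frac{\partial w}{\partial\nu}(x_0)\leq -\frac{m\delta_0}{\epsilon}<0, \]
as required. The main obstacle is the $\mathcal{C}^1$ passage to the limit $v_\epsilon\to v_0$ in the second paragraph: it is the mechanism by which the classical Hopf estimate for the Laplace equation is transplanted to the full operator with $L^p$ lower-order coefficients, and it rests essentially on the combination of the smallness of the rescaled coefficients (Lemma \ref{SMP-lemma4-ch3}) with the uniform $W^{2,p}$ bound hidden inside Lemma \ref{SMP-lemma5-ch3}.
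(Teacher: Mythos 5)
Your proposal is correct and follows essentially the same strategy as the paper: construct a barrier on the fixed annulus $\omega = B_1(0)\setminus\overline{B_{1/2}(0)}$, use the rescaled coefficients $B_\epsilon, a_\epsilon$ and the smallness $\|B_\epsilon\|_{L^p},\|a_\epsilon\|_{L^p}=O(\epsilon^{1-N/p})$ from Lemma \ref{SMP-lemma4-ch3} together with the uniform bound of Lemma \ref{SMP-lemma5-ch3} to show the barrier's outer normal derivative is strictly negative for small $\epsilon$, and then compare with $u$. The only cosmetic differences are that the paper proves the $\mathcal{C}^1$-closeness $\|\varphi_\epsilon-\varphi\|_{\mathcal{C}^{1,\tau}}\to 0$ by a direct quantitative $W^{2,p}$ estimate rather than by compactness plus uniqueness of the limit problem, and that the paper pushes $u$ forward to $\omega$ (via $u_\epsilon(y)=u(\epsilon(y-x_0)+x_0)$) for the final comparison rather than pulling the barrier back to the small tangent annulus $\Sigma$ as you do.
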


\begin{proof} 
As it is well known, by the change of variable $y = T(x) = \frac{1}{R}(x-z)$, there is no loss of generality to consider the problem on $B_1(0)$ i.e. to assume that  $x_0 \in \partial B_1(0)$ and that  $u \in \mathcal{C}^{1}(\overline{B_1(0)})$ is an upper solution to
\begin{equation} \label{Hopf-equation-ch3}
\left\{
\begin{aligned}
-\Delta u + \langle B(x), \gradu \rangle + a(x)u & = 0, \quad & \textup{ in } B_1(0),\\
u & = 0, \quad & \textup{ on } \partial B_1(0).
\end{aligned}
\right.
\end{equation}

Let us fix $\omega:= B_1(0) \setminus \overline{B_{1/2}(0)}$ and split the proof into several steps:

\medbreak
\noindent \textbf{Step 1:} \textit{Auxiliary regular barrier $\varphi$}
\medbreak
Let us consider the problem
\begin{equation} \label{hl1-1-ch3}
\left\{
\begin{aligned}
-\Delta \varphi & = 0, \quad & \textup{ in } \omega,\\
\varphi & = 0, \quad & \textup{ on } \partial B_1(0),\\
\varphi & = 1, \quad & \textup{ on } \partial B_{1/2}(0).
\end{aligned}
\right.
\end{equation}
By \cite[Theorem 6.14]{G_T_2001_S_Ed} we know that there exists $\varphi \in \mathcal{C}^{2,\tau}(\overline{\omega})$ for some $\tau > 0$ solution to \eqref{hl1-1-ch3}. Moreover, by \cite[Lemma 3.4 and Theorem 3.5]{G_T_2001_S_Ed}, we know that
\begin{equation}
\label{eq A13}
0 < \varphi(x) < 1, \quad \forall\ x \in \omega, \quad \textup{ and } \quad \frac{\partial \varphi}{\partial \nu} (x_0) < 0.
\end{equation}
\medbreak
\noindent \textbf{Step 2:} \textit{Let $M > 0$ given by Lemma \ref{SMP-lemma5-ch3}. For every $\epsilon \in (0,1/4)$ there exists $\varphi_{\epsilon} \in \mathcal{C}^{1,\tau}(\overline{\omega})$ for some $\tau > 0$ solution to \eqref{Pepsilon-ch3} such that $\|\varphi_{\epsilon}\|_{\mathcal{C}^1(\overline{\omega})} \leq M.$}
\medbreak
The existence follows from Corollary \ref{SMP-corollary2-ch3} and the uniform bound from Lemma \ref{SMP-lemma5-ch3}.

\medbreak
\noindent \textbf{Step 3:} \textit{Let $\varphi_{\epsilon}$ the solution to \eqref{Pepsilon-ch3} given by Step 2. There exists $\overline{\epsilon} \in (0,1/4)$ such that, for all $\epsilon \in (0, \overline{\epsilon})$, it follows that $\frac{\partial \varphi_{\epsilon}}{\partial \nu} (x_0) < 0$.}
\medbreak
Let us define $\psi_{\epsilon} := \varphi_{\epsilon} - \varphi$ and observe that $\psi_{\epsilon} \in \mathcal{C}^{1,\tau}(\overline{\omega})$ for some $\tau > 0$  solves
\begin{equation*}
\left\{
\begin{aligned}
-\Delta \psi_{\epsilon} & = - \langle B_{\epsilon}(x), \nabla \varphi_{\epsilon} \rangle - a_{\epsilon}(x) \varphi_{\epsilon}, \quad & \textup{ in } \omega,\\
\psi_{\epsilon} & = 0, & \textup{ on } \partial\omega.
\end{aligned}
\right.
\end{equation*}
Then, by \cite[Lemma 9.17]{G_T_2001_S_Ed} and Lemma \ref{SMP-lemma4-ch3}, there exists $C = C(\omega, N)> 0$ such that
\begin{equation*}
\begin{aligned}
\|\psi_{\epsilon}\|_{W^{2,p}(\omega)} &\leq C \left\|  \langle B_{\epsilon}(x), \nabla \varphi_{\epsilon} \rangle + a_{\epsilon}(x) \varphi_{\epsilon} \right\|_{L^p(\omega)} \\
& \leq C \|\varphi_{\epsilon}\|_{\mathcal{C}^{1}(\overline{\omega})} \epsilon^{1-\frac{N}{p}} \left( \sum_{i=1}^N \|B^{i}\|_{L^p(\omega)} + \epsilon \|a\|_{L^p(\omega)} \right) \\
& \leq \epsilon^{1-\frac{N}{p}} C M \left( \sum_{i=1}^N \|B^{i}\|_{L^p(\omega)} +  \|a\|_{L^p(\omega)} \right) =: \epsilon^{1-\frac{N}{p}} C_2
\end{aligned}
\end{equation*}
for some $C_2$ independent of $\epsilon$. Hence, by the Sobolev  embedding, there exists $C_3 > 0$ independent of $\epsilon$ such that
\[ \|\psi_{\epsilon}\|_{\mathcal{C}^{1,\tau}(\overline{\omega})} \leq  \epsilon^{1-\frac{N}{p}} C_3.\]
We conclude that
\[ \lim_{\epsilon \to 0} \left| \frac{\partial \varphi_{\epsilon}}{\partial \nu}(x_0) - \frac{\partial \varphi}{\partial \nu}(x_0) \right| \leq \lim_{\epsilon \to 0} \|\psi_{\epsilon}\|_{\mathcal{C}^{1,\tau}(\overline{\omega})} = 0,\]
and the Step 3 follows by \eqref{eq A13}.

\medbreak
\noindent \textbf{Step 4:} \textit{Conclusion}
\medbreak

Let $u \in \mathcal{C}^1(\overline{B_1(0))}$ be an upper solution to \eqref{Hopf-equation-ch3} such that $u(x) > u(x_0) = 0$ for all $x \in B_1(0)$. We fix $\epsilon > 0$ small enough to ensure that the Step 3 holds and define 
\[u_{\epsilon}(y) = u( \epsilon(y-x_0) + x_0).\]
Since we know that $\omega \subset T(\omega)$ by Lemma \ref{SMP-lemma3-ch3}, we have that $u_{\epsilon}$ is an upper solution to
\begin{equation*}
\left\{
\begin{aligned}
-\Delta u_{\epsilon} + \langle B_{\epsilon}(x), \nabla u_{\epsilon} \rangle + a_{\epsilon}(x) u_{\epsilon} & = 0, \quad & \textup{ in } \omega, \\
u_{\epsilon} & = 0, & \textup{ on } \partial \omega.
\end{aligned}
\right.
\end{equation*}
Then, we define $\overline{u}_{\epsilon} = u_{\epsilon} - \theta_{\epsilon} \varphi_{\epsilon}$ with 
\[ \theta_{\epsilon} = \inf_{\partial B_{1/2}(0)} u_{\epsilon} > 0,\]
and we have that $\overline{u}_{\epsilon}$ is an upper solution to
\begin{equation*}
\left\{
\begin{aligned}
-\Delta \overline{u}_{\epsilon} + \langle B_{\epsilon}(x), \nabla \overline{u}_{\epsilon} \rangle + a_{\epsilon}(x) \overline{u}_{\epsilon} & = 0, \quad & \textup{ in } \omega, \\
\overline{u}_{\epsilon} & = 0, & \textup{ on } \partial \omega.
\end{aligned}
\right.
\end{equation*}
Applying then \cite[Proposition 3.10]{K_S_2019} we deduce that $u_{\epsilon} - \theta_{\epsilon} \varphi_{\epsilon} \geq 0,$ in $\overline{\omega},$ and so, by Step 3, we conclude that
\[ \frac{\partial u }{\partial \nu}(x_0) = \frac{1}{\epsilon} \frac{\partial u_{\epsilon}}{\partial \nu} (x_0) \leq \frac{\theta_{ \epsilon}}{\epsilon} \frac{\partial \varphi_{\epsilon}}{\partial \nu}(x_0) < 0,\]
as desired.
\end{proof}

%
%

\begin{proof}[\textbf{Proof of Theorem \ref{SMP-ch3}}]
The result follows from Theorem \ref{Hopf-ch3}
arguing as in \cite[Theorem 3.27]{T_1987}. See also \cite[Theorem 3.5]{G_T_2001_S_Ed}.
\end{proof}

\bibliographystyle{plain}
\bibliography{Bibliography}

\end{document}